\newcommand\blfootnote[1]{%
  \begingroup
  \renewcommand\thefootnote{}\footnote{#1}%
  \addtocounter{footnote}{-1}%
  \endgroup
}
\newtheorem{thm}{Theorem}[section]
 \newtheorem{cor}[thm]{Corollary}
 \newtheorem{lem}[thm]{Lemma}
 \newtheorem{prop}[thm]{Proposition}
 \theoremstyle{definition}
 \newtheorem{defn}{Definition}[section]
 \theoremstyle{remark}
 \newtheorem{rem}{Remark}[section]
 \newtheorem{ex}[thm]{Example}
 \numberwithin{equation}{section}
\let\emptyset\varnothing
\DeclareMathOperator{\im}{Im}
\DeclareMathOperator{\re}{Re}
\DeclareMathOperator{\dom}{dom}
\DeclareMathOperator{\cone}{Cone}
\DeclareMathOperator{\Bd}{bd}
\DeclareMathOperator{\diam}{diam}
\def\RR{\mathbb R}
\def\CC{\mathbb C}
\def\NN{\mathbb N}
\def\ZZ{\mathbb Z}
\def\DD{\mathbb D}
\def\SSS{\mathbb{S}}
\def\al{\alpha}
\def\la{\lambda}
\def\de{\delta}
\def\ep{\epsilon}
\def\ga{\gamma}
\def\Ga{\Gamma}
\def\pa{\partial}
\def\vka{\varkappa}
\def\Om{\Omega}
\def\De{\Delta}
\def\Si{\Sigma}
\def\L{\mathcal{L}}
\def\D{D}
\def\W{\mathcal{W}}
\def\B{\mathcal{B}}
\def\fr{f}
\def\dr{\mathfrak{r}}
\def\A{\mathbb{A}}
\def\F{\mathbb{F}}
\def\GaWidth{\varepsilon}
\def\ii{\mathrm{i}}
\def\ssdots{\scriptscriptstyle{\dots}}
\def\wt{\widetilde}
\def\res{\mathrm{res\;}}
\begin{document}
\title{Resonance free regions and non-Hermitian spectral optimization for Schrödinger point interactions}
\author{}
\date{}%\today}
\maketitle

\vspace{-8ex}

{\center 
{\large 
Sergio Albeverio and Illya M. Karabash 
\\[0mm]
}}

\blfootnote{\noindent\textbf{Acknowledgments.}
The authors are grateful to the anonymous referee for  careful reading of the paper and 
very helpful and stimulating remarks about exponential polynomials and quantum graphs. The authors 
are also  thankful to Gianfausto Dell'Antonio and Alessandro Michelangeli for the invitation to 
the 2nd workshop ''Mathematical Challenges of Zero-Range Physics'' (SISSA, Trieste), 
where some questions connected with this research were discussed with specialists. 
The second author (IK) is grateful to Richard Froese, who has introduced him into the area of resonance optimization, and to Herbert Koch, Daniel Peterseim, and Holger Rauhut for the hospitality of the University of Bonn.
Both authors are also thankful to Richard Froese for pointing out the recent thesis of Kai Ogasawara \cite{KO14_Th} 
written under his supervision. 
   
During various parts of this research, IK was supported by 
the Alexander von Humboldt Foundation, the Hausdorff Research Institute for Mathematics of the University of Bonn, and 
the  WTZ grant 100320049 ''Mathematical Models for Bio-Medical Problems'' jointly sponsored by BMBF (Germany) 
and MES (Ukraine). The participation of IK in the summer school ''Modeling, Analysis, and Approximation Theory toward applications in tomography and inverse problems'' (L{\"u}beck, July 31 - August 4, 2017) with a series of lectures based on these results    was supported by VolkswagenStiftung.
}

\begin{abstract}
Resonances of Schrödinger Hamiltonians with point interactions  
are considered. The main object under the study is the resonance free 
region  under the assumption that 
the centers, where the point interactions are located, are known and the associated 
`strength' parameters are unknown and allowed to bear additional dissipative effects. 
To this end we consider the boundary of the resonance free region 
as a Pareto optimal frontier and study 
the corresponding optimization problem for resonances. It is shown that upper logarithmic bound 
on resonances can be made uniform with respect to the strength parameters.
The necessary conditions on optimality are obtained in terms of first principal minors  
of the characteristic determinant. We demonstrate the applicability of these optimality 
conditions on the case of 4 equidistant centers  by computing explicitly 
the resonances of minimal decay for all frequencies.  
This example shows that a resonance of minimal decay is not necessarily simple, 
and in some cases it is generated by an infinite family of feasible resonators.
\end{abstract}

{
\small \noindent
MSC-classes: 
35J10, %   	Schrödinger operator
35B34, % Partial differential equations, Qualitative properties of solutions, Resonances
35P15, %  	Estimation of eigenvalues, upper and lower bounds
49R05, %Variational methods for eigenvalues of operators
90C29, %   	Multi-objective and goal programming
47B44, % Accretive operators, dissipative operators, etc.
%70J10,  % Mechanics of particles and systems: Linear vibration theory: Modal analysis
%47J10 % Nonlinear spectral theory, nonlinear eigenvalue problems 
%34L15 % Ordinary differential equations: Eigenvalues, estimation of eigenvalues, upper and lower bounds
%49M05, % Numerical methods: 	Methods based on necessary conditions 
%46N10, % Functional analysis: Miscellaneous applications of functional analysis: 
%  	Applications in optimization, convex analysis, mathematical programming, economics
%90C29, %   	Multi-objective and goal programming
%47A55 %,  Operator theory:   	Perturbation theory
%78M50,  % Optics, electromagnetic theory: Optimization
\\[2mm]
Keywords: Exponential polynomial, Pareto optimal design, high-Q cavity, quasi-normal eigenvalue, scattering pole,  delta-interaction,   zero-range 

\tableofcontents
}

\section{Introduction}
\label{s:intro}

\subsection{Statement of problem, motivation, and related studies}
\label{subs:Statement}

In the present paper, we study resonance free regions and extremal resonances of 
`one particle, finitely many centers Hamiltonian' $H_\al=-\De_{\al,Y}$ 
associated with the formal expression 
$
-\De u (x) + \sum_{j=1}^N \mu (\al_j) \de (x - y_j) u (x) , \quad x \in \RR^3 , \ N \in \NN,
$, where $\De$ is the self-adjoint Laplacian acting in the complex Lebesgue space $\L^2 (\RR^3)$, $\delta (\cdot - y_j)$ is the Dirac measure at $y_j \in \RR^3$, $\mu(\al_j)$ is a complex-valued function of the strength parameter $\al_j$, $j=1,\dots,N$ 
(see \cite{AFH79,AGH82,AGHH12,AK00} and Section \ref{s:Def} for basic definitions).
The question of optimization of the principal eigenvalue of self-adjoint Schrödinger Hamiltonians with $\delta$-type or point interactions  attracted recently considerable attention especially in a quantum mechanics context \cite{E05,EHL06,EFH07,EL16,L16}. 
This line of research was motivated by the isoperimetric problem posed in \cite{E05}. 

In comparison with variational problems  involving eigenvalues of self-adjoint operators, the resonance spectral problem describes the dissipation of energy 
to the outer medium and so it is of a non-Hermitian type. 
The facts that resonances move under perturbations in two-dimensions of the complex plane 
and that degenerate (multiple) resonances can split in non-differentiable branches  lead to essentially new difficulties and effects  
for the application of variational techniques  \cite{HS86,S87,CZ95,CO96,KS08,Ka13,Ka14,KLV17}. 
In particular, the problem of optimization of an individual  resonance 
takes the flavor of Pareto optimization if one considers it as an $\RR^2$-valued objective  function and the boundary of the resonance free region as a Pareto frontier \cite{Ka14,KLV17}.
Numerical optimization of 1-D resonances produced by point interactions were initiated recently in \cite{KO14_Th}.

Estimates on poles of scattering matrices and resonances have being   
studied in Mathematical Physics at least since the Lax-Phillips 
upper logarithmic bound on resonances' imaginary parts \cite{LP71} and constitute an active area of research \cite{DZ16,F97,Z89}.
Optimization of resonances may be seen as an attempt to obtain 
sharp estimates on resonance free regions. 
This point of view and the study of resonances associated with 
random Schrödinger operators were initial sources of 
the interest in this problem \cite{H82,HS86,S87}. 

The present growth of interest in numerical 
\cite{GCh13,HBKW08,KS08,MPBKLR13,OW13} 
and analytical \cite{Ka13,Ka14,KLV17} 
aspects of resonance optimization 
is stimulated by a number of optical engineering studies of resonators with high quality factor (high-Q cavities), see  \cite{DMTSH14,LJ13,MPBKLR13,NKT08} and references therein.

In this paper, we assume that the tuple of centers $Y = (y_j)_1^N \in (\RR^3)^N$ (locations of the $\delta$-interactions) is fixed and known, but the N-tuple $\al= (\al_j)_1^N$ of scalar free 'strength' parameters $\al_j$ of point interactions is unknown. 
The associated point interactions Hamiltonians $H_\al=-\De_{\al,Y}$ can be defined in several ways as densely defined closed operators in the Hilbert space $\L^2 (\RR^3)$ \cite{AGHH12,AH84,G84}, in particular, via a Krein-type  formula for 
the difference 
of the perturbed and unperturbed resolvents of operators $H_\al$ and $-\De$, respectively. 
Eigenvalues and (continuation) resonances $k$ of the corresponding operator $H_\al$ are connected with the special $N\times N$-matrix function $\Ga_{\al,Y} (z)  $ which appears naturally as a part of the expression for $(-\De_{\al,Y} - z^2 )^{-1} - (-\De - z^2 )^{-1} $,
see Section \ref{s:Def}.
If one denotes by $\Si (\al,Y)$ the set of zeroes of 
$
\det \Ga_{\al,Y} (\cdot) 
$, 
then the set $\Si_\res (\al,Y)$ of resonances $k$ associated with $H_\al$ 
can be defined by 
\begin{equation} \label{e:SiRes}
\Si_\res (\al,Y) := \Si (\al,Y) \cap (\CC_- \cup \RR)  , \quad \text{see \cite{AGHH12,AH84}},
\end{equation}
where $\CC_-$ is the lower half of the complex plane.

The functions $ \det \Ga_{\al,Y} (\cdot) $ take the form of exponential polynomials, for those there exists a 
well-developed theory with a number of applications in Analysis and 
connections to the studies of the Riemann zeta function \cite{BG12,L31,MSV13}.  
P\'{o}lya's results on positions and distribution of zeros of exponential polynomials 
were refined and generalized in many works leading, in particular, to the P\'{o}lya-Dickson theorem \cite{BG12}. 
This theorem implies, for example, that the imaginary parts of resonances of $H_\al$ satisfy upper 
and lower logarithmic  bounds (see Lemma \ref{l:asDetGa} and  (\ref{e:beLogDet}) below), 
in this way establishing and strengthening for point interactions the Lax-Phillips result \cite{LP71}. 
From this point of view, the present work can be seen as an attempt to obtain  more refined bounds 
on zeros of special exponential polynomials employing Pareto optimization techniques of \cite{Ka13,Ka14,KLV17}.

While our main  goal is to consider the resonance free regions in the case where the $\al_j $ run through the compactification
$\overline{\RR} := \RR \cup \{ \infty \}$ of the real line, our technique also leads us to the study of `dissipative point interactions' corresponding to the case $\al_j \in \overline{\CC}_-:= \CC_- \cup \RR \cup \{\infty\}$. 
It is not difficult to see  (see Section \ref{s:Def}) that the corresponding  operators $H_\al$ are well-defined, closed, and maximal dissipative 
in the sense that the $\ii H_\al$ are maximal accretive (i.e., $\re (\ii H_\al u,u) \ge 0$ 
for all $u$ in the domain $\dom H_\al$ of $H_\al$ and $(\ii H_\al + \la) \dom H_\al = \L^2 (\RR^3) $ for $\la >0$). 
% in the sense of \cite{E12} (or in the sense that $\ii H_\al$ are maximal accretive)
So $H_\al$ can be considered as  pseudo-Hamiltonians in the terminology of \cite{E12}. 
Following the logic of the resolvent continuation 
%\cite[Section XII.6]{RSIV78} 
it is natural to extend the definition of resonances given by  formula (\ref{e:SiRes}) to the case $\al \in (\overline{\CC}_-)^N $.

Assuming that each of the parameters $\al_j$, $j=1, \dots, N$, is allowed to run 
through some  set $\A \subset \overline{\CC}_-$ we consider the associated operators $H_\al$ as feasible points (see \cite{BV04} for basic notions of the optimization of vector-valued objective functions) and denote the associated feasible set of operators  by 
$\F_\A$. The resonance free region for the family $\F_\A$ is defined as  $\CC \setminus \Si_\res [\F_\A]$
%\begin{equation*} \text{
where $\Si_\res [\F_\A] := \bigcup_{H_\al \in \F_\A} \Si_\res (\al,Y) $ is the \emph{set of achievable resonances}.
%} \end{equation*}

\subsection{Main results and some examples}

The main results of the present paper are:
\begin{itemize}
\item  It is shown in Theorem \ref{t:UniLogBound} that upper logarithmic bounds on imaginary parts of 
resonances can be modified to become uniform estimates over $\F_{\overline{\CC}_-}$ and $\F_{\overline{\RR}}$.

\item To achieve more detailed results on the resonance free region, we employ the Pareto optimization approach  and consider 
Hamiltonians $H_\al \in \F_\A $ that produce resonances on the boundary $\Bd \Si_\res [\F_\A]$ of the set of achievable resonances. When the set $\A$ of feasible strength parameters $\al_j$, $j=1$, \dots, $N$, is closed in the topology of the compactification $\overline{\CC}_-$, such extremal feasible operators $H_\al$ do exist
since the set $\Si_\res [\F_\A]$ is closed (see Theorem \ref{t:SAclosed}).
The function of minimal decay rate $\dr(\cdot)$ \cite{Ka13} provides a convenient way to describe  the part of $\Bd \Si_\res [\F_\A]$ closest to $\RR$ (see Definition \ref{d:MinDec} and the discussions in Section \ref{s:AddRem}).  The associated extremal resonances $k$ and operators $H_\al$ are said to be of minimal decay for their particular frequencies $\fr = \re k$.

\item In Section 6 we obtain various necessary conditions on $H_\al$ to be extremal over $\F_{\overline{\RR}}$ and $\F_{\overline{\CC}}$ in terms of first minors of a regularized version of $\det \Ga_{\al,Y}$. This is done with the use of the multi-parameter perturbations technique of \cite{Ka14}.

\item The effectiveness of the conditions of Section 6  can be seen in  the equidistant cases when $|y_j - y_{j'}|=L$ for all $j \neq j'$. Namely, we provide an explicit calculation of resonances of minimal decay and associated tuples $\al$ for the case where
$\{ y_j\}_1^4$ constitute the vertices of a regular tetrahedron (see Section \ref{s:Tetrahedron}). 
\end{itemize}

In the process of deriving the above results, we obtained several examples that are of independent interest since they address the questions arising often in the study of resonances and their optimization.

Namely, it occurs in the case of vertices of a regular tetrahedron that the optimal $\al$ does not always consist of equal $\al_j$ and that, for some of resonances of minimal decay, there exists an infinite family of optimizers $H_\al$ preserving only one of the symmetries (see the discussion in Section \ref{s:AddRem}). This gives a negative answer to the multidimensional part of 
the  question of uniqueness of optimizers for a given $\re k$, which was posed in \cite[Section 8]{Ka14} (see also \cite{HS86,KLV17_Im}).

The assumption that a resonance $k$ is of multiplicity 1 essentially simplifies its perturbation theory (see (\ref{e:k_asy})), 
and therefore this assumption is often explicitly or implicitly used in intuitive arguments. 
While it is known that generic  resonances are simple \cite{DZ16} (i.e., of multiplicity 1), 
there are no reasons to assume that resonances of minimal decay are generic. 
Example \ref{ex:Tetrahedron} describes $ H_\al \in \F_{\overline{\RR}}$ that produce  resonances of minimal decay 
with  multiplicity $\ge 2$. 

Nonzero resonances on the real line  are often assumed to be connected with eigenvalues embedded into the essential spectrum. 
Remark  \ref{r:RealRes} provides a very simple example of a dissipative Schrödinger Hamiltonian that generates 
a resonance $k$ in $\RR_-$, but has no embedded eigenvalue at $k^2$.

\textbf{Notation}. 
The following standard sets are used: the lower ($-$) and upper ($+$) complex half-planes 
$\CC_\pm = \{ z : \pm \im z > 0  \}$,
$\CC_{\mathrm{I}}$, $\CC_{\mathrm{II}}$, $\CC_{\mathrm{III}}$, and $\CC_{\mathrm{IV}}$ are the open quadrants in $\CC$
 corresponding  to the combinations of signs $(+,+)$, $(-,+)$, $(-,-)$, and $(+,-)$ for $(\re z,\im z)$, 
%the closed interval $\I = [0,\ell]$, 
%the set $\ZZ$ of integers, 
open half-lines $\RR_\pm = \{ x \in \RR: \pm x >0 \}$, 
%open half-planes $\CC_\pm = \{ z \in \CC : \pm \im z >0 \}$,
open discs $\DD_\epsilon (\zeta) := \{z \in \CC : | z - \zeta | < \epsilon \}$,
%with the center at $\zeta$ and radius $\epsilon$.
%open balls 
%$
%\BB_\epsilon (u_0) %=\BB_\epsilon (u_0; U) 
%:= \{u \in U \, : \, \| u - u_0 \|_U < \epsilon \}
%$
%in a normed space $U$, 
%the closure  $\Cl S$ of a set $S \subset U$ (w.r.t. the norm topology of $U$), 
and the boundary $\Bd S$ of a subset $S$ of a normed space $U$.
%Let $S$ be a subset of a linear space $U$ over $\CC$.
For $u_0 \in U$ and $z \in \CC$, we write 
$
z S  + u_0 := \{ zu + u_0 \, : \, u \in S \}
$. 
The convex cone generated by $S$ (all nonnegative linear combinations of elements of $S$) is denoted by $\cone S$.
If a certain map $g$ is defined on $S$, $g[S]$ is its image (when it is convenient, we write 
without brackets, e.g. $\re S$ for $S \subset \CC$.) 
The diameter of $S$ is $\diam (S) := \sup_{u_{0,1} \in S } \|u_0 - u_1\|_U$.
%The convex hull of $S$ and is denoted by $\conv S$.
%The smallest integer not less than $x$ and the greatest integer not greater than $x$
%are denoted by $ \lceil x \rceil $ and $\lfloor x \rfloor$, respectively.
%The function $f_+$ is `the positive part' of f, i.e., 
%$ f_+ (x)=  f(x) $ if $ f (x) \ge 0 $ and $f_+ (x) = 0 $  if  $f (x) < 0$;
%$ \chi_S $ is an indicator function of a set $S $, i.e., 
%$\chi_S (x ) = 1 $ if $ x \in S $ and $\chi_S (x) = 0 $ if $ x \not \in S$. 
%For a function $g$ defined on $S$, $g[S]$ is the image of $S$.
By  $\pa_z f $, $\pa_{\al_j} f$, etc., we denote (ordinary or partial)
derivatives  with respect to (w.r.t.) $z$, $\al_j$, etc.;
$\deg p$ stands for the degree of a polynomial $p$ of one or several variables.
%The following standard Banach spaces are used: $L_{\CC(\RR)}^p (a_1,a_2) $ are the Lebesgue spaces of complex- (resp., real-) 
%valued functions.
%, and
%$ W^{k,p} [a_1,a_2] := \{ y \in L_\CC^p (a_1,a_2) \ : \ 
%\pa_x^j y \in L_\CC^p (a_1,a_2), \  1 \leq j \leq k \} $
%are Sobolev spaces. The corresponding standard norms are denoted by $\| \cdot \|_p$ and 
%$\| \cdot \|_{W^{k,p}}$. 
%The space of continuous
%complex-valued functions with the uniform norm is denoted by $C
%[a_1,a_2]$. 
%Under the support $\supp B$ of $B \in L^1 (a_1,a_2)$ we understood 
%the topological support of the absolutely continuous measure $|B(x)|\dd x$ on $[a_1,a_2]$, i.e., $\supp B$ is 
%the smallest closed set $S$ such that $B (x) = 0 $ a.e. 
%on $[a_1,a_2] \setminus S$.
%The Lebesgue measure is denoted by $\meas S$.
%We write $z_1^{[n]} \asymp z_2^{[n]}$ as $n \to \infty$ if the sequences
%$z_1^{[n]} / z_2^{[n]} $ and $z_2^{[n]} / z_1^{[n]}$ are bounded for $n$ 
%large enough.
%Sometimes, the complex plane $\CC$ is considered as a real linear space $\RR^2$ with scalar product
%$\lba \cdot , \cdot \rba_\CC$,
%\begin{equation} \label{e <>C}
%\lba z_1 , z_2 \rba_\CC := \re z_1 \re z_2 + \im z_1 \im z_2  , \ \ \ z_{1,2} \in \CC .
%\end{equation}
%The signum function is defined as $\sgn x := \chi_{\RR_+} (x) - \chi_{\RR_-} (x)$.
%We use the convention that the sum equals zero if the lower index exceeds
%the upper index.

\section{Nonconservative point interactions}
\label{s:Def}

Let us fix a set $Y=\{ y_j \}_{j =1}^N$ consisting of $N$ distinct points 
$y_1$, \dots, $y_N$ in $\RR^3$.
For every tuple $\al = (\al_j)_{j=1}^N \in \RR^N$, there exists %\cite{AFH79,AGH82,AGHH12}
the self-adjoint Hamiltonian $H_\al= -\De_{\al,Y}$ in $\L^2 (\RR^3)$ with point interactions at the centers $y_j$ that has  for all 
$z \in \CC_I$
the resolvent $(-\De_{\al,Y}-z^2)^{-1}$ with the integral kernel 
\begin{gather} \label{e:Res}
(-\De_{\al,Y}-z^2)^{-1} (x,x')  = G_z (x-x') + \sum_{j,j' = 1}^N G_z (x-y_j)
\left[ \Ga_{\al,Y} \right]_{j,j'}^{-1} G_z (x' - y_{j'} ) , 
%\\ \notag
\end{gather}
where $x,x' \in \RR^3 \setminus Y$ and $x \neq x'  $, see \cite[Section II.1.1]{AGHH12}.
Here $G_z (x-x') := \frac{e^{\ii z |x-x'|}}{4 \pi |x-x'|}$ is the integral kernel associated the resolvent $(-\De - z^2)^{-1}$ of 
the kinetic energy Hamiltonian $-\De$,
and $\left[ \Ga_{\al,Y} \right]_{j,j'}^{-1}$ denotes the $j,j'$-element of the inverse to 
the matrix 
\begin{gather*} %\label{e:Ga}
\Ga_{\al,Y} (z) = \left[ \left( \al_j - \tfrac{\ii z}{4 \pi} \right) \de_{jj'} 
- \wt G_z  (y_j-y_{j'})\right]_{j,j'=1}^{N} \text{ with }
\wt G_z (x) := \left\{ \begin{array}{rr} G_z (x), & %\text{ if } 
x \neq 0 \\
0 , & 
%\text{ if } 
x = 0  \end{array} \right.  .
\end{gather*}
%where $\wt G_z (x) = G_z (x) \text{ if } x \neq 0 , \text{ and } \wt G_z (x) = 0 \text{ if } x = 0 $.

In the case of one center ($N=1$) and $\al_1 \in \CC$, the above definition 
leads to the m-accretive operator $\ii H_{\al_1}$ when $\al_1 \in \CC_-$, and the m-accretive operator $(-\ii) H_{\al_1}$ when $\al_1 \in \CC_+$ (see \cite{AGHS83} and \cite[Sections I.1.1 and I.2.1]{AGHH12}).

%When $\al \in \RR^N $, the proof of existence of an unbounded self-adjoint operator %$-\De_{\al,Y}$ 
%in $\L^2 (\RR^3)$
%with the resolvent given  by (\ref{e:Res}) can be found in \cite[Section II.1.1]{AGHH12}. 
%The class of such operators generated by all possible $N \in \ZZ^+$, $\al \in \RR^N$, and finite $Y$ will be denoted 
%$\HH_\RR$. 

%Proposition \ref{p:Res}, which is given below and is proved by modification of the arguments of 
%\cite[Section II.1.1]{AGHH12}, defines $H_\al=-\De_{\al,Y}$ for all $\al  \in \CC^N$.

The aim of this section is to extend  the above definition to all tuples $\alpha \in \CC^N $.
Later we will use the case $\al  \in (\CC_-  \cup \RR)^N $ as a technical tool for optimization of resonances over $\al \in \RR^N$. 

Here and below $\deg p$ is the degree of the polynomial $p$ of one or several variables and $\diam (Y) := \max_{1 \le j,j'\le N} |y_j - y_{j'}|$ is the diameter of $Y$.

As it was pointed out to us by the referee, the following lemma could be obtained from the theory of  zeroes of exponential polynomials \cite{L31,BG12} 
which goes back to P\'{o}lya.
We provide here a short self-contained proof that while not using the general theory, shows how one of P\'{o}lya's arguments works.

\begin{lem} \label{l:asDetGa}
For every $\al \in \CC^N$, there exist $c_{i,j}= c_{i,j} (\al,Y)>0$, $i,j=1,2$,   such that
all zeros $k$ of $\det \Ga_{\al,Y} (\cdot)$ satisfy 
\begin{equation}
- c_{2,1} \ln (|\re k|+1) - c_{2,2} \le \im k \le - c_{1,1} \ln (|\re k|+1) + c_{1,2} . \label{e:beLog}
\end{equation}
\end{lem}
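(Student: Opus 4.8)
The plan is to analyze the structure of $\det\Ga_{\al,Y}(k)$ as an exponential polynomial in $k$ and extract its dominant asymptotic behavior in the two regimes $\im k \to +\infty$ and $\im k \to -\infty$. Writing out the determinant, each entry of $\Ga_{\al,Y}(k)$ is either a diagonal term $\al_j - \tfrac{\ii k}{4\pi}$ (linear in $k$) or an off-diagonal term $-\wt G_k(y_j-y_{j'}) = -\tfrac{e^{\ii k|y_j-y_{j'}|}}{4\pi|y_j-y_{j'}|}$ (a pure exponential $e^{\ii k d_{jj'}}$ with $d_{jj'}=|y_j-y_{j'}|>0$). Expanding the determinant via the Leibniz/permutation formula, one obtains a finite sum of terms, each of which is a product of some linear-in-$k$ diagonal factors and some exponentials; thus $\det\Ga_{\al,Y}(k)$ is a finite linear combination of functions of the form $q(k)\,e^{\ii k \lambda}$ where $q$ is a polynomial and $\lambda$ is a nonnegative real number (a sum of some of the $d_{jj'}$). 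This is precisely the exponential-polynomial form alluded to in the excerpt.

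The key observation driving both bounds is how $|e^{\ii k\lambda}| = e^{-\lambda\,\im k}$ behaves. First I would identify the two extreme exponents. The purely diagonal product contributes the term $\lambda=0$ with polynomial part $\prod_{j=1}^N(\al_j - \tfrac{\ii k}{4\pi})$, of exact degree $N$ in $k$; this is the unique term with the smallest exponent $\lambda=0$. At the other end, let $\La := \max$ over all permutation-terms of the total exponent $\lambda$; this maximal-exponent term again has a nonzero polynomial coefficient. For the upper bound on $\im k$, I would argue that when $\im k$ is very negative, the factor $e^{-\lambda\,\im k}$ grows and the term with the largest $\lambda=\La$ dominates all others by a factor growing like $e^{(\La-\lambda')(-\im k)}$ for the next-largest exponent $\lambda'<\La$; comparing magnitudes and controlling the polynomial coefficients by their degrees shows $\det\Ga_{\al,Y}(k)\neq 0$ once $\im k < -c_{1,1}\ln(|\re k|+1) - c_{1,2}$, which gives the stated upper barrier. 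Symmetrically, for the lower bound, when $\im k$ is very positive the term with the smallest exponent $\lambda=0$ — the pure polynomial of degree $N$ — dominates, yielding the lower logarithmic barrier.

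Concretely, for each regime the mechanism is: the dominant term has magnitude comparable to $|k|^{\deg}\,e^{-\lambda\,\im k}$ while every subdominant term is smaller by a fixed exponential gap in $\im k$ times at most a polynomial factor in $|k|$. A zero forces the dominant and subdominant parts to balance, and since a polynomial factor $|k|^{m}$ is absorbed by an exponential $e^{c|\im k|}$ precisely when $|\im k| \gtrsim \ln|k|$, the balancing can only occur inside a logarithmic strip. Quantitatively, if $T_0(k)=q_0(k)e^{\ii k\lambda_0}$ is the dominant term with exponent $\lambda_0\in\{0,\La\}$ and the remainder $R(k)$ collects the rest, I would bound $|R(k)| \le C(1+|k|)^{M} e^{-(\lambda_0-\eta)\,\im k}$ for a uniform gap $\eta>0$ (with $\eta\le\min$ over consecutive distinct exponents), and $|T_0(k)|\ge c(1+|k|)^{N}e^{-\lambda_0\,\im k}$ away from the zeros of $q_0$; then $|T_0(k)|>|R(k)|$ whenever $e^{\eta|\im k|} > (C/c)(1+|k|)^{M-N}$, i.e. whenever $|\im k|$ exceeds a constant times $\ln(|\re k|+1)$ (noting that on the relevant region $|k|\asymp|\re k|$ once $\im k$ is controlled, or one handles $|\im k|$ large separately). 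This furnishes the four constants $c_{i,j}>0$.

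The main obstacle I anticipate is bookkeeping, not conceptual: I must ensure the dominant terms at both extremes have genuinely nonvanishing leading coefficients and are not accidentally cancelled against other permutation-terms sharing the same exponent $\lambda_0$. At $\lambda=0$ this is clear since only the identity permutation avoids all off-diagonal factors, giving the clean degree-$N$ polynomial. At $\lambda=\La$ one must check that the maximal total distance is achieved (finitely many permutations) and that the corresponding coefficients, being products of the constants $-\tfrac{1}{4\pi d_{jj'}}$ possibly times some $\al_j$, do not sum to zero — and if several permutations tie at $\La$, one uses that the resulting coefficient is a nonzero polynomial in $k$ generically, or isolates the exact leading exponent more carefully. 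A secondary subtlety is passing between $|k|$ and $|\re k|$: on the logarithmic boundary curves $|\im k|$ is itself only logarithmic in $|\re k|$, so $|k|\asymp|\re k|+1$ there and the substitution of $\ln(|k|)$ by $\ln(|\re k|+1)$ costs only a change in the additive constants $c_{i,2}$, which is harmless.
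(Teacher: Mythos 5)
Your proposal follows essentially the same route as the paper's proof: expand $\det\Ga_{\al,Y}(z)$ via the Leibniz formula into an exponential polynomial $\sum_{l=0}^{\nu}p_l(z)e^{\ii z q_l}$ with $0=q_0<q_1<\dots<q_\nu$, observe that $p_0(z)=\prod_{j}(\ii z-4\pi\al_j)$ has degree $N$ while every other $p_l$ has degree $\le N-2$, and then exclude zeros outside a logarithmic strip by letting the $q_0=0$ term dominate near the real axis and the top-exponent term dominate deep in $\CC_-$. Two expository slips are worth fixing. First, your labels are swapped: the regime $\im k\to-\infty$, where the largest exponent dominates, excludes zeros \emph{below} a logarithmic curve and therefore yields the \emph{lower} bound in (\ref{e:beLog}) (the paper's constant $c_{2,1}= N/(q_\nu-q_{\nu-1})$), while the polynomial-dominant regime yields the upper bound ($c_{1,1}= 2/q_\nu$). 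Second, your unified remainder estimate $|R(k)|\le C(1+|k|)^{M}e^{-(\lambda_0-\eta)\im k}$ is only valid in the deep regime; when $\lambda_0=0$ and $\im k<0$ the subdominant exponents \emph{exceed} $\lambda_0$, so the remainder is bounded by $C(1+|k|)^{N-2}e^{-q_\nu\im k}$ and domination of $p_0$ requires $|\im k|\lesssim\tfrac{2}{q_\nu}\ln(|\re k|+1)$ rather than the reverse inequality — which is, of course, exactly why zeros are pushed down at a logarithmic rate.

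The one genuine gap is your treatment of the maximal exponent. The lemma is quantified over \emph{all} $\al\in\CC^N$ and all configurations $Y$, so the appeal to the coefficient at the combinatorial maximum $\La$ being ``nonzero generically'' cannot be used: for special data the permutation terms realizing $\La$ may cancel identically. The fix is the fallback you yourself mention and is exactly what the paper builds into its setup: first collect all terms sharing an exponent into a single coefficient, discard the exponents whose collected coefficient vanishes identically, and let $q_\nu$ be the largest \emph{surviving} exponent, so that $p_\nu\not\equiv 0$ by construction and hence $|p_\nu(z)|\ge C_3>0$ for $|z|$ large. One must also guarantee that some positive exponent survives at all: if none does, $D=p_0$ is a polynomial and the lemma is trivial, and the paper observes that this happens exactly when $N=1$, since for $N\ge 2$ the terms carrying the minimal positive exponent $q_1=2\min_{j\ne j'}|y_j-y_{j'}|$ cannot cancel. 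With $q_\nu$ so defined, the deep-regime comparison is against the second-largest surviving exponent $q_{\nu-1}$ (gap $\eta=q_\nu-q_{\nu-1}$, remainder degree $M=N$ because $p_0$ is now among the subdominant terms), and your argument closes, reproducing the paper's constants. The $|k|$ versus $|\re k|$ issue you raise is handled correctly by your parenthetical remark.
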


\begin{proof}
Consider $\det \Ga_{\al,Y} (z)$ as a function in $z$ only. Then 
%, by (\ref{e:Ga}),  
there exists a unique representation 
\begin{equation} \label{e:det=Sexp}
\det \Ga_{\al,Y} (z) = (-4\pi)^{-N} D (z) , \quad D(z) = \sum_{l=0}^\nu p_l (z) e^{\ii z q_l} ,
\end{equation}
where  the numbers $\nu = \nu (\al,Y) \in \NN \cup \{0\}$, $q_l = q_l (\al,Y) \ge 0$, and the nontrivial polynomials $p_l (z)$  (i.e., $p_l \not \equiv 0$) with coefficient depending on $\al$ and $Y$ are such that
\begin{gather*}
0 = q_0 < q_1 < \dots < q_\nu \le N \diam (Y).
% \qquad p_0 (z) =  \prod_{j=1}^N  (\ii z - 4 \pi \al_j )  , \\
%\deg p_l \le N-2  \text{ for all } 1 \le l \le \nu. 
\end{gather*}
Clearly, $p_0 (z) =  \prod_{j=1}^N  (\ii z - 4 \pi \al_j )$ and $\deg p_l \le N-2  \text{ for all } 1 \le l \le \nu.$

 If $\nu=0$, $\det \Ga_{\al,Y} (z) =  p_0 (z)$ and so the statement of the lemma is obvious.
 
Note that 
%\begin{equation}
$\nu =0$  if and only if  $N=1$.
%\end{equation}
Indeed, for $N \ge 2$ it is easy to see that $q_1 = 2 \min_{j \neq j'} |y_j - y_{j'}|$ and the terms containing $e^{\ii z q_1}$ do not cancel.

Let $N\ge 2$ and $\nu \ge 1$. We prove (\ref{e:beLog}) in several regions of $\CC$ and then take the largest of the corresponding constants $c_{i,j}$.
First, note that (\ref{e:beLog}) is obvious in any disc $\DD_r (0)$ and also for $z \in \CC_+ \cup \RR$ (due to asymptotics of exponential terms 
in (\ref{e:det=Sexp})).

Let $z \in \CC_-$. Then 
there exists $ r_1 (\al)>0$ and $ C_1 (\al,Y)>0$ so that
\[
 |D(z) | \ge |p_0 (z)| - C_1  |z|^{N-2} |e^{\ii z q_\nu}| \ge 2^{-1} |z|^{N-2} 
\left[ (|z|+1)^2 -2 C_1 e^{-q_\nu \im z}\right] \quad \text{for $|z|\ge r_1 $.}
\]
Assuming additionally $z \in \Om_1 =\{ \im z > - c_{1,1} \ln (|\re z|+1) + c_{1,2} \}$, we  see that 
\[
 (|z|+1)^2 -2 C_1 e^{-q_\nu \im z} \ge (|z|+1)^2 - (|\re z|+1)^{q_\nu c_{1,1} } 2C_1 e^{-q_\nu c_{1,2}} >0 
\]
whenever $C_1 e^{-q_\nu c_{1,2}} \le 1/4$ and $c_{1,1} \le 2/q_\nu$.
Hence, such a choice of $c_{1,1}, c_{1,2}$ ensures the absence of zeros of $D$ in $(\Om_1 \cap \CC_-) \setminus \DD_{r_1} (0)$.

On the other hand, for certain $r_2 (\al)>0$, $C_3 (\al,Y)>0$, and  $C_4 (\al,Y)>0$, it follows from $|z|\ge r_2$ that 
\begin{equation*}
|D(z)| \ge C_3 e^{-q_\nu \im z} - C_4  |z|^N e^{- q_{\nu-1} \im z} = 
C_3 e^{- q_{\nu-1} \im z} (e^{(q_{\nu-1}-q_\nu) \im z } - |z|^N C_4/C_3).
\end{equation*}
Thus, taking $c_{2,1} \ge N/(q_\nu-q_{\nu-1})$ it is easy to show the existence of  $c_{2,2}$ and $ r_3 (\al,Y) > r_2$ such that $z \in \Om_2 =\{ \im z < - c_{2,1} \ln (|\re z|+1) - c_{2,2} \}$ and $z \in \CC_- \setminus \DD_{r_3} (0)$ imply $|D(z)|>0$.
\end{proof}

\begin{prop} \label{p:Res}
Let $\al  \in \CC^N$. Then there exists a closed operator $H_\al = -\De_{\al,Y}$ in $\L^2 (\RR^3)$ with the spectrum 
%\begin{equation} \label{e:siH}
$\sigma (H_\al) = [0,+\infty) \cup \{z^2 : z \in \CC_+ , \ \det \Ga_{\al,Y} (z) =0\} $
%\end{equation}
and the resolvent $(H_\al - z^2)^{-1} $ defined for $\{ z \in \CC_+ : z^2 \not \in \sigma (H_\al) \}$ by the integral kernel 
(\ref{e:Res}). If $\al \in (\CC_-  \cup \RR)^N $, the operator $\ii H_\alpha$ is 
m-accretive in the sense of \cite{Kato13}.
\end{prop}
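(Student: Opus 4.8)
The plan is to construct $H_\al$ from its resolvent and then read off the spectrum and the dissipativity directly from the explicit kernel (\ref{e:Res}). First I would fix $z\in\CC_+$ with $\det\Ga_{\al,Y}(z)\ne0$ and let $R_\al(z^2)$ be the integral operator with kernel (\ref{e:Res}); it is bounded, since $R_0(z^2):=(-\De-z^2)^{-1}$ is bounded for every $z\in\CC_+$ (as then $z^2\notin[0,\infty)$) and the correction is of rank at most $N$, its range being spanned by the functions $G_z(\cdot-y_j)\in\L^2(\RR^3)$. The algebraic heart of the matter is the first resolvent identity
\begin{equation*}
R_\al(z_1^2)-R_\al(z_2^2)=(z_1^2-z_2^2)\,R_\al(z_1^2)\,R_\al(z_2^2),
\end{equation*}
which is exactly what the $z$-dependence of $\Ga_{\al,Y}$ is designed to produce; its verification is the same Krein-type computation as in the self-adjoint theory \cite{AGHH12} and never uses that $\al$ is real. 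Together with $\ker R_\al(z^2)=\{0\}$ and density of $\ran R_\al(z^2)$ — both following since a nonzero element of the kernel (or, via the adjoint, of the orthocomplement of the range) would force a nonzero linear combination of the $G_z(\cdot-y_j)$ to lie in $H^2(\RR^3)$, which is impossible — the theory of pseudo-resolvents \cite{Kato13} yields a unique densely defined closed $H_\al$ with resolvent $R_\al$ and $\dom H_\al=\ran R_\al(z^2)$ independent of $z$.

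For the spectrum I would note that $R_\al-R_0$ is of finite rank, hence compact, so the essential spectrum is unchanged and equals $[0,\infty)$; on $\CC\setminus[0,\infty)$ the formula (\ref{e:Res}) exhibits $(H_\al-z^2)^{-1}$ as a meromorphic operator-valued function of $z\in\CC_+$ whose only singularities are poles at the zeros of $\det\Ga_{\al,Y}$, where any nonzero $\vec c\in\ker\Ga_{\al,Y}(z)$ yields the genuine $\L^2$-eigenfunction $\sum_j c_j G_z(\cdot-y_j)$. This gives $\sigma(H_\al)=[0,\infty)\cup\{z^2:z\in\CC_+,\ \det\Ga_{\al,Y}(z)=0\}$ and the asserted resolvent set.

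To treat the dissipative case $\al\in(\CC_-\cup\RR)^N$, i.e.\ $\im\al_j\le0$, I would first extract from $\dom H_\al=\ran R_\al(z^2)$ the standard domain description: each $u\in\dom H_\al$ has the form $u=\phi+\sum_j c_j G_z(\cdot-y_j)$ with $\phi=R_0(z^2)f\in H^2(\RR^3)$, so $u$ is continuous away from $Y$ and near each center satisfies $u(x)=\tfrac{a_j}{4\pi|x-y_j|}+b_j+o(1)$ with charge $a_j=c_j$ and regular part $b_j=\phi(y_j)+\tfrac{\ii z}{4\pi}c_j+\sum_{j'\ne j}G_z(y_j-y_{j'})c_{j'}$; unravelling $\Ga_{\al,Y}(z)\vec c=(\phi(y_j))_j$ shows precisely that $b_j=\al_j a_j$. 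Applying Green's second identity to $u$ and $\bar u$ on $\RR^3\setminus\bigcup_j B_\ep(y_j)$ and letting $\ep\to0$, the volume term tends to $2\ii\,\im(H_\al u,u)$ while each small sphere contributes $b_j\bar a_j-\bar b_j a_j$, so that
\begin{equation*}
\im(H_\al u,u)=\sum_{j=1}^N\im(\al_j)\,|a_j|^2\le0 ,
\end{equation*}
whence $\re(\ii H_\al u,u)=-\im(H_\al u,u)\ge0$ and $\ii H_\al$ is accretive. The hard part will be this accretivity step: beyond controlling the limits of the boundary integrals one must pin down the correct sign and the exact boundary relation $b_j=\al_j a_j$ from the definition of $\Ga_{\al,Y}$, since an error there would reverse the inequality.

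Finally, by the Lumer--Phillips criterion \cite{Kato13} it suffices to exhibit one $\lambda_0>0$ with $\ran(\ii H_\al+\lambda_0)=\L^2(\RR^3)$, which amounts to $\ii\lambda_0=z_0^2\in\rho(H_\al)$ for $z_0=\ee^{\ii\pi/4}\sqrt{\lambda_0}\in\CC_+$. For $\lambda_0$ large, $\im z_0=\sqrt{\lambda_0/2}$ exceeds the upper logarithmic bound in (\ref{e:beLog}) of Lemma \ref{l:asDetGa}, so $\det\Ga_{\al,Y}(z_0)\ne0$ and $z_0^2\notin[0,\infty)$; hence $z_0^2\in\rho(H_\al)$ by the spectrum computation, giving the required surjectivity and, with accretivity and dense domain, the m-accretivity of $\ii H_\al$.
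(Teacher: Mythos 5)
Your proposal is correct, and for the first statement it follows essentially the same route the paper takes: the paper simply delegates the construction and the spectral description to ``modification of the arguments of \cite[Section II.1.1]{AGHH12}'' together with Lemma \ref{l:asDetGa} and the adjoint identity (\ref{e:Ga*}), and what you write out (pseudo-resolvent construction from the kernel (\ref{e:Res}), first resolvent identity, injectivity and dense range via the $H^2$-regularity argument, finite-rank perturbation for the essential spectrum, eigenfunctions $\sum_j c_j G_z(\cdot-y_j)$ at zeros of $\det\Ga_{\al,Y}$) is precisely that modification made explicit. Where you genuinely diverge is the m-accretivity. The paper's argument stays entirely at the resolvent level: for $z\in\ii\RR_+$ the matrix $\ii\Ga_{\al,Y}(z)$ is accretive on $\CC^N$ (its diagonal picks up $-\im\al_j\ge 0$ and the off-diagonal part is real symmetric since $G_z$ is real there), accretivity of a matrix passes to its inverse, the Krein formula (\ref{e:Res}) then exhibits $(\ii H_\al-\ii z^2)^{-1}$ as a sum of accretive operators, and accretivity of the resolvent transfers back to $\ii H_\al$; maximality follows from nonemptiness of the resolvent set. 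You instead descend to the domain description $b_j=\al_j a_j$ and prove $\im(H_\al u,u)=\sum_j\im(\al_j)|a_j|^2\le 0$ by Green's identity on $\RR^3$ with small balls removed. Both are valid (your boundary relation and sign do come out right, consistent with the $N=1$ case of \cite{AGHS83}); the paper's route is shorter and avoids the delicate limit of the boundary integrals that you yourself flag as the hard part, while yours is more transparent physically, makes the dissipation mechanism at each center visible, and does not require knowing that accretivity is inherited by inverses and transferred through the Krein formula. Your use of Lemma \ref{l:asDetGa} to produce an explicit point of the resolvent set for the surjectivity step is a concrete instantiation of the paper's terser ``since the resolvent set of $H_\al$ is nonempty''.
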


\begin{proof}
The proof of the first statement can be obtained by modification of the arguments of \cite[Section II.1.1]{AGHH12}
with the use of Lemma \ref{l:asDetGa} and the formula 
%%%%%%%%%%%
%For large enough $\om >0$, let us consider the operators $\wh H^\om$ defined by 
% \cite[formula (II.1.1.15)]{AGHH12} with $\mu_j$ given by  
%\cite[formula (II.1.1.15)]{AGHH12} (but, in contrast with \cite{AGHH12}, $\al_j$ are allowed to be nonreal). 
%By Lemma \ref{l:asDetGa}, all $z \in \CC_+$ except possibly a finite set satisfy $\det \Ga_{\al,Y} (z) \neq 0$. For all such points $(\wh H^\om - z^2)^{-1}$ converges 
%in the norm sense to a certain bounded operator $R (z^2)$. One can show in the same way as in \cite{AGHH12} that $R(z^2)$ is a resolvent of a certain closed operator 
%$\wh H_\al $. It is easy to see that 
\begin{gather}
\text{$(\Ga_{\al,Y} (z))^* = \Ga_{\overline{\al},Y} (-\overline{z})$, where $\overline{\al} := (\overline{\al}_j)_{j=1}^\infty$} \label{e:Ga*}
%\\
%\text{ and that the adjoint to $(\wh H_\al - z^2)^{-1}$ is 
%$(\wh H_{\overline\al} - (-\overline{z})^2)^{-1}$} \label{e:R*}. 
\end{gather}
(here $\overline{z}$ is the complex conjugate of $z \in \CC$).
%Taking this into account, one can apply the arguments of \cite{AGHH12} to show that the domain of $\wh H_\al $ is dense in $\L^2 (\RR^3)$. Taking $ H_\al := \Ftr^{-1} \wh H_\al \Ftr$, where $\Ftr$ is the Fourier transform,
%one can check that $H_\al$ has the integral kernel (\ref{e:Res}). 
%The proof of formula (\ref{e:siH}) can be obtained following the lines of that of  \cite[Theorem 1.1.4]{AGHH12}. 

Let now $\al \in (\CC_-  \cup \RR)^N $. Then, it is easy to see that, for 
$z \in \ii \RR_+$ the operator $\ii \Ga_{\al,Y} (z) $ is  accretive in the $N$-dimensional $\ell^2$-space. So, if additionally $\det \Ga_{\al,Y} (z) \neq 0$,  the operator
$(\ii H_\al - \ii z^2)^{-1}$ and, in turn, $\ii H_\al $ are accretive. 
Since the resolvent set of $H_\al$ is nonempty, $\ii H_\al $ is m-accretive.
\end{proof}

\section{Resonances and related optimization problems}
\label{s:ResAndOpt}

We will use the compactifications $\overline{\CC} =  \{ \infty \} \cup \CC$, $\overline{\RR}= \{ \infty \} \cup \RR$, and $\overline{\CC}_- := \{ \infty \} \cup \RR \cup \CC_- $.

To carry over the above definitions of point interactions to 
the extended $N$-tuples 
$\al \in \overline{\CC}^N$,
we put, following \cite{AGHH12}, $\De_{\al,Y}=\De_{\wt \al, \wt Y}$, 
where 
\begin{multline} \label{e:wtAl}
\text{$\wt \al$ and $\wt Y$ are produced from $\al$ and $Y$, resp.,} \\ 
\text{by removing of the components with numbers $j$ satisfying $\wt \al_j = \infty$.} 
\end{multline}
(It is assumed in the sequel that if all $\al_j \in \CC$, then $\wt \al := \al$, $\wt Y:=Y$). 
Using this rule we can formally define the function $\det \Ga_{\al,Y} (\cdot) := \det \Ga_{\wt \al, \wt Y} (\cdot)$
for arbitrary $\al \in \overline{\CC}^N$.

Points $k$ belonging to the set  $\Si (\al,Y)$ of zeroes of the 
%analytic in $z \in \CC$ 
determinant 
$
\det \Ga_{\al,Y} (z) 
$
will be called $\Ga^{-1}$-poles (or $\Ga_{\al,Y}^{-1}$-poles). 
The set of (continuation) resonances $\Si_\res (\al,Y)$ associated with $H_\al$ 
is defined by (\ref{e:SiRes}).
%\begin{equation*} %\label{e:SiRes}
%\Si_\res (\al,Y) := \Si (\al,Y) \cap (\CC_- \cup \RR) .
%\end{equation*}
(This definition is in agreement with the case of real $\al_j$ considered in \cite{AGHH12,AH84,G84}, where also the connection of $\Ga_{\al,Y}^{-1}$-poles in $\CC_+$ with eigenvalues of $H_\al$ is addressed.
For the origin of this and related approaches to the understanding of resonances, we refer
to \cite{AH84,DZ16,G84,RSIV78,V72} and the literature therein). 

The multiplicity of a resonance or a $\Ga^{-1}$-pole will be understood as the multiplicity 
of a corresponding zero of the analytic function $\det \Ga_{\al,Y} (\cdot)$ (see \cite{AGHH12}).

For fixed $Y$, consider the set 
\begin{equation} \label{e:F}
\F = \{ -\De_{\al,Y} \ : \ \al \in S \}
\end{equation}
 of operators $H_\al$ with  $N$-tuples $\alpha$ belonging 
to a certain set $S \subset \overline{\CC}^N $.
Let us introduce the sets of all possible 
resonances $\Si_\res [\F]$ and $\Ga^{-1}$-poles $\Si [\F]$ generated by $H_\al \in \F$,
\begin{gather*} %\label{e:admSi}
\Si [\F] := \bigcup_{-\De_{\al,Y} \in \F} \Si (\al,Y) , \quad
\Si_\res [\F] := \bigcup_{-\De_{\al,Y} \in \F} \Si_\res (\al,Y) .
\end{gather*}

We consider $\F$ as \emph{a feasible set} \cite{BV04} of operators.
The main attention will be paid to the direct products $S=\A^N$ of  \emph{the sets $\A \subset  \overline{\CC}_-$ of feasible dissipative $\al_j$-parameters}. For these direct products, we employ the  notation 
$
\F_\A :=  \{ H_\al \ : \ \al \in \A^N  \} .
$ 

Our main goal is to find  resonances $k$ which are extremal over $\F_{\overline{\RR}}$ or 
$\F_{\overline{\CC}_-}$ in the framework of the  Pareto optimization approach of
\cite{Ka13,Ka14,KLV17}. 
In a wide sense,  resonances globally Pareto extremal over $\F$ can be understood 
as boundary points of \emph{the set of achievable resonances} $\Si_\res [\F]$.
Depending on the applied background of more narrow optimization problems,
various parts of the boundary $\Bd \Si_\res [\F]$ can be perceived as optimal resonances (see the discussion in Section \ref{s:AddRem} and in \cite[Section A.2]{KLV17}). Note that our definitions are slightly different from those in \cite{BV04}. In particular, from our point of view, the use of positive cones for the definition of Pareto optimizers is sometimes too restrictive for the needs of resonance optimization.

One of particular optimization
problems can be stated in the following way.
If $k \in \Si_\res (\al,Y)$ is interpreted as a resonance of the wave-type equation $\pa_t^2 u - \De_{\al,Y} u = 0$ (cf. \cite{LP71}) with `singular potential term $V=\sum_{j=1}^N \mu (\al_j) \de (x - y_j) u (x)$ ', then $\fr = \re k$ can be understood  
as a (real) frequency  of the associated resonant mode and $\dr = - \im k \ge 0$ is the 
corresponding exponential rate of decay (cf. \cite{DZ16,Ka13,Ka14}).

We say that $\fr \in \RR$ is an achievable frequency if $\alpha  \in \re \Si_\res [\F]$.
The properties of the set $\re \Si_\res [\F_{\overline{\RR}}]$ are discussed in 
Section \ref{s:AddRem}.

\begin{defn}[see \cite{Ka13} for 1-D resonances] \label{d:MinDec}
Let $\fr \in \re \Si_\res [\F]$. 
% be an achievable  frequency over $\F$.
\emph{The minimal decay rate $\dr_{\min} (\fr) = \dr_{\min} (\fr;\F)$ for the frequency} $\fr$
is defined by
\[
 \ 
\dr_{\min} (\fr;\F) := \ \inf \{ \dr \in [0,+\infty) \ : \ \fr - \ii \dr \in \Si_\res [\F] \} . \ 
\]
If $k = \fr - \ii \dr_{\min} (\fr)$ is a resonance of a certain feasible operator $H_\al \in \F$
(i.e., the minimum is achieved), 
we say that $k$, $H_\al$, and $\al$ are of \emph{minimal decay for} $\fr$. 
\end{defn}
%When $H_\al$ is of minimal decay, we will say that \emph{the tuple $\al$ is of minimal decay.}

\begin{ex} \label{ex:1center}
Let $N=1$ and $Y=\{y_1\}$. Then  $\Si (\al_1,Y)$ consists of one $\Ga^{-1}$-pole $k=-\ii 4 \pi \al_1$ of multiplicity 1 \cite{AGHS83,AGHH12}.

\item[(i)] In the case $\A= \RR$, one has $\Si [\F_{\RR}] = \ii \RR := \{ i t  \ : \ t \in \RR \}$ and 
$\Si_\res [\F_{\RR}] = \ii (-\infty,0] $. The function $\dr_{\min} (\cdot; \RR)$ is defined 
on the set of achievable  frequencies consisting of one point $\re \Si_\res [\F_\RR] = \{ 0 \}$
and one has $\dr_{\min} (0; \F_\RR) = 0$. The resonance $k=0$ and the operator $H_0$ are of minimal decay
for the frequency $0$.

\item[(ii)] Let $\A  = \overline{\CC}_-$. Then $\Si [\F_{\overline{\CC}_-}] = \ii \CC_+ \cup \ii \RR$.
For each $\fr \in (-\infty,0]$, we have $\dr_{\min} (\fr; \F_{\overline{\CC}_-}) = 0$, and see that $k=\fr$ and
$H_{\ii \fr (4 \pi)^{-1}}$ are the resonance and an operator of minimal decay for $\fr$.
\end{ex}

\begin{rem} \label{r:RealRes}
It follows from Example \ref{ex:1center} (ii)  that,  in the dissipative case, nonzero real  resonances are not necessarily associated with embedded eigenvalues of $H_\al$.
Indeed, taking $N=1$ and $\al_1 \in \ii \RR_-$, we see that there exists a real resonance $k_0 <0$. The fact that $k_0^2$ is not an eigenvalue of $H_\al$ follows easily from the proof of \cite[Theorem I.1.1.4]{AGHH12}.
\end{rem}

\section{Existence of optimizers and perturbation theory}
\label{s:Da}

For every $a = (a_1, \dots, a_N) \in \overline{\CC}^N $, let us denote 
\begin{gather}
\text{by $n (a)$ the number of parameters $a_j$, $j=1, \dots,N$, that
are not equal to $\infty$,} \label{e:n}
\\ 
\text{and, for $k \in \Si [\F_\A]$, by }
n_{\min } (k; \F_\A) := \min \{ n (a) \ : \ k \in \Si (a,Y) \text{ and } a \in \A^N \}  \label{e:nmin}
\end{gather}
the minimal number of centers needed to generate $k$ over $\F_\A$.

Let us introduce on the compactification $\overline{\CC}$ of $\CC$ a metric $\rho_{\overline{\CC}} (z_1,z_2)$ generated by 
the stereographic projection and, 
e.g., the $\ell^2$-distance on the unit sphere $\SSS_2 \subset \RR^3$.
The direct product $\overline{\CC}^N $ will be considered as a compact metric space with the distance 
$\rho_{\overline{\CC}^N} (\alpha, \alpha')$
generated by the $\ell^2$-distance on $\SSS_2^N \subset \RR^{3N}$. 

Recall that, for $ S \subset \overline{\CC}^N$, the feasible set $\F$ of  operators is defined by (\ref{e:F}), and that $\Si [\F] =  \bigcup_{\al \in S } \Si (\al,Y)$ is the corresponding set of achievable  $\Ga^{-1}$-poles.

\begin{thm} \label{t:SAclosed}
Let the set $ S $ be closed in the metric space $\left( \overline{\CC}^N, \rho_{\overline{\CC}^N} \right)$. Then $\Si [\F] $ is a closed set and, for every achievable  frequency, there exists an operator 
$H_\al \in \F$ of minimal decay (in the sense of Definition \ref{d:MinDec}).
\end{thm}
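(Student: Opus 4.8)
The plan is to establish the two claims in order, with nearly all the effort devoted to the closedness of $\Si[\F]$; once that is known, the existence of operators of minimal decay follows from a soft topological argument. To prove $\Si[\F]$ closed, I would fix a sequence $k_n \in \Si[\F]$ with $k_n \to k_* \in \CC$, pick $\al^{(n)} \in S$ with $\det \Ga_{\al^{(n)},Y}(k_n) = 0$, and use that $S$, being closed in the compact space $(\overline{\CC}^N,\rho_{\overline{\CC}^N})$, is compact: passing to a subsequence I obtain $\al^{(n)} \to \al^* \in S$. Writing $J := \{ j : \al^*_j = \infty\}$, the goal is then to show $\det \Ga_{\al^*,Y}(k_*) = 0$.

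The main obstacle is that $\al \mapsto \det \Ga_{\al,Y}$ is not continuous at tuples with infinite entries in the naive sense, since $\al_j^{(n)} \to \infty$ forces removal of the $j$-th center through the rule (\ref{e:wtAl}). To control this degeneration I would expand the determinant multilinearly in its diagonal entries $d_j := \al_j - \tfrac{\ii z}{4\pi}$, namely
\[
\det \Ga_{\al,Y}(z) = \sum_{T \subseteq \{1,\dots,N\}} \Big(\prod_{j \in T} d_j\Big)\, \det M_{T^c}(z),
\]
where $M(z) = [-\wt G_z(y_j-y_{j'})]$ has vanishing diagonal and $M_{T^c}$ is its principal submatrix on $\{1,\dots,N\}\setminus T$. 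Dividing by $\prod_{j\in J}\al_j^{(n)}$ and letting $n\to\infty$, every term with $J\not\subseteq T$ retains a factor $1/\al_j^{(n)}\to 0$, while the surviving terms ($T\supseteq J$) reassemble precisely into the multilinear expansion of the reduced determinant, giving, locally uniformly on $\CC$,
\[
\frac{\det \Ga_{\al^{(n)},Y}(z)}{\prod_{j\in J}\al_j^{(n)}} \longrightarrow \det \Ga_{\al^*,Y}(z),
\]
with the right-hand side read via (\ref{e:wtAl}). Since also $\prod_{j\in J} d_j^{(n)}(z)\big/\prod_{j\in J}\al_j^{(n)} \to 1$ uniformly on compacta and the zeros of the factor $\prod_{j\in J}d_j^{(n)}$ escape to $\infty$, the normalized function $g_n := \det \Ga_{\al^{(n)},Y}/\prod_{j\in J}d_j^{(n)}$ converges locally uniformly to $\det \Ga_{\al^*,Y}$ and, for large $n$, agrees in its zeros on any fixed disc with $\det \Ga_{\al^{(n)},Y}$.

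With locally uniform convergence secured, I would invoke Hurwitz's theorem. The limit $\det \Ga_{\al^*,Y}$ is not identically zero: its leading polynomial $p_0(z) = \prod_{j\notin J}(\ii z - 4\pi\al^*_j)$ is nontrivial, and if $J = \{1,\dots,N\}$ the reduced determinant is the constant $1$. Hence $k_n \to k_*$, being zeros of $g_n$ converging locally uniformly to a nonvanishing holomorphic limit, forces $\det \Ga_{\al^*,Y}(k_*) = 0$; the subcase $J = \{1,\dots,N\}$ is vacuous, since then the limit has no zeros and all $k_n$ would escape to $\infty$. Thus $k_* \in \Si(\al^*,Y) \subseteq \Si[\F]$, proving $\Si[\F]$ closed.

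For the second assertion I would observe that $\Si_\res[\F] = \Si[\F] \cap \{z : \im z \le 0\}$ is closed as an intersection of closed sets. Fixing an achievable frequency $\fr \in \re \Si_\res[\F]$, the set $A_\fr := \{ \dr \ge 0 : \fr - \ii\dr \in \Si_\res[\F]\}$ is the preimage of $\Si_\res[\F]$ under the continuous map $\dr \mapsto \fr - \ii\dr$, hence closed in $[0,+\infty)$; it is nonempty since $\fr$ is achievable and bounded below by $0$. Therefore the infimum defining $\dr_{\min}(\fr;\F)$ is attained, so $\fr - \ii\dr_{\min}(\fr) \in \Si_\res(\al^*,Y)$ for some $\al^* \in S$, and $H_{\al^*}$ is of minimal decay for $\fr$. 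The sole delicate point in the whole proof is the normalized limit of the determinant, i.e.\ the correct bookkeeping for the centers sent to $\al_j = \infty$.
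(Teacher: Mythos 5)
Your argument is correct and is essentially the paper's own: your normalized function $\det \Ga_{\al^{(n)},Y}\big/\prod_{j\in J} d_j^{(n)}$ is precisely the regularized determinant $\D_{a}(\beta;z)$ of Lemma \ref{l:LocDet} (obtained there via the substitution $\beta_j=-1/\al_j$ and row rescaling), and the paper likewise deduces closedness from compactness of $S$ together with the resulting joint analyticity (i.e.\ a Hurwitz-type continuity of zeros), the attainment of $\dr_{\min}$ then following by the same soft argument. The only cosmetic point is that for $j\in J$ some $\al_j^{(n)}$ may already equal $\infty$, which you can absorb by passing to a further subsequence with a fixed pattern of infinite coordinates.
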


This theorem easily follows from the 
following lemma and the compactness argument.
Recall that $\wt \al$ and $\wt Y$ are defined 
by (\ref{e:wtAl}).

\begin{lem} \label{l:LocDet}
For every $a \in \overline{\CC}^N $ there exists 
an open neighborhood 
$\W \subset \overline{\CC}^N$ of $a$ (in the topology of $(\overline{\CC}^N,\rho_{\overline{\CC}^N})$), an open set $\B \subset \CC^N$, 
a homeomorphism $\beta : \W \to \B$, and an analytic 
function $\D_{a} : \B \times \CC \to \CC $ such that, for every 
$\alpha \in \W$, the sets of zeroes of the function 
$\det \Ga_{\wt \al, \wt Y} (\cdot): \CC \to \CC$ coincide with  
 the sets of zeroes of the function $\D_{a} (\beta (\al) ; \cdot) : \CC \to \CC$ 
taking multiplicities into account.
\end{lem}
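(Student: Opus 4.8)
The plan is to construct $\D_a$ as a regularized determinant in which the infinite parameters are tamed by passing to reciprocal coordinates, and then to identify the zeros of this regularization with those of the reduced determinant prescribed by (\ref{e:wtAl}). Let $J = \{ j : a_j = \infty \}$. First I would define the chart $\beta$ componentwise: $\beta_j (\al_j) = 1/\al_j$ for $j \in J$ (sending $\infty$ to $0$) and $\beta_j (\al_j) = \al_j$ for $j \notin J$. Taking $\W$ to be a product of small stereographic balls around the $a_j$ and setting $\B := \beta [\W]$, the map $\beta$ is a homeomorphism onto an open subset of $\CC^N$, since $\zeta \mapsto 1/\zeta$ is a homeomorphism from a neighborhood of $\infty$ in $\overline{\CC}$ onto a neighborhood of $0$ in $\CC$, and the identity chart handles the finite directions $j \notin J$.

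Next I would define the matrix $\wt \Ga (w; z)$ obtained from $\Ga_{\al,Y}(z)$ by multiplying, for each $j \in J$, the $j$-th row by $w_j$ and substituting $\al_j = 1/w_j$. The diagonal entry in such a row becomes $1 - w_j \tfrac{\ii z}{4\pi}$ and the off-diagonal entries become $-w_j \wt G_z (y_j - y_{j'})$; the rows with $j \notin J$ are kept unchanged with $\al_j = w_j$. Since $\wt G_z$ is entire in $z$, every entry of $\wt \Ga (w;z)$ is a polynomial in $w$ whose coefficients are entire in $z$, so $\D_a (w;z) := \det \wt \Ga (w;z)$ is analytic on $\B \times \CC$. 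By construction, whenever all components $\al_j$ are finite one has $\D_a (\beta (\al); z) = \big( \prod_{j \in J} w_j \big) \det \Ga_{\al,Y}(z)$ with $w_j = 1/\al_j \neq 0$, so in that regime the zero sets already coincide with multiplicity, the prefactor being a nonzero constant in $z$.

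The \emph{decisive step} is the behaviour at $w_j = 0$, i.e. at $\al_j = \infty$. For $\al \in \W$ put $K = \{ j : \al_j = \infty \} \subseteq J$. When $w_j = 0$, the $j$-th row of $\wt \Ga$ reduces to the standard unit row $e_j^\top$ (diagonal entry $1$, off-diagonal entries $0$). Expanding $\det \wt \Ga$ by Laplace along each such row deletes the $j$-th row and column for every $j \in K$, leaving exactly the matrix of the system with the centers $K$ removed — which is precisely the reduced system $(\wt \al, \wt Y)$ of convention (\ref{e:wtAl}). Keeping track of the row-scalings that survive the deletion, this yields $\D_a (\beta (\al); z) = \big( \prod_{j \in J \setminus K} \al_j^{-1} \big)\, \det \Ga_{\wt \al, \wt Y}(z)$, where the prefactor is a nonzero constant independent of $z$. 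Hence the zero sets, with multiplicities, coincide for every $\al \in \W$.

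The only genuinely delicate point is verifying this last identity uniformly over all $\al \in \W$ at once: that the single analytic function $\D_a$ interpolates across the stratification of $\W$ by the number of infinite parameters, while the unit-row Laplace reduction always lands on the correct reduced determinant dictated by (\ref{e:wtAl}) and always leaves a $z$-independent, nonvanishing factor. Once the algebra of that expansion is pinned down, analyticity of $\D_a$ is immediate from the entirety of $\wt G_z$, and the homeomorphism property of $\beta$ is routine; so the reduction identity is where essentially all the content lies.
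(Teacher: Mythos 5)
Your proposal is correct and takes essentially the same route as the paper: the paper likewise passes to reciprocal coordinates in the infinite directions (there $\beta_j=-1/\al_j$) and defines $\D_a$ as the explicit row-rescaled determinant (\ref{e:Da}), so that $\beta_j=0$ yields a unit row whose Laplace expansion deletes the corresponding row and column and lands on $\det\Ga_{\wt\al,\wt Y}$, up to a nonzero $z$-independent factor. The differences (sign convention for $\beta_j$ and the overall $(-1)^n$ normalization) are cosmetic and irrelevant to zero sets and multiplicities, and the "uniformity" you flag at the end is not actually an issue since the identity is needed only pointwise in $\al$.
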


\begin{proof}
When $n(a) = N$ (see (\ref{e:n})), the lemma is obvious with $\beta (\al) \equiv \al$
and \linebreak $\D_{a} (\beta (\al) ; z) \equiv  \det \Ga_{\alpha, Y} (z)$.
Now, let us prove the lemma for the case $n := n (a) < N$. 
Without loss of generality, we can assume 
that $ a_j \in \CC$ for $1 \le j \le n (a)$ and $a_j = \infty$ for $j >  n (a)$.
Put $\beta_j = \alpha_j$ for $1 \le j \le n (a)$. For $n (a) +1 \le j \le N$
and $\al_j \neq 0$, let us define $\beta_j = -1/\alpha_j$ (assuming $1/\infty = 0$).
Then the  following regularized determinant 
\begin{equation} \label{e:Da} \text{\scriptsize
$ \D_{a}  = 
(-1)^n  \left| 
\begin{array}{cccccc} 
\frac{\ii z}{4 \pi}-\beta_1  & \ssdots &  G_z  (y_1-y_n) &  
 G_z  (y_1-y_{n+1}) & \ssdots &   G_z  (y_1-y_N) 
\\
\ssdots & \ssdots & \ssdots  &   \ssdots & \ssdots & \ssdots 
\\
  G_z  (y_n- y_1) & \ssdots & \frac{\ii z}{4 \pi} - \beta_n  & 
  G_z  (y_n - y_{n+1}) & \ssdots &  G_z  (y_n -y_N) 
 \\
  \beta_{n+1} G_z  (y_{n+1}- y_1) & \ssdots &  \beta_{n+1} G_z  (y_{n+1}- y_{N})  & 
\frac{\ii z \beta_{n+1}}{4 \pi}+1  & \ssdots & \beta_{n+1} G_z  (y_{n+1} -y_N) 
\\
\ssdots & \ssdots & \ssdots  &   \ssdots & \ssdots & \ssdots 
\\
  \beta_N G_z  (y_N- y_1) & \ssdots &   \beta_N G_z  (y_N- y_{n})  & 
\beta_N G_z  (y_N -y_{n+1}) & \ssdots &  \frac{\ii z \beta_N}{4 \pi}+1 
\end{array}  \right| $
}
\end{equation}
%of a regularized modification of the matrix $\Ga_{\al,Y}$ 
satisfies the conditions of the lemma.
\end{proof}

Lemma \ref{l:LocDet} allows one to consider the corrections of 
resonances and eigenvalues of $H_\al $ under small perturbations 
of $a \in \overline{\CC}^N$.
The first correction terms under one-parameter perturbations can be described in the following way.

Let $k$ be an $m$-fold zero of 
%\[ \text{
 the determinant $D_a (b; \cdot)$ defined 
by (\ref{e:Da}) at \linebreak $b=(a_1,\dots,a_n, 0, \dots, 0)$
%} \]
and considered as an analytic 
function of the variables $z \in \CC$ and $\beta \in \CC^N$.
Then, for every analytic function $\ga (\zeta)$ that maps $\DD_r (0) \subset \CC$ to $\CC^N $ and satisfy 
$\ga (0) = b$, there exist $\ep >0 $, $\de>0$,
and continuous on $[0,\ep)$ functions $\kappa_j (\zeta)$, $j=1,\dots,m$, 
with the asymptotics 
\begin{equation} \label{e:k_asy}
\kappa_j (\zeta) = k + (C_{\ga,1}  \zeta)^{1/m} + o (\zeta^{1/m}) \  \text{ as $\zeta \to 0$}, 
\quad   
C_{\ga,1}  :=  -\frac{m! \, \pa_\zeta D_a (\ga(\zeta),k) \left. \right|_{\zeta = 0}}{\pa_z^m 
D_a (b,k)}  ,
\end{equation}
such that  all the zeros of 
$D_a (\ga (\zeta), \cdot)$, $ \zeta \in [0,\ep) $, 
lying  in $\DD_{\de} (k)$ 
are given by $\{ \kappa_j (\zeta) \}_1^m$
taking multiplicities into account.
In the case $C_{\ga,1} \neq 0$, 
each branch of $ [\cdot]^{1/m}$ corresponds
to exactly one of functions $\kappa_j$, and so, all $m$ values of functions  $\kappa_j (\zeta)$ for small enough $\zeta>0$
are distinct zeros of $D_a (\ga(\zeta), \cdot)$ of multiplicity 1.

Perturbations of $b$ in the directions of modified parameters $\beta_j$ play a special role.
Note that $D_a (\beta_1, \dots \beta_n, b_{n+1}, \dots, b_N;z) = 
D_a (\al_1, \dots \al_n, 0 , \dots, 0 ;z) = 
\det \Ga_{\wt \al, \wt Y} (z) $. 
Let $k \in \Si (a,Y)$.
If $a_i \in \CC$ (and so $i  \le n (a)$ and $\beta_i = \al_i$, under the convention of Lemma \ref{l:LocDet}), then 
the term $\pa_\zeta D_a (\ga(0),k)$ corresponding to the perturbation of one of the $\beta_j$ takes the form of the first principal minor
\begin{gather} \label{e:pa_alDa}
\pa_{\beta_i} \D_{a} (b;k) = \pa_{\al_i} \det \Ga_{\wt a, \wt Y} (k) = 
\det \Ga^{[i]}_{\wt \al, \wt Y} (k), \\
\text{ where } 
 \Ga^{[i]}_{\wt \al, \wt Y} (k) := \left[ \left( \al_j - \frac{\ii k}{4 \pi} \right) \de_{jj'} 
- \wt G_z  (y_j-y_{j'})\right]_{\substack{j,j'=1, \dots , n \\ 
j,j' \neq i}} . \notag
\end{gather}
If $a_i = \infty$ (and so $i > n$, $b_i = 0$, and $\beta_i = -1/\al_i$),
one has
\begin{equation} \label{e:pa_betaNDa}
%{ \scriptsize
 \pa_{\beta_i} D_a (b,k)  = 
(-1)^{n}  \left| 
\begin{array}{cccc}
\frac{\ii k}{4 \pi}-\beta_1  & \ssdots &  G_k  (y_1-y_{n}) &  
 G_k  (y_1-y_i) 
\\
\ssdots & \ssdots & \ssdots  &   \ssdots 
\\
  G_k  (y_{n} - y_1) & \ssdots & \frac{\ii k}{4 \pi} - \beta_{n}  & 
  G_k  (y_{n} - y_i ) 
 \\
  G_k  (y_i- y_1) & \ssdots &   G_k  (y_i- y_{n})  & 
  c
\end{array}  \right| 
%}
\end{equation}
with arbitrary $c \in \CC$. With $c = \frac{\ii k }{4 \pi}$ the latter equality is obvious from 
(\ref{e:Da}). 
To prove it for arbitrary 
$c \in \CC$, note that the first minor in the left upper corner of the determinant in  (\ref{e:pa_betaNDa}) is equal to
$\det \Ga_{\wt \al, \wt Y} (k) = 0$.

Note that when $z \in \CC$ is fixed, $D_a (\cdot; z)$ is a polynomial in the variables $\beta_j$ 
and that $\pa_{\beta_j}^l D_a (\beta,z) = 0 $ for all $l \ge 2$. This implies the following lemma.
%important for the next sections.

\begin{lem} \label{l:pa=0}
If $k \in \Si (a; Y)$ and $\pa_{\beta_j} D_a (b;k) =0$ for certain $1 \le j \le N$, then 
$k \in \Si (\al; Y)$ for all $\al $ obtained from $a$ by the change of the $j$-th coordinate $a_j$
to an arbitrary number in $\overline \CC$. \qed
\end{lem}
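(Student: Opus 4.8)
The plan is to exploit the elementary but decisive fact, already recorded just before the statement, that the strength $\al_j$ enters the matrix $\Ga_{\al,Y}(z)$ only through the single diagonal entry $\al_j - \tfrac{\ii z}{4\pi}$. Fix $z=k$ and freeze all coordinates of $a$ except the $j$-th; as $\al_j$ ranges over $\overline{\CC}$, whenever $\al_j$ is finite the center $y_j$ is present and occupies one diagonal slot, so expansion along that row shows that
\[
\det \Ga_{\al,Y}(k) = \Bigl(\al_j - \tfrac{\ii k}{4\pi}\Bigr)\, \det \Ga^{[j]}_{\al,Y}(k) + Q(k),
\]
where the slope $\det \Ga^{[j]}_{\al,Y}(k)$ is the principal minor deleting the $j$-th row and column and $Q(k)$ is independent of $\al_j$. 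The key point is that this minor \emph{is} the determinant of the configuration with $y_j$ removed, i.e. the value at $\al_j=\infty$ under the removal convention (\ref{e:wtAl}). Hence the function $\overline{\CC}\ni\al_j\mapsto\det\Ga_{\al,Y}(k)$ is governed by two numbers — the slope/minor and one finite value — and it vanishes identically on $\overline{\CC}$ as soon as both of these vanish. So the whole task is to read off, from the two hypotheses $k\in\Si(a;Y)$ and $\pa_{\beta_j}D_a(b;k)=0$, that both the minor and one value of $\det\Ga_{\al,Y}(k)$ are zero.

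If $a_j\in\CC$ (so $\beta_j=\al_j$ under the convention of Lemma \ref{l:LocDet}), I would take the finite value to be the one at $\al_j=a_j$, namely $\det\Ga_{\wt a,\wt Y}(k)$, which vanishes because $k\in\Si(a;Y)$. The slope is identified by (\ref{e:pa_alDa}): $\det\Ga^{[j]}_{\wt a,\wt Y}(k)=\pa_{\beta_j}D_a(b;k)=0$ by hypothesis. With both the slope and one value equal to zero, the affine function vanishes for every finite $\al_j$, while its slope — the $\al_j=\infty$ value — is zero as well; thus $k\in\Si(\al;Y)$ for every replacement $\al_j\in\overline{\CC}$.

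If $a_j=\infty$ the two roles are exchanged. Now $a$ already has $y_j$ removed, so $k\in\Si(a;Y)$ says precisely that the minor, i.e. the slope, vanishes: $\det\Ga_{\wt a,\wt Y}(k)=0$. To supply a finite value I would invoke (\ref{e:pa_betaNDa}): for every $c$ it expresses $\pa_{\beta_j}D_a(b;k)$ as the $(n{+}1)\times(n{+}1)$ determinant in which $y_j$ is re-inserted with diagonal entry $c$. Choosing $c=\tfrac{\ii k}{4\pi}-\al_j'$ turns this, up to sign, into $\det\Ga_{\wt a',\wt Y'}(k)$ for the configuration $\al'$ assigning strength $\al_j'$ to $y_j$; since the hypothesis $\pa_{\beta_j}D_a(b;k)=0$ holds for arbitrary $c$, this value is zero for every finite $\al_j'$. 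Together with the vanishing of the slope this again forces the affine function to vanish on all of $\overline{\CC}$. The one subtlety here is the value $\al_j'=0$, which lies outside the local chart $\beta_j=-1/\al_j$; but since the determinant is genuinely affine in $\al_j'$ and already vanishes at infinitely many finite points, it vanishes at $\al_j'=0$ as well.

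The main obstacle I anticipate is bookkeeping rather than conceptual: one must let the two principal-minor formulas (\ref{e:pa_alDa}) and (\ref{e:pa_betaNDa}) play symmetric but opposite roles — in the case $a_j\in\CC$ the hypothesis $\pa_{\beta_j}D_a(b;k)=0$ delivers the minor and $k\in\Si(a;Y)$ delivers a value, whereas for $a_j=\infty$ it is reversed — and one must check that the affine argument reaches \emph{all} of $\overline{\CC}$, not merely the portion visible in the chart $\W$ of Lemma \ref{l:LocDet}. It is precisely this global reach that forces the appeal to the genuine multilinearity of the determinant in place of the local expansion (\ref{e:k_asy}).
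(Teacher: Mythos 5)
Your proof is correct and rests on the same observation as the paper's one‑line argument: the (regularized) determinant is a polynomial of degree at most one in the single strength parameter, so the vanishing of its value and of its first derivative at $b$ forces it to vanish identically in that variable. The only difference is that you spell out the compactification points ($\al_j=\infty$ in the finite chart, $\al_j=0$ in the chart at infinity) via the cofactor expansion and the $c$-independence in (\ref{e:pa_betaNDa}); this is welcome extra precision on points the paper leaves implicit, not a different method.
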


\section{Uniform logarithmic bound on resonances}
\label{s:UnifBound}

Let $N \ge 2$ and $\al \in \CC^N$. Then Lemma \ref{l:asDetGa} and its proof imply
the following 2-side  bound on all resonances $k \in \Si_\res (\al,Y)$: 
\begin{equation}
- \tfrac{N}{q_\nu  - q_{\nu-1} } \ln (|\re k|+1) - c_{2,2} \ \le \ \im k \ \le \ - \tfrac{2}{q_\nu } \ln (|\re k|+1) + c_{1,2},  \label{e:beLogDet}
\end{equation}
where $q_\nu$, $q_{\nu-1}$, $c_{2,2}$, $c_{2,1}$ are  positive constants the depending on $\al$ and $Y$ defined in the proof of Lemma \ref{l:asDetGa}.
The following theorem shows that the upper  bound can be modified in such a way that it becomes uniform 
with respect to $\al \in \overline{\RR}^N$ or $\al \in \overline{\CC}_-^N$.

\begin{thm}%[uniform logarithmic bound] 
\label{t:UniLogBound}
Let $N\ge 2$ and $\A \subset {\overline{\CC}_-} $. Then there exist $c_1 = c_1 (Y)>0$ such that
%, the following inequality holds
%$\Si_\res [\F_\A]$ is a subset of $ z \in  \CC_-  \cup \RR \ : \ $
\begin{equation} \label{e:dr>}
\dr_{\min} (\fr; \F_\A) \ge \frac{2}{N \diam (Y)} \ln (|\fr|+1) - c_{1} 
% , \quad \fr \in (0,+\infty) \cap \re \Si [\F_{\overline{\CC}_-}].
\end{equation}
for all frequencies $\fr>0$ achievable  over $\F_{\A}$.
\end{thm}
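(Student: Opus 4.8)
The plan is to prove the estimate for every individual resonance $k=\fr-\ii\dr\in\Si_\res(\al,Y)$ with $\fr>0$ produced by some $H_\al\in\F_\A$; since the bound in (\ref{e:dr>}) will then hold for each such $\dr$, it passes to the infimum defining $\dr_{\min}(\fr;\F_\A)$. First I would reduce to the genuinely multicenter case. If the number $n(\al)$ of finite components of $\al$ is $\le 1$, then $H_\al$ is $-\De$ (no $\Ga^{-1}$-poles) or a one-center Hamiltonian, and by Example \ref{ex:1center} its single $\Ga^{-1}$-pole $k=-\ii 4\pi\al_1$ has $\re k=4\pi\im\al_1\le 0$; such operators therefore contribute no frequency $\fr>0$. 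Hence it suffices to work with $\wt\al,\wt Y$ where $\wt N:=n(\al)\ge 2$, using the representation $\det\Ga_{\wt\al,\wt Y}=(-4\pi)^{-\wt N}D$ from (\ref{e:det=Sexp}). At the end I will return from $\wt N,\wt Y$ to $N,Y$ via $\wt N\diam(\wt Y)\le N\diam(Y)$, which only sharpens the logarithmic coefficient; as there are finitely many subtuples of $Y$, the additive constant can be chosen uniformly.

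The core of the argument is a uniform comparison of the subleading part of $D$ with its leading term $p_0(z)=\prod_{j=1}^{\wt N}(\ii z-4\pi\al_j)$, and this is where dissipativity enters. Expanding the reduced determinant over permutations gives, up to signs that are irrelevant after taking moduli,
\[
D(z)=\sum_{\sigma}\pm\,\Big(\prod_{\sigma(j)=j}(\ii z-4\pi\al_j)\Big)\,e^{\ii z\,q_\sigma}\prod_{\sigma(j)\neq j}|y_j-y_{\sigma(j)}|^{-1},\qquad q_\sigma:=\sum_{\sigma(j)\neq j}|y_j-y_{\sigma(j)}|,
\]
where the identity permutation yields exactly $p_0$ and $q_\sigma\le\wt N\diam(\wt Y)$. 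The key observation is that at $z=\fr-\ii\dr$ one has $\im(\ii z-4\pi\al_j)=\fr-4\pi\im\al_j\ge\fr$, because $\im\al_j\le 0$ for $\al_j\in\overline{\CC}_-$; consequently $|\ii z-4\pi\al_j|\ge\fr$ for every $j$. This lower bound is the mechanism that makes all subsequent estimates independent of $\al$.

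Since every $\sigma\neq\mathrm{id}$ moves at least two indices, dividing its term by $p_0(z)$ cancels the fixed-point factors and leaves at least two factors $(\ii z-4\pi\al_j)^{-1}$, each bounded in modulus by $\fr^{-1}$. Summing over the finitely many non-identity permutations yields, for $\fr\ge 1$,
\[
\frac{|D(z)-p_0(z)|}{|p_0(z)|}\le\frac{\widehat C(\wt Y)}{\fr^{2}}\,e^{\wt N\diam(\wt Y)\,\dr},\qquad \widehat C(\wt Y):=\sum_{\sigma\neq\mathrm{id}}\ \prod_{\sigma(j)\neq j}|y_j-y_{\sigma(j)}|^{-1},
\]
where $\widehat C(\wt Y)$ depends only on $\wt Y$. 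Thus the possibly large factors carried by $p_0$ are precisely the ones taming the lower-order terms.

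Finally, at a resonance $D(k)=0$ forces $|p_0(k)|=|D(k)-p_0(k)|$, so the last display gives $1\le\widehat C(\wt Y)\,\fr^{-2}\,e^{\wt N\diam(\wt Y)\,\dr}$. Taking logarithms and rearranging,
\[
\dr\ \ge\ \frac{2}{\wt N\diam(\wt Y)}\ln\fr-\frac{\ln\widehat C(\wt Y)}{\wt N\diam(\wt Y)};
\]
using $\ln\fr\ge\ln(\fr+1)-\ln 2$ for $\fr\ge 1$ together with $\wt N\diam(\wt Y)\le N\diam(Y)$ then produces (\ref{e:dr>}) with a constant $c_1=c_1(Y)$ taken as the maximum of the finitely many constants arising from the subtuples of $Y$; for $0<\fr<1$ the right-hand side of (\ref{e:dr>}) is already nonpositive once $c_1\ge\frac{2\ln 2}{N\diam(Y)}$, so the inequality is trivial there. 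I expect the only genuine obstacle to be the uniform domination of the subleading permutation terms over all admissible $\al$; the dissipativity estimate $|\ii z-4\pi\al_j|\ge\fr=\re k$ resolves it by reducing the comparison to the $\al$-independent geometric quantity $\widehat C(\wt Y)$.
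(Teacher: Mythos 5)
Your proof is correct, and its engine is the same as the paper's: dissipativity ($\im \al_j \le 0$) yields a lower bound on the linear factors $\ii z - 4\pi\al_j$ of the leading term $p_0$ that is \emph{uniform in} $\al$, and this forces $|p_0|$ to dominate the exponential lower-order terms of the Leibniz expansion unless $\dr$ is at least logarithmically large in $\fr$. The differences are organizational but real. Where you bound $|\ii z-4\pi\al_j|\ge \re z=\fr$ directly on the line $\re z = \fr$ and evaluate at the resonance itself, the paper (Step~2) works with the grouped representation (\ref{e:det=SExpAl}), proves the analogous bound (\ref{e:|z-al|>}) inside the logarithmic region $\Om_3$, and concludes that $\wt D$ has no zeros on $\Om_3\times \A_0^N$ --- a zero-free-region statement equivalent to your pointwise contradiction, though your version of the key inequality is cleaner and needs no prior restriction to $\Om_3$. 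More substantively, you dispose of the components $\al_j=\infty$ at the outset by passing to the reduced tuple $(\wt\al,\wt Y)$ and maximizing the constant over the finitely many subtuples of $Y$ with at least two elements, whereas the paper keeps $N$ fixed and extends the zero-free region from $(\CC_-\cup\RR)^N$ to $\overline{\CC}_-^N$ via the perturbation formula (\ref{e:k_asy}) and Lemma \ref{l:LocDet} (Step~3); your reduction is more elementary and is legitimate because $\det\Ga_{\al,Y}:=\det\Ga_{\wt\al,\wt Y}$ by definition and because, as you note via Example \ref{ex:1center}, tuples with $n(\al)\le 1$ contribute no resonance with $\re k>0$. The only step worth making explicit is that $|p_0(k)|\ge \fr^{\,n(\al)}>0$, so the division by $p_0$ and the identity $|p_0(k)|=|D(k)-p_0(k)|$ are licit; this is already implicit in your estimate.
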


\begin{proof}
\emph{Step 1.} As a function in $z$ and $\al\in \CC^N$, $\det \Ga_{\al,Y} (z)$ has the following representation 
\begin{equation} \label{e:det=SExpAl}
\det \Ga_{\al,Y} (z) = (-4\pi)^{-N} \wt D (\al,z) , \quad \wt D(\al,z) = \sum_{l=0}^\eta P_l (\al,z) e^{\ii z Q_l} ,
\end{equation}
which is unique if we assume that  the numbers $\eta = \eta (Y) \in \NN \cup \{0\}$, $Q_l = Q_l (Y) $,  and the nontrivial polynomials $P_l $ in $z$ and $\al_j$ (with coefficient depending on $Y$) are such that $0 = Q_0 < Q_1 < \dots < Q_\eta$.
In this case, one sees that $P_0 (\al,z) =  \prod_{j=1}^N  (\ii z - 4 \pi \al_j )$
 and 
\begin{gather} \label{e:QeQl}
 \deg P_l \le N-2  \text{ for all } 1 \le l \le \eta. 
\end{gather}
In the same way as in the proof of Lemma \ref{l:asDetGa}, the assumption $N\ge 2$ implies $\eta\ge 1$. 
%(It is not clear if $\nu$ and $q_l$ of
%Lemma \ref{l:asDetGa} coincide with $\eta$ and $Q_l$, resp., since one have to take into %account the possibility that $P_l (\al,\cdot) \equiv 0$ for certain $\al$ and $l$.)

\emph{Step 2.} Consider the case $\A = \A_0 :=\CC_- \cup \RR$ and $\al \in \A_0^N$. Then all $\al_j$ are finite and we can use (\ref{e:det=SExpAl}).
 Denote $ P_{\min} (\al,z) := \min_{j\neq j'} |(\ii z - 4 \pi \al_j )(\ii z - 4 \pi \al_{j'} )|$.
It is easy to see that there exists $C_5 = C_5 (Q_\eta)>0$ such that for any $c_1 \ge C_5$ in the region 
\[
\Om_3   := \{ \ \im z \le 0, \ \re z>0 \ , \ \im z > -\frac{2}{Q_\eta} \ln (\re z +1) + c_1 \}
\]
the inequalities 
\begin{equation} \label{e:|z-al|>}
|\ii z - 4\pi \al_j| \ge \tfrac{|z|+1}{4} + |\al_j| \ge \tfrac{1}{4}
\end{equation}
hold for all $\al_j \in \A_0 $.
Hence, for all 
$(z,\al) \in \Om_3  \times \A_0$, we have  
\begin{gather} \label{e:Pmin>}
\text{
$
P_0 (\al,z) \neq 0$, \qquad 
$P_{\min} (z,\al) > (|z|+1)^2/16$,} \quad \text{ and  } \\
\notag
|\wt D| \frac{P_{\min} }{|P_0 |} \ge P_{\min} - \sum_{l=1}^\eta \frac{P_{\min} |P_l| }{|P_0|} e^{- Q_\eta \im z } \ge \frac{(|z|+1)^2}{16} - 4^{N-2} (N!-1) e^{- Q_\eta \im z } .
 \end{gather}
The last inequality follows from (\ref{e:|z-al|>}),  (\ref{e:QeQl}), and 
the Leibniz formula for the determinant $\det \Ga_{\al,Y} $. Choosing 
$c_1$ large enough, one can ensure that $|\wt D| \frac{P_{\min} }{|P_0 |}$ (and so $\wt D$)
have no zeros in $\Om_3 \times \A_0^N$ .

\emph{Step 3.} From Step 2, the perturbation formula (\ref{e:k_asy}), and Lemma \ref{l:LocDet},  one sees
that $\det  \Ga_{\al,Y} (z)$ has no zeros in the larger set $\Om_3 \times \A^N$ with $\A= \overline{\CC}_-$. 
%Indeed, if $\det  \Ga_{a,Y} (k) =0$ for certain $a \in \A^N \setminus \A_0^N$, then perturbing all infinite $a_j \in \A $ we can produce resonances $\kappa_j (\zeta) $ arbitrary
%close to $k$. Since, by Step 2, $\kappa_j (\zeta) \not \in \Om_3$, we see that $k \not \in \Om_3$.
%\emph{Step 4.} 
Now, the statement of theorem follows from the obvious estimate 
$Q_\eta \le N \diam (Y)$.
\end{proof}

\begin{rem} \label{r:BoundOverR}
When $\A = \overline{\RR} $, the estimate (\ref{e:dr>}) is valid for all frequencies $\fr$ that are achievable  over $\F_{\overline{\RR}}$.  Indeed, 
(\ref{e:Ga*}) and Example \ref{ex:1center} imply that
\begin{gather} \label{e:SiSym}
\text{ $\Si (\al,Y)$ is symmetric w.r.t. $\ii \RR$ for $ \al \in \overline{\RR}^N $ (including  multiplicities),}\\
 \text{$\dr_{\min} (\fr;\F_{\overline{\RR}})$ is an even function, and  
$\dr_{\min} (0;\F_{\overline{\RR}}) = 0$}.  \label{e:Dr_Min_aprR}
\end{gather}
\end{rem}

Note also that it follows from (\ref{e:SiSym}) that 
%\begin{equation} \label{e:Si_FR_Sym} \text{
$\Si [\F_{\overline{\RR}}]$ and $\Si_\res [\F_{\overline{\RR}}]$ 
are symmetric w.r.t. $\ii \RR$, 
%} \end{equation}
and that 
$\Si [\F_{\overline{\RR}}] = \Si_\res [\F_{\overline{\RR}}] \cup \ii [0,+\infty)$ .
To see the last equality, it suffices to notice that the m-accretivity statement of Proposition \ref{p:Res} implies
\begin{equation} \label{e:nospectrum}
\emptyset = \CC_{\mathrm{I}} \cap \Si [\F_{\overline{\CC}_-}]  = 
(\CC_{\mathrm{I}} \cup \CC_{\mathrm{II}})
 \cap \Si [\F_{\overline{\RR}}]  
\end{equation} 
and that 
$\Si [\F_{\overline{\RR}}]$ contains the set $\ii \RR$ produced by the case $N=1$ of Example \ref{ex:1center} (i).
Similarly, $\Si [\F_{\overline{\CC}_-}]$ contains the set 
$ \ii \CC_+ \cup \ii \RR$ of Example \ref{ex:1center} (ii) .
For the minimal decay function $\dr_{\min}$ over $\F_{\overline{\CC}_-}$ 
one has 
%\begin{gather} \label{e:Dr_Min_aprC} \text{
$\dr_{\min} (\fr;\F_{\overline{\CC}_-}) = 0$ for all $\fr \le 0$.
%} \end{gather}

\section{Extremal resonances over $\A = \overline{\RR} $ and $\A= \overline{\CC}_-$}
\label{s:NesCond}
%\subsection{Necessary conditions on extremizers}
%Let the finite set $Y$ of $N$ feasible centers of point interactions 
%be fixed and known. 

We study first the boundary $\Bd \Si [\F_{\overline{\CC}_-} ]$, and  then from this study 
obtain results on resonances of minimal decay over $\F_{\overline{\RR}} $.
The idea behind this is that there are more possible perturbations 
of the parameter tuple $a$ inside of $\overline{\CC}_-^N$, than
in the case $a \in \overline{\RR}^N$. So the restrictions on the possible perturbations of 
$k$ over $\F_{\overline{\CC}_-}$ are stronger. However, it occurs that for every  
$k \in \Bd \Si [\F_{\overline{\CC}_-} ] $, there exists $a \in \overline{\RR}^N$ 
that generates $k$ in the sense that $k \in \Si_\res (a,Y)$. As a result the resonances of minimal decay over  $\F_{\overline{\RR}}$ 
inherit the stronger necessary conditions of extremity derived for $\A =\overline{\CC}_-$.

In the simplest form, our main abstract result states that if 
$a \in  \RR^N$ generates the resonance $k$ of minimal decay over $\F_{\overline{\RR}}$, 
then there exists $\xi \in [-\pi, \pi)$
such that 
\begin{equation} \label{e:simpleNesCond}
\text{the ray $e^{\ii \xi} [0,+\infty) $ contains all first minors $\det \Ga^{[i]}_{a, Y} (k)$ (see Theorem \ref{t:pa_in_exi_R})}.
\end{equation} 
(Note that the ray $e^{\ii \xi} [0,+\infty)  \subset \CC$ includes its vertex at $0$.)

To formulate the result in the general form that includes the possibility of 
$a_j = \infty$ for some $j$ and the case $\A= \overline{\CC}_-$,
we take the convention of Section \ref{s:Da}, 
which assumes that the centers $y_j$ are enumerated 
in such a way that $a_j \neq \infty $ for $1 \le j \le n$ and $a_j =\infty$ for $n< j \le N$, and use 
the regularized determinant $D = D_a$ defined by (\ref{e:Da}) and depending on the modified parameters 
$\beta_j$.

\begin{thm} \label{t:pa_in_exi}
%Let $\A= \overline{\CC}_-$ and $S=\A^N$.
Assume that $a \in \overline{\CC}_-^N $ and $k \in \Si (a,Y)$ are such that 
$k \in \Bd \Si [\F_{\overline{\CC}_-}] $. Then the following statements hold:
\item[(i)] There exists $\xi \in [-\pi, \pi)$ such that
\begin{equation} \label{e:pa_in_exi}
\text{$\pa_{\beta_j} D (b;k)$ belongs to the ray $e^{\ii \xi} [0,+\infty)  $ for all $j=1,\dots, N$.}
\end{equation}
\item[(ii)] If $a_i \in \CC_-$ for some $i$, 
then $\pa_{\beta_i} D (b;k) = 0$ and 
\begin{equation} \label{e:pa=)a'}
\text{$k \in \Si (a',Y)$ for $a'$ defined by 
$a'_j = \left\{ \begin{array}{cc} c , & j=i \\ a_j, & j \neq i \end{array} \right.  $
with arbitrary $c \in \overline{\CC}_-$.}
\end{equation}
\item[(iii)] There exists $a' \in \overline{\RR}^N$ such that $k \in \Si (a',Y)$ 
and $n (a') = n_{\min} (k;\F_{\overline{\CC}_-})$. (Recall that $n_{\min}$ is defined by (\ref{e:nmin}).)
\end{thm}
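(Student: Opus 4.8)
The plan is to run a first-order Pareto/perturbation analysis on the regularized determinant $D = D_a$ of Lemma~\ref{l:LocDet}, expressed through the modified parameters $\beta_j$. First I would identify the feasible tangent cone at $b = (a_1,\dots,a_n,0,\dots,0)$: writing a perturbation of $\beta_j$ as $w_j$, the feasibility constraint $\al \in \overline{\CC}_-^N$ forces $w_j \in \mathcal{K}_j$, where $\mathcal{K}_j = \CC$ when $a_j \in \CC_-$ (an interior point), while $\mathcal{K}_j = \overline{\CC}_-$ when $a_j \in \RR$ or $a_j = \infty$; the latter, for $a_j=\infty$, follows from $\beta_j = -1/\al_j$ mapping $\overline{\CC}_-$ onto a neighborhood of $0$ in $\overline{\CC}_-$. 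Set $P_j := \pa_{\beta_j} D(b;k)$ (the first minors of (\ref{e:pa_alDa})--(\ref{e:pa_betaNDa})), let $L(w) := \sum_{j=1}^N P_j w_j$ be the induced $\CC$-linear functional, and let $T := L[\mathcal{K}] = \sum_j P_j \mathcal{K}_j$ be its image over the product cone $\mathcal{K} = \prod_j \mathcal{K}_j$; this $T$ is a convex cone in $\CC$ that governs the admissible first-order motion of $k$.

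The crux is the dichotomy: \emph{if $T = \CC$ then $k \in \Int \Si[\F_{\overline{\CC}_-}]$}, which contradicts $k \in \Bd \Si[\F_{\overline{\CC}_-}]$ and therefore forces $T \ne \CC$. To prove the crux I would solve $D_a(b+w,k+s) = 0$ for feasible $w$ as a function of a prescribed small displacement $s$. Since $k$ is an $m$-fold zero in $z$, the expansion collapses the leading balance to $L(w) = A s^{m} + (\text{higher order})$, with $A := -\tfrac{1}{m!}\pa_z^m D(b;k) \ne 0$; when $T = \CC$ the target $A s^m$ lies in $T$ for every $s$, so a bounded right-inverse of $L$ on $\mathcal{K}$ together with a contraction (fixed-point) argument produces a feasible $w = w(s) \in \mathcal{K}$ solving the equation for all small $s$. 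Hence every $k+s$ is an achievable $\Ga^{-1}$-pole and $k$ is interior. Equivalently, one may feed the one-parameter formula (\ref{e:k_asy}): as the admissible path direction varies so that $C_{\ga,1}$ ranges over all of $\CC$, the curves $\zeta \mapsto \kappa_j(\zeta)$ sweep out a full neighborhood of $k$ (this works for every multiplicity, since the $m$-th roots of all of $\CC$ exhaust $\CC$). I expect \textbf{this realizability step to be the main obstacle}: the selection $w \mapsto L(w)$ is only real-linear and the cone $\mathcal{K}$ is non-smooth at $b$, so the fixed-point/degree bookkeeping, and the uniformity in $s$ when $m \ge 2$, must be carried out with some care.

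Granting $T \ne \CC$, statements (i) and (ii) fall out from the cone structure. Each index with $a_i \in \CC_-$ contributes $P_i \mathcal{K}_i = P_i\,\CC$, which equals $\CC$ unless $P_i = 0$; since $T \ne \CC$, every such minor must vanish, $\pa_{\beta_i} D(b;k) = 0$, and Lemma~\ref{l:pa=0} then permits replacing $a_i$ by an arbitrary $c \in \overline{\CC}_-$ while keeping $k \in \Si(a',Y)$ --- this is (ii). The surviving contributions are the half-planes $P_j\,\overline{\CC}_-$ coming from the indices with $a_j \in \RR$ or $a_j = \infty$, and $T$ is the convex cone they generate. Two \emph{distinct} closed half-planes through the origin already generate all of $\CC$, so $T \ne \CC$ forces all nonzero such $P_j$ to define the \emph{same} half-plane, i.e.\ to share a common argument $\xi$; together with the vanishing $\CC_-$-minors this places every $P_j$ on the ray $e^{\ii\xi}[0,+\infty)$ (the zero minors sitting at its vertex), which is (i).

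Finally, for (iii) I would not use the given $a$ but instead choose $a'$ attaining the minimum in $n_{\min}(k;\F_{\overline{\CC}_-})$ with $k \in \Si(a',Y)$ and $a' \in \overline{\CC}_-^N$ (the minimum is over a nonempty set of nonnegative integers, hence attained). The hypotheses of the theorem hold for $a'$ as well, since $k \in \Bd \Si[\F_{\overline{\CC}_-}]$ is a property of $k$ alone, so (ii) applies to it: if some component $a'_i$ lay in $\CC_-$, then $\pa_{\beta_i} D(b';k)=0$ and Lemma~\ref{l:pa=0} would let me send $a'_i \to \infty$, keeping $k$ a $\Ga^{-1}$-pole while decreasing $n(a')$ by one --- contradicting the minimality of $n_{\min}$. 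Hence $a'$ has no component in $\CC_-$, so all its finite components are real; that is, $a' \in \overline{\RR}^N$ with $n(a') = n_{\min}(k;\F_{\overline{\CC}_-})$, as claimed.
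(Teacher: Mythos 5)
Your proposal is correct and follows essentially the same route as the paper: pass to the $\beta$-coordinates and the regularized determinant, use the dichotomy ``if the cone of feasible directional derivatives is all of $\CC$ then $k$ is interior'' (the paper's Lemma \ref{l:intSiD}), read off (i) and (ii) from the resulting half-plane constraint, and get (iii) by applying (ii) to a tuple realizing $n_{\min}$. The realizability step you flag as the main obstacle is precisely what the paper does not reprove either --- it is imported wholesale from \cite[Theorem 4.1 and Proposition 4.2]{Ka14} --- so your sketch of it via (\ref{e:k_asy}) and a fixed-point/degree argument is consistent with, and no less complete than, the paper's own treatment.
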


The proof is given in Section  \ref{s:Proof_pa_in_exi}. Note that
Statement (iii) and Theorem \ref{t:SAclosed} imply
\begin{equation} \label{e:BdSsubsetBdS}
\Bd \Si [\F_{\overline{\CC}_-}] \subset \Bd \Si [\F_{\overline{\RR}}] \subset 
\Si [\F_{\overline{\RR}}] 
\subset \Si [\F_{\overline{\CC}_-}].
\end{equation}
On the other hand, the m-accretivity statement of Proposition \ref{p:Res} implies 
%\begin{equation} \label{e:SdissCI}
$\CC_{\mathrm{I}} \cap \Si [\F_{\overline{\CC}_-}] = \emptyset $. 
%\end{equation}
Combining this with (\ref{e:BdSsubsetBdS}), we obtain the next corollary.

\begin{cor}
%[cf.  (\ref{e:Dr_Min_aprC}) and (\ref{e:Dr_Min_aprR})] 
\label{c:AC_AR}
Let $\fr \ge 0$. Then
the frequency $\fr$ is achievable  over $\F_{\overline{\CC}_-} $ exactly when 
it is achievable  over $\F_{\overline{\RR}}$. For such frequencies $\fr$, one has 
$
\dr_{\min} (\fr;\overline{\RR}) = \dr_{\min} (\fr;\overline{\CC}_-).
$ 
\qed
\end{cor}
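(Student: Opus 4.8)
The plan is to deduce the corollary by combining the chain of inclusions (\ref{e:BdSsubsetBdS}) with the existence of minimizers (Theorem \ref{t:SAclosed}) and the emptiness of the first-quadrant part of the achievable set recorded in (\ref{e:nospectrum}). Throughout I would use the identity $\Si_\res [\F] = \Si [\F] \cap (\CC_- \cup \RR)$, which is immediate from (\ref{e:SiRes}).

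First I would dispose of the easy direction. The inclusion $\Si [\F_{\overline{\RR}}] \subset \Si [\F_{\overline{\CC}_-}]$ from (\ref{e:BdSsubsetBdS}) intersected with $\CC_- \cup \RR$ gives $\Si_\res [\F_{\overline{\RR}}] \subset \Si_\res [\F_{\overline{\CC}_-}]$. Hence every frequency achievable over $\F_{\overline{\RR}}$ is achievable over $\F_{\overline{\CC}_-}$, and, since the infimum defining $\dr_{\min}$ is then taken over a larger set, $\dr_{\min}(\fr;\overline{\CC}_-) \le \dr_{\min}(\fr;\overline{\RR})$ whenever the right-hand side is defined.

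The substance of the proof is the reverse direction. Assume $\fr \ge 0$ is achievable over $\F_{\overline{\CC}_-}$ and set $\dr^* := \dr_{\min}(\fr;\overline{\CC}_-)$. Because $\overline{\CC}_-^N$ is closed (indeed compact) in $(\overline{\CC}^N, \rho_{\overline{\CC}^N})$, Theorem \ref{t:SAclosed} guarantees that the infimum is attained, so that $k := \fr - \ii \dr^*$ lies in $\Si_\res [\F_{\overline{\CC}_-}] \subset \Si [\F_{\overline{\CC}_-}]$. The key claim I would establish is that $k \in \Bd \Si [\F_{\overline{\CC}_-}]$. Granting this, (\ref{e:BdSsubsetBdS}) places $k$ in $\Si [\F_{\overline{\RR}}]$; since $\im k = -\dr^* \le 0$ we get $k \in \Si_\res [\F_{\overline{\RR}}]$, whence $\fr$ is achievable over $\F_{\overline{\RR}}$ and $\dr_{\min}(\fr;\overline{\RR}) \le \dr^*$. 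Combined with the easy inequality, this yields both the achievability equivalence and the asserted equality.

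To prove the key claim I would exhibit a sequence converging to $k$ but lying outside $\Si [\F_{\overline{\CC}_-}]$; the main obstacle is that the correct approach direction depends on the value of $\dr^*$. If $\dr^* > 0$, I would approach $k$ from above along the vertical segment, using $\fr - \ii \dr_n$ with $\dr_n \uparrow \dr^*$ and $\dr_n \in (0,\dr^*)$: these points lie in $\CC_-$, and by the definition of $\dr^*$ as the minimal decay they are not in $\Si_\res [\F_{\overline{\CC}_-}]$, hence, being in $\CC_-$, not in $\Si [\F_{\overline{\CC}_-}]$. If $\dr^* = 0$, so $k = \fr \in [0,+\infty)$, the vertical-segment argument is vacuous and one cannot approach along $\ii \RR$, since the imaginary axis itself belongs to $\Si [\F_{\overline{\CC}_-}]$ by Example \ref{ex:1center}(ii); instead I would approach through the open first quadrant, for instance along $k + \epsilon_n e^{\ii \pi/4}$ with $\epsilon_n \downarrow 0$, and invoke the m-accretivity identity (\ref{e:nospectrum}), $\CC_{\mathrm{I}} \cap \Si [\F_{\overline{\CC}_-}] = \emptyset$, to conclude that these points avoid $\Si [\F_{\overline{\CC}_-}]$. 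In both cases $k$ is a limit of non-elements while $k$ itself lies in $\Si [\F_{\overline{\CC}_-}]$, so $k \in \Bd \Si [\F_{\overline{\CC}_-}]$, which completes the argument.
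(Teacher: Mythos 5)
Your proposal is correct and follows essentially the same route the paper intends: the corollary is stated there as an immediate consequence of combining the inclusion chain (\ref{e:BdSsubsetBdS}) with the identity $\CC_{\mathrm{I}} \cap \Si [\F_{\overline{\CC}_-}] = \emptyset$ from (\ref{e:nospectrum}), together with the attainment of the minimal decay rate via Theorem \ref{t:SAclosed}. Your two-case argument (vertical approach for $\dr^*>0$, first-quadrant approach for $\dr^*=0$) correctly supplies the details showing the extremal resonance lies on $\Bd \Si [\F_{\overline{\CC}_-}]$, which is exactly the step the paper leaves implicit.
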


With the use of Corollary \ref{c:AC_AR} and Theorem \ref{t:pa_in_exi}  
we will prove  in Section  \ref{s:Proof_pa_in_exi} the following necessary conditions 
over $\F_{\overline{\RR}}$.

\begin{thm} \label{t:pa_in_exi_R}
%Let $\A= \overline{\RR}^N$. Then the following statements hold:
(i) If $k$ and $a \in \overline{\RR}^N$ are of minimal decay (over $\F_{\overline{\RR}}$) 
for the frequency $\re k $, then there exists $\xi \in [-\pi, \pi)$ such that
(\ref{e:pa_in_exi}) hold. 
\item[(ii)] If $a \in \overline{\RR}^N $ and $k \in \Si_\res (a,Y)$ are such that 
$k \in \Bd \Si_\res [\F_{\overline{\RR}}] $, then there exists $\xi \in [-\pi, \pi)$ such that
%\begin{equation} \label{e:pa_in_exiR} \text{
$\pa_{\beta_j} D (b;k)$ belongs to the line $e^{\ii \xi} \RR $ for all $j=1,\dots, N$.
%} \end{equation}
\end{thm}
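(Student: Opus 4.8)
The plan is to derive Theorem \ref{t:pa_in_exi_R} as a corollary of the already-established Theorem \ref{t:pa_in_exi}, exploiting the inclusions \eqref{e:BdSsubsetBdS} and the accretivity-based equality \eqref{e:nospectrum}. The key observation is that statements over $\F_{\overline{\RR}}$ should inherit the stronger necessary conditions proved over $\F_{\overline{\CC}_-}$, as announced in the introductory discussion of Section \ref{s:NesCond}. For part (i), I would first show that a resonance $k$ of minimal decay over $\F_{\overline{\RR}}$ must in fact lie on $\Bd \Si [\F_{\overline{\CC}_-}]$. The argument is: if $a \in \overline{\RR}^N$ produces $k = \fr - \ii \dr_{\min}(\fr)$, then $k \in \Si[\F_{\overline{\RR}}] \subset \Si[\F_{\overline{\CC}_-}]$ by \eqref{e:BdSsubsetBdS}. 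If $k$ were an \emph{interior} point of $\Si[\F_{\overline{\CC}_-}]$, then a whole neighborhood $\DD_\de(k)$ would be contained in $\Si[\F_{\overline{\CC}_-}]$; by Corollary \ref{c:AC_AR} the minimal decay over $\overline{\CC}_-$ and over $\overline{\RR}$ agree for achievable frequencies $\fr \ge 0$, so points directly below $k$ with smaller decay would also be achievable, contradicting the minimality of $\dr_{\min}(\fr)$. Hence $k \in \Bd \Si[\F_{\overline{\CC}_-}]$, and Theorem \ref{t:pa_in_exi}(i) applies verbatim to give the ray condition \eqref{e:pa_in_exi}.

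For part (ii), the hypothesis is weaker — merely $k \in \Bd \Si_\res[\F_{\overline{\RR}}]$ rather than minimal decay — and correspondingly the conclusion is weaker, namely membership in a full \emph{line} $e^{\ii \xi}\RR$ rather than a ray. Here I would again try to transfer the boundary point to the $\overline{\CC}_-$ setting. The subtlety is that $\Bd \Si_\res[\F_{\overline{\RR}}]$ includes the portion of the boundary that need not be closest to $\RR$, so $k$ might be approachable from above within $\Si[\F_{\overline{\CC}_-}]$ but bounded away from below, or vice versa. The natural route is to observe that $k \in \Bd \Si_\res[\F_{\overline{\RR}}]$ together with the symmetry \eqref{e:SiSym} of $\Si[\F_{\overline{\RR}}]$ with respect to $\ii\RR$ and the inclusion $\Si_\res[\F_{\overline{\RR}}] \subset \Si[\F_{\overline{\CC}_-}]$ forces $k \in \Bd \Si[\F_{\overline{\CC}_-}]$ as well, whence Theorem \ref{t:pa_in_exi}(i) yields a ray condition, which a fortiori gives the line condition. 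If instead $k$ is an interior point of $\Si[\F_{\overline{\CC}_-}]$, I would argue directly via the perturbation formula \eqref{e:k_asy}: one shows that the directional derivatives $\pa_{\beta_j} D(b;k)$ cannot point into a common open half-plane, for otherwise a suitable real perturbation of $a$ (using that for $a \in \overline{\RR}^N$ the admissible $\beta_j$-increments span a real one-dimensional direction) would move $k$ off the boundary on both sides, contradicting $k \in \Bd \Si_\res[\F_{\overline{\RR}}]$.

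The main obstacle I anticipate is the case distinction in part (ii) between $k$ being interior versus boundary for $\F_{\overline{\CC}_-}$, and more precisely handling the real-parameter constraint correctly. When $a \in \overline{\RR}^N$, a perturbation $\al_j \mapsto \al_j + t$ with $t \in \RR$ keeps us inside $\overline{\RR}^N$ only for real $t$, so the achievable first-order displacement of $k$ is $\kappa_j(\zeta) - k \approx C_{\ga,1}\zeta$ with $\zeta$ real and $C_{\ga,1}$ proportional to $\pa_{\beta_j} D(b;k)$. A boundary point of $\Si_\res[\F_{\overline{\RR}}]$ must therefore be such that no convex combination of the vectors $\{\pm \pa_{\beta_j} D(b;k)\}$ spans a neighborhood of $0$ in $\CC \cong \RR^2$; since both signs are available (real $t$ of either sign), this is exactly the statement that all $\pa_{\beta_j} D(b;k)$ lie on a common line through the origin. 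I would present this as a support-functional / separating-hyperplane argument: the boundary condition supplies a direction $e^{\ii(\xi + \pi/2)}$ orthogonal to which all achievable first-order displacements must vanish, forcing $\pa_{\beta_j} D(b;k) \in e^{\ii\xi}\RR$. Care is needed with the multiplicity-$m > 1$ case, where the $\zeta^{1/m}$ branching in \eqref{e:k_asy} can in principle move $k$ in all directions even from a one-sided real perturbation; I would reduce to the simple argument by noting that if any $\pa_{\beta_j}D(b;k) \neq 0$ and $k$ had multiplicity exceeding one, the fractional-power asymptotics would immediately place achievable resonances on both sides, so either all derivatives vanish (line condition trivial) or one reduces to the $m=1$ support-line reasoning.
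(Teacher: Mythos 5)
Your overall strategy---deducing part (i) from Theorem \ref{t:pa_in_exi}(i) by placing $k$ on $\Bd\Si[\F_{\overline{\CC}_-}]$, and proving part (ii) by a direct half-plane argument with real perturbations---is the paper's route, and your fallback argument for (ii) is in substance the paper's actual proof: one applies Lemma \ref{l:intSiD} with $S=\overline{\RR}^N$, notes that $v=(c\,\de_{ji})_{i=1}^N$ lies in $\wt S-b$ for \emph{every} real $c$ whenever $b_j\in\RR$, so both $\pm\pa_{\beta_j}D(b;k)$ must lie in the separating closed half-plane $e^{\ii\xi}(\CC_+\cup\RR)$ and hence on its boundary line $e^{\ii\xi}\RR$. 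The multiplicity issue you worry about at the end is already absorbed into Lemma \ref{l:intSiD} (via \cite[Theorem 4.1 and Proposition 4.2]{Ka14}), so no separate analysis of the $\zeta^{1/m}$ branching is required. Two points need repair, however. First, in part (i) your interior-point contradiction uses Corollary \ref{c:AC_AR}, which is stated only for $\fr\ge 0$; for $\fr<0$ one has $\dr_{\min}(\fr;\F_{\overline{\CC}_-})=0$ while $\dr_{\min}(\fr;\F_{\overline{\RR}})$ is in general positive, so a resonance of minimal decay over $\F_{\overline{\RR}}$ with negative frequency need not lie on $\Bd\Si[\F_{\overline{\CC}_-}]$ and your argument breaks there. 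The paper closes this case with the symmetry (\ref{e:SiSym}): replace $k$ by $-\overline{k}$ (with the same $a$), use (\ref{e:Ga*}) to see that the first minors are complex-conjugated, and transport the ray condition back with $\xi\mapsto-\xi$.

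Second, the ``natural route'' you propose for part (ii)---that $k\in\Bd\Si_\res[\F_{\overline{\RR}}]$ together with (\ref{e:SiSym}) forces $k\in\Bd\Si[\F_{\overline{\CC}_-}]$---is not correct in general: $\Si[\F_{\overline{\CC}_-}]$ is much larger than $\Si_\res[\F_{\overline{\RR}}]$, and boundary points of the latter that are not of minimal decay (for instance on the lower edge of the achievable set, or even on the upper edge when $\fr<0$) are typically interior to $\Si[\F_{\overline{\CC}_-}]$. Indeed, if this transfer were valid you would obtain the ray condition (\ref{e:pa_in_exi}) on all of $\Bd\Si_\res[\F_{\overline{\RR}}]$, which is precisely the stronger conclusion that statement (ii) deliberately does not claim; the weakening from ray to line is exactly the price of losing the complex perturbation directions. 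So the first branch of your case distinction in (ii) should be discarded and the fallback promoted to the main (and only) argument; with that, and the symmetry fix in (i), your proposal coincides with the paper's proof.
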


\subsection{Proofs of Theorems \ref{t:pa_in_exi} and \ref{t:pa_in_exi_R}}
\label{s:Proof_pa_in_exi}

Recall that by $\cone W$ we denote the nonnegative convex cone generated by a subset $W$ of a linear space. Let $S=\overline{\CC}_-^N$ or $S=\overline{\RR}^N$, and let $\F$ be defined by (\ref{e:F}).

Let $D (\beta;z) = D_a (\beta;z)$ and $b \in \CC^N$ be defined as in Section \ref{s:Da}.
The change from $\alpha$-coordinates to $\beta$-coordinates of Section \ref{s:Da} maps 
$S$ onto $S$.
Let $\wt S := S \cap \CC^N$ (this excludes all infinite points).
We would like to consider $\beta \in \wt S $
and the sets $\Si_D (\beta) $ of zeroes of $D (\beta, \cdot)$ generated by such $\beta$.
Obviously, 
\begin{equation} \label{e:SiDWinSi}
\text{the set of all such zeros, $\Si_D [\wt S ] := \bigcup_{\beta \in \wt S } \Si_D (\beta)$,
is a subset of $\Si [\F]$.}
\end{equation}

For $v \in \CC^N$, let us consider the directional derivatives 
$\frac{\pa D (b;z)}{\pa \beta} (v)  := \pa_\zeta D (b+v\zeta;z)$, where $\zeta \in \RR$.
Put $\frac{\pa D (b;k)}{\pa \beta} [\wt S - b] := 
\{ \frac{\pa D (b;z)}{\pa \beta} (v) \ : \ v \in \wt S - b \}$.
When $v$ belongs to the convex  set $\wt S - b $, the linear perturbations $b+v\zeta$
for $\zeta \in [0,1]$ remain in $\wt S$. Hence, \cite[Theorem 4.1 and Proposition 4.2]{Ka14} imply 
that if $\cone \frac{\pa D (b;k)}{\pa \beta} [\wt S - b] = \CC$, then 
$k$ is an interior point of $\Si_D [\wt S ]$. Taking (\ref{e:SiDWinSi}) into account
we obtain our main technical lemma.

\begin{lem} \label{l:intSiD}
If $k \in \Bd \Si [\F]$, then there exists a closed half-plane $e^{\ii \xi} (\CC_+ \cup \RR) $ that 
contains $\cone \frac{\pa D (b;k)}{\pa \beta} [\wt S - b]$. \qed
\end{lem}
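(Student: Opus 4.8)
The plan is to translate the geometric boundary condition $k \in \Bd \Si[\F]$ into a statement about the directional derivatives of the regularized determinant $D$, using the perturbation machinery already in place. The key observation is the contrapositive of the abstract openness result: if the directional derivatives in all feasible directions were to span $\CC$ as a convex cone, then $k$ would be interior to the achievable set, contradicting $k \in \Bd \Si[\F]$. So the whole statement reduces to invoking \cite[Theorem 4.1 and Proposition 4.2]{Ka14} to conclude that $\cone \frac{\pa D (b;k)}{\pa \beta} [\wt S - b] \neq \CC$, and then applying an elementary convexity fact: a convex cone in $\CC \cong \RR^2$ that is not all of $\CC$ is contained in some closed half-plane through the origin, i.e.\ in a set of the form $e^{\ii \xi} (\CC_+ \cup \RR)$.

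First I would fix the notation established just above the statement: $D = D_a$, the base point $b = (a_1, \dots, a_n, 0, \dots, 0)$, the feasible $\beta$-region $\wt S = S \cap \CC^N$ (convex, since $S$ is $\overline{\CC}_-^N$ or $\overline{\RR}^N$ and the $\beta$-reparametrization maps $S$ to $S$), and the directional derivative $\frac{\pa D(b;z)}{\pa \beta}(v) = \pa_\zeta D(b + v\zeta; z)$. The crucial structural input is that for each fixed $v \in \wt S - b$, the segment $\{b + v\zeta : \zeta \in [0,1]\}$ stays inside $\wt S$ by convexity, so these are genuine feasible one-parameter perturbations. This is exactly the setup where the cited results from \cite{Ka14} apply: the first-order displacement of the $m$-fold zero $k$ is governed by the asymptotics (\ref{e:k_asy}), whose leading coefficient $C_{\ga,1}$ is proportional to $\pa_\zeta D_a(\ga(\zeta),k)|_{\zeta=0} = \frac{\pa D(b;k)}{\pa \beta}(v)$.

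The heart of the argument is the dichotomy furnished by \cite[Theorem 4.1, Proposition 4.2]{Ka14}: if the collection of achievable first-order displacement directions generates all of $\CC$ as a convex cone, then the image point $k$ can be pushed in every direction and hence lies in the interior $\Int \Si_D[\wt S]$, which by (\ref{e:SiDWinSi}) forces $k \in \Int \Si[\F]$. Since by hypothesis $k \in \Bd \Si[\F]$, this interiority is impossible, so necessarily $\cone \frac{\pa D(b;k)}{\pa \beta}[\wt S - b] \neq \CC$. A proper convex cone in the plane always lies in a closed half-plane bounded by a line through the origin; writing that half-plane as $e^{\ii \xi}(\CC_+ \cup \RR)$ for a suitable $\xi \in [-\pi,\pi)$ gives precisely the claim.

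The main obstacle is verifying that the hypotheses of the borrowed results from \cite{Ka14} are met in the present determinant setting — in particular that $\wt S - b$ is a convex set of feasible directions, that $D_a$ is jointly analytic so that (\ref{e:k_asy}) legitimately describes the motion of the zero $k$, and that the inclusion (\ref{e:SiDWinSi}) correctly transfers an interior point of $\Si_D[\wt S]$ to an interior point of $\Si[\F]$. These are bookkeeping checks against the structure built in Section \ref{s:Da} rather than new analysis; once they are in place, the passage from ``cone $\neq \CC$'' to ``contained in a half-plane'' is purely two-dimensional convex geometry and requires no computation. Hence the lemma follows, and I would mark the proof complete with \qed as in the statement.
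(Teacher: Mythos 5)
Your proposal is correct and follows essentially the same route as the paper: the authors likewise observe that for $v$ in the convex set $\wt S - b$ the segment $b+v\zeta$, $\zeta\in[0,1]$, stays in $\wt S$, invoke \cite[Theorem 4.1 and Proposition 4.2]{Ka14} to conclude that $\cone \frac{\pa D (b;k)}{\pa \beta} [\wt S - b] = \CC$ would force $k$ to be interior to $\Si_D[\wt S]\subset\Si[\F]$, and then conclude via the two-dimensional fact that a proper convex cone lies in a closed half-plane $e^{\ii\xi}(\CC_+\cup\RR)$. The only difference is that you spell out the final convexity step explicitly, which the paper leaves implicit.
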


Let us prove \emph{statement (ii) of Theorem \ref{t:pa_in_exi}.}  Assume that $k \in \Bd \Si [\F_{\overline{\CC}_-}]$ and 
$a_i \in \CC_-$. So $b_i = a_i \in \CC_-$.
Then every $v \in \CC^N$ such that $v_j = 0 $ for $j\neq i$ belongs to $\wt S - b$ for small enough $v_i \in \CC$ .  Note that 
$\frac{\pa D (b;k)}{\pa \beta} (v) =  v_i  \pa_{\beta_i} D (b;k)$. Lemma \ref{l:intSiD}
implies that there exists a half-plane  $e^{\ii \xi} (\CC_+ \cup \RR) $ that contains $\frac{\pa D (b;k)}{\pa \beta} (v)$ for all 
$v_i \in \CC$. This implies $\pa_{\beta_i} D (b;k) = 0$ and, in turn, due to 
Lemma \ref{l:pa=0}, implies (\ref{e:pa=)a'}). 

Now, \emph{statement (iii) of Theorem \ref{t:pa_in_exi}} follows from statement (ii).

Let us prove \emph{statement (i) of Theorem \ref{t:pa_in_exi}.}  For all $j$ such that 
$b_j \in \CC_-$, statement (ii) implies (\ref{e:pa_in_exi}) with arbitrary $\xi$.
For $j$ such that $b_j \in \RR$,  it is easy to see that $v = (c \de_{ji})_{i=1}^N$ is contained in 
$\wt S - b$ for arbitrary $c \in \overline{\CC}_- \setminus \{ \infty \}$. The corresponding derivatives 
$\frac{\pa D (b;k)}{\pa \beta} (v) = c \pa_{\beta_j} D (b;k) $ are contained in 
one complex half-plane only if  (\ref{e:pa_in_exi})  holds. 
Thus, (i) follows from Lemma \ref{l:intSiD}. This completes the proof of Theorem \ref{t:pa_in_exi}.

The above arguments and Lemma \ref{l:intSiD} allows one to obtain easily  \emph{Theorem \ref{t:pa_in_exi_R} (ii)}.
\emph{Theorem \ref{t:pa_in_exi_R} (i)} follows immediately from Theorem \ref{t:pa_in_exi} (iii),
 Corollary \ref{c:AC_AR}, and (\ref{e:SiSym}).

\section{Equidistant case}
\label{s:Tetrahedron}

In this section  an example where the configuration of location of the interactions is symmetric is studied in order to obtain explicit formulas for optimizers. Namely, we consider the case where $N=4$ and $y_j$ are vertices of a regular tetrahedron with edges of length $L$. 
We denote $n_{\min} (k) := n_{\min} (k;\F_{\overline{\RR}}) $ 
(see (\ref{e:nmin})).

\begin{thm} \label{t:EqLN=4} Let $N=4$. Assume that $|y_j - y_{j'}| = L$ for all $j \neq j'$. 
Then 
%\begin{equation}\label{e:AdmFrR4} \text{
a frequency $\fr $ is achievable  over $\F_{\overline{\RR}}$ exactly when $\fr \neq \pm l \pi /L$, $l \in \NN$.
%} \end{equation}
The minimal decay function $\dr(\fr) = \dr (\fr; \F_{\overline{\RR}})$ is given by the following explicit formulas:
\begin{gather*}
\textstyle \dr (0) = 0, \qquad  
\quad \dr (\fr)  = \frac{1}{L}\ln \frac{L\fr}{ \sin (L\fr)} \quad \text{ for } \ 
\fr \in \ \pm \bigcup_{l = 0}^{\infty} \ \left( \frac{ 2 l \pi}{L}, \frac{(2 l +1) \pi}{L} \right) , \\
\textstyle \text{ and } \quad \dr (\fr) = \frac{1}{L}\ln \frac{-L\fr}{3 \sin (L\fr)} \quad \text{ for } 
\  \fr \in \ \pm \bigcup_{l \in \NN} \ \left(  \frac{(2 l -1)\pi}{L}, \frac{2 l  \pi}{L}\right).
\end{gather*}
\end{thm}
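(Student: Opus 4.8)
The plan is to reduce the characteristic equation to a single scalar identity and then analyse it as a problem in plane geometry. The tetrahedral symmetry gives $\wt G_z (y_j - y_{j'}) = g(z) := \tfrac{e^{\ii z L}}{4\pi L}$ for all $j \neq j'$, so that $\Ga_{\al,Y}(z) = E - g J$, where $J$ is the $4\times 4$ all-ones matrix and $E = \mathrm{diag}(e_1,\dots,e_4)$ with $e_j := \al_j - \tfrac{\ii z}{4\pi} + g$. Writing $w := g(z) - \tfrac{\ii z}{4\pi}$, so that $e_j = \al_j + w$, and $P(w) := \prod_{j}(\al_j + w)$, the rank-one form of $J$ yields $\det\Ga_{\al,Y}(z) = P(w) - g\,P'(w)$. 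Hence, away from the coincidence locus $P(w)=0$, a point $k$ is a $\Ga^{-1}$-pole precisely when
\[
\sum_{j=1}^{4} \frac{1}{\al_j + w} = \frac{1}{g(k)}, \qquad w = g(k) - \frac{\ii k}{4\pi}.
\]
With the convention $\tfrac{1}{\al_j + w} = 0$ for $\al_j = \infty$, this agrees with the reduced determinant of Lemma \ref{l:LocDet}, so it governs the whole family $\F_{\overline{\RR}}$.

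For fixed $k = \fr - \ii\dr$ with $\dr \geq 0$, every admissible summand $\tfrac{1}{\al_j + w}$, $\al_j \in \overline{\RR}$, sweeps out the circle $C$ through the origin with centre $-\tfrac{\ii}{2v}$ and radius $\tfrac{1}{2|v|}$, where $v := \im w$; indeed $\al_j \mapsto \al_j + w$ traces the horizontal line at height $v$, whose image under inversion is $C$. The attainable values of the left-hand side therefore fill the Minkowski sum of four copies of $C$, i.e. the closed disc of radius $\tfrac{2}{|v|}$ about $-\tfrac{2\ii}{v}$. Setting $s := 1/g(k) = 4\pi L\, e^{-\ii k L}$, achievability at decay $\dr$ is thus equivalent (when $v\neq 0$) to $\big| s + \tfrac{2\ii}{v}\big| \leq \tfrac{2}{|v|}$, which reduces to $v\big(v|s|^2 + 4\im s\big)\leq 0$. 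Inserting $|s|^2 = 16\pi^2 L^2 e^{-2\dr L}$, $\im s = -4\pi L e^{-\dr L}\sin(L\fr)$ and $v = \tfrac{1}{4\pi L}\big(e^{\dr L}\sin(L\fr) - L\fr\big)$, the inequality factors as $A\,B \geq 0$, where $A := e^{\dr L}\sin(L\fr) - L\fr$ and $B := 3\sin(L\fr) + L\fr\, e^{-\dr L}$; the two roots $A=0$ and $B=0$ are exactly the asserted formulas $e^{\dr L} = \tfrac{L\fr}{\sin(L\fr)}$ and $e^{\dr L} = \tfrac{-L\fr}{3\sin(L\fr)}$.

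It remains to identify, for each $\fr>0$, the least $\dr \geq 0$ in the achievable set (the even symmetry (\ref{e:SiSym}) then handles $\fr<0$). I would split according to the sign of $\sin(L\fr)$. On an interval $\big(\tfrac{2l\pi}{L}, \tfrac{(2l+1)\pi}{L}\big)$ one has $\sin(L\fr)>0$, so $B>0$ for all $\dr \geq 0$ while $A$ increases from $A(0)=\sin(L\fr)-L\fr<0$ to $+\infty$; the disc criterion $AB\geq 0$ holds for $\dr \geq \dr_1$, and at the endpoint $\dr_1$ one has $v=0$, so $C$ degenerates and $k$ arises instead from $P(w)=0$, which forces at least two of the real parameters to equal $-w\in\RR$. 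This degenerate branch is what realises the first formula, and it also accounts for the non-simple resonances and the infinite families of optimizers announced in the introduction. On an interval $\big(\tfrac{(2l-1)\pi}{L}, \tfrac{2l\pi}{L}\big)$ one has $\sin(L\fr)<0$, whence $A<0$ and $v\neq 0$ throughout, while $B$ decreases through its unique zero (using $L\fr>\pi>3\geq 3|\sin(L\fr)|$ to get $B(0)>0$); here $AB\geq 0$ reads $\dr\geq\dr_2$, the minimum is attained on the disc boundary, which forces all four summands to coincide (all $\al_j$ equal), and the second formula follows. Finally, at $\fr = l\pi/L$ one has $\sin(L\fr)=0$, hence $\im s=0$ and $v=-\fr/(4\pi)\neq 0$, so the criterion collapses to $v^2|s|^2\leq 0$ and fails for every $\dr$, giving the claimed non-achievable frequencies.

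The main obstacle I anticipate is not any single estimate but the careful bookkeeping around the coincidence locus: the branch $P(w)=0$ is invisible to the reduced scalar equation, yet it is precisely where the even-interval formula lives, so it must be reinstated by hand and shown both to produce resonances at $\dr=\dr_1$ and to produce none at smaller $\dr$. Keeping the signs of $A$, $B$ and $v$ consistent across the alternating intervals, together with the degenerate value $\fr=0$ (where $C$ is a line and $\dr(0)=0$ is checked directly by solving $\sum_j (\al_j+w)^{-1}=1/g$ over the reals), require the most care. The disc picture also makes the optimality conditions of Section \ref{s:NesCond} transparent: boundary achievability forces equal parameters, whose first minors $\pa_{\beta_j}D(b;k)$ are automatically equal and hence share a common ray, as Theorem \ref{t:pa_in_exi_R} demands.
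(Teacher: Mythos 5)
Your proposal is correct, and it reaches the theorem by a genuinely different route than the paper. The paper never solves for the achievable set directly: it first establishes the abstract necessary condition that all first minors $\pa_{\beta_j}D(b;k)$ lie on a common ray (Theorem \ref{t:pa_in_exi_R}, proved via the multiparameter perturbation formula (\ref{e:k_asy}) and the cone argument of Lemma \ref{l:intSiD}), and then runs a case analysis on $n_{\min}(k)\in\{1,2,3,4\}$ (Lemmas \ref{l:nmin=4}--\ref{l:nmin=2}), using the ratio identities (\ref{e:fr_paj_pai}) to force either all $A_j$ equal or two equal and two infinite, with existence supplied by Theorem \ref{t:SAclosed}. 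You instead exploit the rank-one structure $\Ga_{\al,Y}=E-gJ$ to reduce everything to $\sum_j(\al_j+w)^{-1}=1/g$, and then compute the \emph{entire} achievable set for fixed $k$ as a Minkowski sum of four circles, obtaining the closed-form criterion $AB\ge 0$ with $A=e^{\dr L}\sin(L\fr)-L\fr$, $B=3\sin(L\fr)+L\fr e^{-\dr L}$; I have checked that this criterion is equivalent to the disc inclusion and that its two zero sets reproduce the stated formulas, the excluded frequencies $\pm l\pi/L$, and even the optimizer structure of Lemmas \ref{l:nmin=4} and \ref{l:nmin=2} and Example \ref{ex:Tetrahedron} (boundary attainment of the Minkowski sum forces all four $\al_j$ equal; the coincidence locus $P(w)=0$ with $w\in\RR$ forces two parameters equal to $-w$). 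Your approach buys a complete description of $\Si_\res[\F_{\overline{\RR}}]$ in this symmetric case and is self-contained, bypassing Section \ref{s:NesCond} entirely; the paper's approach is less computational and is deliberately structured to illustrate the general optimality conditions, which apply without any symmetry of $Y$. Two points deserve explicit justification in a full write-up, though both are easy and you flag the essential one: (i) the rigidity statement that a boundary point of $C+C+C+C$ is represented only by four coinciding points of $C$ (equality case of the triangle inequality), and (ii) the observation that for $v=\im w\neq 0$ no real or infinite $\al_j$ can satisfy $\al_j+w=0$, so the scalar equation is equivalent to $\det\Ga_{\al,Y}(k)=0$ there and the degenerate branch $P(w)=P'(w)=0$ genuinely only lives on the locus $A=0$.
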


\begin{figure}[h]
    \centering
\includegraphics[width=0.75\linewidth]{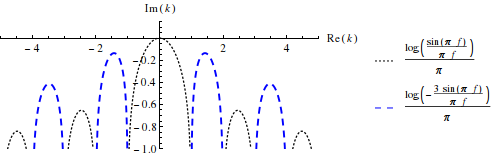}
    \caption{\footnotesize 
    Resonances 
    $k = \fr - \ii \dr (\fr)$ of minimal decay over $\F_{\overline{\RR}}$ in the equidistant case $L=\pi$, $N=4$;
%    \\   
-\,-\,- marks the case $n_{\min} (k) = 4$, \quad $\cdots$ marks the case $n_{\min} (k) = 2$ (see Lemmas \ref{l:nmin=4} and \ref{l:nmin=2}).
}
    \label{f:GraphDr}
\end{figure}

The rest of this subsection is devoted to the proof of Theorem \ref{t:EqLN=4}.

Let $n(a)=4$, $A_j := 4 \pi L a_j$ and $\vka:= L z$ (for the definition of $n(a)$ see (\ref{e:n})).
Then for  arbitrary $z$ and $a \in \RR^4$,
 \begin{gather}
 (-4 \pi L)^4 \det \Ga_{a,Y} (z) = 
 %\text{\footnotesize $
 %\left| \begin{array}{cccc} 
%\ii \vka -A_1  &   e^{\ii \vka}  &  e^{\ii \vka} & e^{\ii \vka} \\
% e^{\ii \vka}  &\ii \vka -A_2 & e^{\ii \vka} &   e^{\ii \vka} \\
% e^{\ii \vka}  &  e^{\ii \vka} &\ii \vka -A_3  &  e^{\ii \vka} \\
% e^{\ii \vka}  &  e^{\ii \vka} & e^{\ii \vka} & \ii \vka -A_4
%\end{array}  \right| 
%$}   \notag \\ = 
\prod_{j=1}^4 (\ii \vka - A_j - e^{\ii \vka}) + 
e^{\ii \vka} \sum_{j=1}^4 \prod_{\substack{1 \le j' \le 4 \\ j' \neq j}}  (\ii \vka - A_{j'} - e^{\ii \vka}) \  \text{ and }
\label{e:4A_D} 
   \\
%(-4 \pi L)^3 \pa_{\al_i}  \det \Ga_{a,Y} (z) =  
%\prod_{\substack{1 \le j \le 4 \\ j \neq i}} (\ii \vka - A_j - e^{\ii \vka}) + 
%e^{\ii \vka} \sum_{\substack{1 \le j \le 4 \\ j \neq i}} \prod_{j' \neq j,i}  
%(\ii \vka - A_{j'} - e^{\ii \vka}) . \notag 
%\end{gather}
%Comparing two last formulas, one gets 
% \begin{gather} 
 \label{e:4A_D_paD} 
 (-4 \pi L)^4 \det \Ga_{a,Y} (z) = 
 (-4 \pi L)^3  ( \ii \vka - A_i - e^{\ii \vka}) \pa_{\al_i} \det \Ga_{a,Y} (z) 
 + e^{\ii \vka} \prod_{j' \neq i}  (\ii \vka - A_{j'} - e^{\ii \vka}) .
\end{gather}

Assume now that $z=k \in \Si_\res (a,Y)$. Then  $\det \Ga_{a,Y} (k) = 0$, and so, (\ref{e:4A_D})
implies 
\begin{equation} \label{e:4Adet=0}
0= \prod_{j=1}^4 (\ii \vka - A_j - e^{\ii \vka}) + 
e^{\ii \vka} \sum_{j=1}^4 \prod_{j' \neq j}  (\ii \vka - A_{j'} - e^{\ii \vka}) .
\end{equation}

\begin{lem} \label{l:nmin=4} 
Assume that $n(a) = n_{\min} (k) = 4$ and $k \in \Si_\res (a,Y)$ is of minimal decay for 
$\fr \in \RR$. Then 
$\fr \in \pm \bigcup_{l \in \NN} (  (2 l -1)\pi /L, 2 l  \pi / L)$, all $a_j$ are equal to each other, and 
\begin{equation} \label{e:4An4_ak}
a_1 = \dots = a_4 = \frac{1}{4 \pi L} \ln \frac{-L\fr}{3 \sin (L\fr)}  - \frac{\fr \cot (L \fr)}{4 \pi}, 
\text{\quad \quad}  k = \fr + \ii \frac{1}{L}\ln \frac{3 \sin (L\fr)}{-L\fr} .
\end{equation}
\end{lem}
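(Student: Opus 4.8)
The plan is to combine the minimal-decay necessary condition of Theorem~\ref{t:pa_in_exi_R}(i) with the hypothesis $n_{\min}(k)=4$ and the determinantal identities (\ref{e:4A_D})--(\ref{e:4A_D_paD}). Throughout I write $\vka=Lk=\vka_1+\ii\vka_2$ (so $\vka_1=L\fr$), $A_j=4\pi L a_j\in\RR$, and introduce the shorthand $w_j:=\ii\vka-A_j-e^{\ii\vka}$. Then (\ref{e:4Adet=0}) reads $\prod_{j=1}^4 w_j+e^{\ii\vka}\sum_{j=1}^4\prod_{j'\neq j}w_{j'}=0$, i.e. $e_4(w)+e^{\ii\vka}e_3(w)=0$ in terms of the elementary symmetric functions $e_3,e_4$ of $(w_1,\dots,w_4)$. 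Since $n(a)=N=4$, all parameters are finite and $\beta_j=\al_j=a_j$, so by (\ref{e:pa_alDa}) the condition (\ref{e:pa_in_exi}) of Theorem~\ref{t:pa_in_exi_R}(i) asserts that the four minors $\pa_{\al_i}\det\Ga_{a,Y}(k)$ lie on a common ray $e^{\ii\xi}[0,+\infty)$.

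First I would evaluate these minors. Putting $z=k$ and $\det\Ga_{a,Y}(k)=0$ in (\ref{e:4A_D_paD}) gives
\[
(-4\pi L)^3\, w_i\,\pa_{\al_i}\det\Ga_{a,Y}(k)=-e^{\ii\vka}\prod_{j\neq i}w_j,\qquad i=1,\dots,4.
\]
Here the hypothesis $n_{\min}(k)=4$ enters decisively: if any minor vanished, Lemma~\ref{l:pa=0} would let me replace the corresponding $a_i$ by $\infty$ while keeping $k$ a $\Ga^{-1}$-pole, yielding a generating tuple with at most $3$ finite centers and contradicting $n_{\min}(k)=4$ (see (\ref{e:nmin})). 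Hence no minor vanishes. Feeding this back into the displayed identity forces every $w_j\neq0$: if some $w_i=0$, then for any $i'\neq i$ the product $\prod_{j\neq i'}w_j$ contains the zero factor $w_i$, so the right side vanishes and $w_{i'}\neq0$ would make the $i'$-th minor zero, hence all $w_j=0$ — but a direct differentiation of (\ref{e:4A_D}) shows every minor then vanishes, a contradiction. With all $w_j\neq0$ each minor equals $C/w_i^2$ with $C=-e^{\ii\vka}\prod_j w_j/(-4\pi L)^3\neq0$ independent of $i$, so the ray condition becomes $w_{i'}^2/w_i^2\in(0,\infty)$, i.e. $w_i/w_{i'}\in\RR$ for all $i,i'$; thus all four $w_j$ lie on one line through the origin.

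Next I would observe that $w_j-w_i=A_i-A_j\in\RR$, so the $w_j$ automatically lie on a horizontal line; a horizontal line through the origin is the real axis, so either all $w_j$ coincide or all $w_j$ are real. I rule out the real case. Writing $\im w_j=\vka_1-e^{-\vka_2}\sin\vka_1$, reality forces $\sin\vka_1\neq0$ (the value $\sin\vka_1=0$ gives $\vka_1=0$, i.e. $\fr=0$, excluded since $n_{\min}(0)=1$ by Example~\ref{ex:1center}); but then taking imaginary parts in $e_4(w)+e^{\ii\vka}e_3(w)=0$, with $e_3,e_4$ real and $\im e^{\ii\vka}\neq0$, yields $e_3(w)=0$ and hence $e_4(w)=\prod_j w_j=0$, contradicting $w_j\neq0$. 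Therefore all $w_j$ are equal, whence all $A_j$ — and so all $a_j$ — are equal, the first assertion of the lemma.

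Finally, with $w_1=\dots=w_4=w$ the characteristic equation collapses to $w^3(w+4e^{\ii\vka})=0$, and $w\neq0$ gives $w=-4e^{\ii\vka}$, i.e. $A:=4\pi L a_j=\ii\vka+3e^{\ii\vka}$. Separating real and imaginary parts of this scalar equation, $\im A=0$ reads $\vka_1+3e^{-\vka_2}\sin\vka_1=0$, so $e^{-\vka_2}=-\vka_1/(3\sin\vka_1)$; positivity of the left side forces $\vka_1$ and $\sin\vka_1$ to have opposite signs, which is exactly $\fr\in\pm\bigcup_{l\in\NN}((2l-1)\pi/L,2l\pi/L)$. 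Solving for $\vka_2$ gives $\im k=\tfrac1L\ln\tfrac{3\sin(L\fr)}{-L\fr}$, and substituting into $\re A=-\vka_2+3e^{-\vka_2}\cos\vka_1=\ln\tfrac{-\vka_1}{3\sin\vka_1}-\vka_1\cot\vka_1$ produces $a_j=\tfrac1{4\pi L}\ln\tfrac{-L\fr}{3\sin(L\fr)}-\tfrac{\fr\cot(L\fr)}{4\pi}$, which is (\ref{e:4An4_ak}). The main obstacle is the degeneracy bookkeeping in the two middle paragraphs — using $n_{\min}(k)=4$ to exclude both vanishing minors and the spurious real-axis branch — after which the conclusion is a routine real/imaginary separation.
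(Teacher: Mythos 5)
Your proposal is correct and follows essentially the same route as the paper: Theorem~\ref{t:pa_in_exi_R}(i) plus the identities (\ref{e:4A_D})--(\ref{e:4A_D_paD}) give that the minors equal $C/w_i^2$ on a common ray, hence $w_i/w_{i'}\in\RR$, and realness of the $A_j$ then forces either all $w_j$ equal or all real, the latter being excluded via $\re k\neq 0$ before solving $A=\ii\vka+3e^{\ii\vka}$. The only local deviations are that you exclude vanishing minors first via Lemma~\ref{l:pa=0} and then deduce $w_j\neq 0$ (the paper instead rules out $w_i=0$ directly by reducing to a two-center tuple as in (\ref{e:nmin=2_A=A}), getting $n_{\min}(k)\le 2$), and that you kill the all-real branch by taking imaginary parts of $e_4+e^{\ii\vka}e_3=0$ rather than concluding $e^{\ii\vka}\in\RR$; both variants are sound.
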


\begin{proof}
Since $n(a) = 4$, we see that $a_j \in \RR$, $\beta_j = \alpha_j$, for $j=1,\dots, 4$,
 and $D_a (\alpha;z) =\det \Ga_{\al,Y} (z) $. 
By Example \ref{ex:1center}, $n_{\min} (k) =1 $ for all $k \in \ii \RR$.
So $n_{\min} (k) =4$ yields that $\fr = \re k \neq  0$. 
 Formula (\ref{e:4A_D_paD}) implies that for each 
$i$ either $\ii \vka - A_i - e^{\ii \vka} = 0$, or 
 \begin{gather} \label{e:paD_i_k=0}
 (-4 \pi L)^3 \pa_{\al_i} \det \Ga_{\al,Y} (k) = 
- \frac{e^{\ii \vka} \prod_{j' \neq i}  (\ii \vka - A_{j'} - e^{\ii \vka})}
 {  ( \ii \vka - A_i - e^{\ii \vka}) }.
\end{gather}

Let us show that in the case $n_{\min} (k) =4$, one has 
\begin{equation} \label{e:ik-A<>0}
\text{$\ii \vka - A_i - e^{\ii \vka} \neq 0$ 
for all $i$.}
\end{equation}
Assume that $\ii \vka - A_i - e^{\ii \vka} = 0$ holds for certain $i$. Then 
(\ref{e:4Adet=0}) yields that there exists 
$i' \neq i$ such that $\ii \vka - A_{i'} - e^{\ii \vka} = 0$. Hence, 
$A_{i'} = A_{i} $ and, for 
$a' $ defined by $a'_j = \left\{ 
\begin{array}{cc} a_j , & j=i,i' \\ \infty, & j \neq i,i' \end{array} 
\right.  $,
we have (taking into account the convention of Section \ref{s:ResAndOpt})
 \begin{gather} \label{e:nmin=2_A=A}
\det \Ga_{a',Y} (k) =  \det \Ga_{\wt a', \wt Y} (k)
%= \left| \begin{array}{cc} 
%\ii \vka -A_i  &   e^{\ii \vka}  \\
% e^{\ii \vka}  &\ii \vka -A_{i} 
%\end{array}  \right| 
= (-4 \pi L)^{-2}  (\ii \vka - A_i - e^{ \ii \vka})(\ii \vka - A_i + e^{ \ii \vka}) = 0 .
\end{gather}
This means 
%that the resonance $k$ can be generated by 2 interaction centers and so 
$n_{\min} (k) \le 2$, a contradiction.

Thus, we see that (\ref{e:paD_i_k=0}) and $\pa_{\al_i} \det \Ga_{\al,Y} (z) \neq 0$ hold for all $i=1,\dots,4$.
To combine these conditions with Theorem \ref{t:pa_in_exi_R} (i), 
assume now that $k$ is of minimal decay for the frequency $\re k$. 
Then (\ref{e:pa_in_exi}) and (\ref{e:paD_i_k=0}) imply that for arbitrary $i \neq j$,
 \begin{gather} \label{e:fr_paj_pai}
\frac{\pa_{\al_i} \det \Ga_{\al,Y} (k)}{\pa_{\al_{j}} \det \Ga_{\al,Y} (k)} = 
\frac{  (\ii \vka - A_{j} - e^{\ii \vka})^2}
 {  ( \ii \vka - A_i - e^{\ii \vka})^2 } \in \RR_+ ,
\end{gather}

Let us show that $a_j = a_{j'} $ for all $j,j' = 1, \dots, 4$. Assume that the converse is true.
Then $A_{i} \neq A_{i'}$ for certain $i$ and $i'$.
However, (\ref{e:fr_paj_pai}) implies that $\frac{  \ii \vka - A_{j} - e^{\ii \vka} }
 {   \ii \vka - A_{j'} - e^{\ii \vka}} \in \RR \setminus \{0\} $ for all $j$ and $j'$.
So there exists $\wt \xi \in \RR$ such that  
$ \ii \vka - A_{j} - e^{\ii \vka} = c_j e^{\ii \wt \xi}$ with $c_j \in \RR \setminus \{0\}$ for all $j$.
Since $A_i , A_{i'} \in \RR$ and $0 \neq A_i - A_{i'} = (c_{i'} - c_i) e^{\ii \wt \xi} $, we see that 
$ e^{\ii \wt \xi} \in \RR$, and, in turn, 
%\begin{equation} \label{e:ik-eik_in_R}
$\ii \vka - e^{\ii \vka} \in \RR $. 
%\end{equation} 
Combining this with 
(\ref{e:4Adet=0}) and (\ref{e:ik-A<>0}), one gets $e^{\ii \vka} \in \RR$, and in turn,
gets $\ii \vka \in \RR$ from $\ii \vka - e^{\ii \vka} \in \RR $. Finally, note that $\ii \vka \in \RR$ contradicts $\re k \neq 0$.

%\begin{trem}
%$e^{\ii \vka} \in \RR$ because otherwise
%\[
%\sum_{j=1}^4 \prod_{j' \neq j}  (\ii \vka - A_{j'} - e^{\ii \vka}) = 
%\prod_{\substack{1 \le j \le 4 \\ j \neq i}} (\ii \vka - A_j - e^{\ii \vka}) = 0,
%\]
%a contradiction with (\ref{e:ik-A<>0}).
%\end{trem}

Summarizing, we have proved that if $k$ is of minimal decay and $n_{\min} (k) = 4$, then   %$f=\re k \neq 0$ and 
all $A_j  $ are equal to the same number, which we denote by $c$. Due to (\ref{e:ik-A<>0}), equality (\ref{e:4Adet=0}) turns  into  
$ c = \ii \vka + 3 e^{\ii \vka}  $. 
Since $c \in \RR$, taking $\re (\cdot)$ and $\im (\cdot)$ of the last equality 
we can derive an explicit relation 
between $\vka_1 := \re \vka \neq 0$, $\vka_2 :=  \im \vka \ge 0$, and $c$ 
using the arguments similar to that of the example in \cite[Section II.1.1]{AGHH12} (see also \cite{AH84,Sh85}).
Indeed, taking $\im (\cdot)$ one obtains 
$\vka_1 + 3 e^{-\vka_2} \sin \vka_1 = 0$ and, in turn, that 
$ \vka_1 \in \pm \cup_{l \in \NN} (  (2 l -1)\pi, 2 l  \pi)$
and $\vka_2 =  \ln \frac{3 \sin \vka_1}{-\vka_1} $. This gives the second part of (\ref{e:4An4_ak}).
The value of $c = 4 \pi L a_j$ is found by taking $\re (\cdot)  $.
% of the equality $ c = \ii \vka + 3 e^{\ii \vka}  $.
\end{proof}

%\begin{trem}
% One gets \begin{gather*} 
% 4 \pi L a_j = c = - \vka_2 + 3 e^{-\vka_2} \cos \vka_1  =  - \vka_2 + 3 e^{-\vka_2} 
% \sin \vka_1 \cot \vka_1    = \\ = - \vka_2 - \vka_1 \cot \vka_1 
%  \end{gather*} \end{trem}

\begin{lem} \label{l:nmin=3} 
Assume that $k \in \Si_\res (a,Y)$ is of minimal decay for 
$\fr \in \RR$, $a \in \overline{\RR}^4$, and $n(a) = 3$. Then $n_{\min} (k) \le 2 $.
\end{lem}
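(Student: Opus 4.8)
The plan is to remove the center whose strength is infinite, work with the explicit $3\times 3$ determinant of the remaining equidistant problem, and run a case analysis parallel to the proof of Lemma~\ref{l:nmin=4}, with one genuinely new ingredient supplied by the minor attached to the removed center.

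\emph{Setup.} After relabeling I may assume $a_4 = \infty$ and $a_1,a_2,a_3 \in \RR$, so that $\det\Ga_{\wt a,\wt Y}(\cdot)$ is the determinant of the equidistant $3$-center problem. If $\fr=\re k=0$, then $k\in\ii\RR$ and Example~\ref{ex:1center} already gives $n_{\min}(k)=1\le 2$; so I may assume $\fr\neq 0$. Writing $\vka=Lk$, $A_j=4\pi L a_j$ and $B_j:=\ii\vka-A_j-e^{\ii\vka}$, the computation that produced (\ref{e:4A_D}) and (\ref{e:4A_D_paD}) gives, for three centers,
\begin{gather*}
(-4\pi L)^3\det\Ga_{\wt a,\wt Y}(z)=\prod_{j=1}^3 B_j+e^{\ii\vka}\sum_{i=1}^3\prod_{j\neq i}B_j ,\\
(-4\pi L)^3\det\Ga_{\wt a,\wt Y}(z)=(-4\pi L)^2 B_i\,\pa_{\al_i}\det\Ga_{\wt a,\wt Y}(z)+e^{\ii\vka}\prod_{j\neq i}B_j ,
\end{gather*}
so that at the resonance $k$ (where $\det\Ga_{\wt a,\wt Y}(k)=0$) one has
\begin{equation*}
(-4\pi L)^2 B_i\,\pa_{\al_i}\det\Ga_{\wt a,\wt Y}(k)=-e^{\ii\vka}\prod_{j\neq i}B_j ,\qquad i=1,2,3. \tag{$\dagger$}
\end{equation*}

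\emph{First case: some $B_i=0$.} Then $(\dagger)$ forces $\prod_{j\neq i}B_j=0$, so a second factor $B_{i'}$ vanishes; hence $A_i=A_{i'}=\ii\vka-e^{\ii\vka}$, and exactly as in (\ref{e:nmin=2_A=A}) the $2$-center determinant for the pair $\{i,i'\}$ vanishes at $k$. This yields $n_{\min}(k)\le 2$, finishing this case.

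\emph{Second case: all $B_i\neq 0$.} Then $(\dagger)$ shows $\pa_{\al_i}\det\Ga_{\wt a,\wt Y}(k)\neq 0$ for $i=1,2,3$, and the minimal-decay condition of Theorem~\ref{t:pa_in_exi_R}(i) puts all these minors on a common ray $e^{\ii\xi}[0,+\infty)$; dividing two instances of $(\dagger)$ gives $B_{i'}^2/B_i^2\in\RR_+$, whence each $B_j=c_j e^{\ii\widetilde\xi}$ with $c_j\in\RR\setminus\{0\}$. As in Lemma~\ref{l:nmin=4}, if the $A_j$ are not all equal then $e^{\ii\widetilde\xi}\in\RR$, so all $B_j$ and $\ii\vka-e^{\ii\vka}=A_j+B_j$ are real; the resonance equation then forces $e^{\ii\vka}\in\RR$, hence $\ii\vka\in\RR$, contradicting $\fr\neq 0$. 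It remains to exclude the subcase $A_1=A_2=A_3=:c\in\RR$, where, using $B\neq 0$, the resonance equation collapses to $c=\ii\vka+2e^{\ii\vka}$.

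\emph{The new ingredient and main obstacle.} Here I invoke the fourth ray condition in (\ref{e:pa_in_exi}), the one attached to the removed center $j=4$ (with $\be_4=0$). Evaluating the bordered determinant (\ref{e:pa_betaNDa}) for the fully symmetric $3$-center matrix (with $G_k(y_j-y_{j'})=e^{\ii\vka}/(4\pi L)$ for $j\neq j'$) — conveniently through its adjugate, whose only surviving part is the rank-one projection onto the all-ones kernel vector — I expect $\pa_{\be_4}D(b;k)$ to equal a strictly positive constant times $e^{4\ii\vka}$, whereas by $(\dagger)$ each $\pa_{\al_i}\det\Ga_{\wt a,\wt Y}(k)$ is a positive multiple of $e^{2\ii\vka}$. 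Setting $w:=e^{2\ii\vka}$, the requirement that $w$ and $w^2$ lie on one ray $e^{\ii\xi}[0,+\infty)$ forces $w\in\RR_+$, i.e.\ $\re\vka\in\pi\ZZ$ and $\sin(\re\vka)=0$. On the other hand, reality of $c=\ii\vka+2e^{\ii\vka}$ means $\re\vka+2e^{-\im\vka}\sin(\re\vka)=0$, which together with $\sin(\re\vka)=0$ gives $\re\vka=0$, i.e.\ $\fr=0$ — again a contradiction. Hence the second case cannot occur, so $k$ falls under the first case and $n_{\min}(k)\le 2$. The delicate step I would check most carefully is the explicit evaluation (and strict positivity of the constant) in $\pa_{\be_4}D(b;k)$, since the whole argument hinges on its argument being that of $e^{4\ii\vka}$ rather than $e^{2\ii\vka}$.
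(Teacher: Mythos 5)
Your proof is correct, and the verification you flag as delicate checks out: by the paper's formula (\ref{e:pa_betaNDa}) with $c=(4\pi L)^{-1}e^{\ii\vka}$ one gets $-(4\pi L)^4\,\pa_{\beta_4}D_a(k)=e^{\ii\vka}\prod_{j=1}^3(\ii\vka-A_j-e^{\ii\vka})$, which in your symmetric subcase $B_j=-3e^{\ii\vka}$ equals $-27e^{4\ii\vka}$, so $\pa_{\beta_4}D_a(k)$ is indeed a positive constant times $e^{4\ii\vka}$ while each $\pa_{\al_i}\det\Ga(k)$ is a positive multiple of $e^{2\ii\vka}$. The paper uses the same ingredients (the equidistant determinant identities, Theorem \ref{t:pa_in_exi_R}(i), and the minor attached to the removed fourth center) but organizes the endgame more economically: instead of taking ratios only among the first three minors, it forms the ratio of the fourth minor to each of them, $(4\pi L)^2\,\pa_{\beta_4}D_a(k)/\pa_{\beta_j}D_a(k)=(\ii\vka-A_j-e^{\ii\vka})^2\in\RR_+$, which forces every $B_j$ to be real outright. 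This removes the need for your case split into ``not all $A_j$ equal'' versus $A_1=A_2=A_3$, since reality of all $B_j$ already gives $\ii\vka-e^{\ii\vka}\in\RR$, then $e^{\ii\vka}\in\RR$ from the resonance equation, then $\ii\vka\in\RR$, contradicting $\fr\neq 0$. Your two-stage variant, with the $w$ versus $w^2$ ray argument in the symmetric subcase, is a valid alternative and makes explicit why the fourth ray condition is genuinely needed; the paper's single-ratio version buys a shorter, uniform argument.
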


\begin{proof} Assume that $n(a) = n_{\min} (k) =3$. 
Let us enumerate $y_j$ such that $a_j \in \RR$ and $\beta_j = \alpha_j$ for $j=1,2,3$.
Then $a_4= \infty$ and $\beta_4 = -1/\alpha_4$.
Since $D_a ((\beta_1, \beta_2, \beta_3, 0) ;z) =\det \Ga_{\wt \al, \wt Y} (z) $, we obtain 
analogously to
%the case $n(a)=4$ and (\ref{e:4Adet=0})
(\ref{e:4Adet=0}) and (\ref{e:4A_D_paD}) 
that for $i=1,2,3$,
\begin{gather} \label{e:4Adet=0_n=3} \textstyle
0= \prod_{j=1}^3 (\ii \vka - A_j - e^{\ii \vka}) + 
e^{\ii \vka} \sum_{j=1}^3 \prod_{\substack{1 \le j \le 3 \\ j' \neq j}} 
 (\ii \vka - A_{j'} - e^{\ii \vka})  \quad \text{ and } \\
 \textstyle
 0 = (-4 \pi L)^2 ( \ii \vka - A_i - e^{\ii \vka}) \pa_{\beta_i}  D_a  (k) 
 + e^{\ii \vka} \prod_{\substack{1 \le j' \le 3 \\ j' \neq i}}   (\ii \vka - A_{j'} - e^{\ii \vka}). \notag
\end{gather}

In the same way as in the proof of Lemma \ref{l:nmin=4}, one can show that 
$n(a) = n_{\min} (k) = 3$ implies 
$\ii \vka - A_j - e^{\ii \vka} \neq 0$  and  for $j=1,2,3$,
\begin{gather} \label{e:paD_i_k=0_n=3}
 (-4 \pi L)^2 \pa_{\beta_j} D_a (k) = 
 -\frac{e^{\ii \vka} \prod_{\substack{1 \le j' \le 3 \\ j' \neq j}}  (\ii \vka - A_{j'} - e^{\ii \vka})}
 {  ( \ii \vka - A_j - e^{\ii \vka}) } \neq 0
\end{gather}
For $j=4$, (\ref{e:pa_betaNDa})  with $c=(4 \pi L)^{-1} e^{\ii \vka}$ implies 
\begin{gather} 
 - (4 \pi L)^4 \pa_{\beta_4} D_a (k) = 
 %\text{ \footnotesize $\left| \begin{array}{cccc} 
%\ii \vka -A_1  &   e^{\ii \vka}  &  e^{\ii \vka} & e^{\ii \vka} \\
% e^{\ii \vka}  &\ii \vka -A_2 & e^{\ii \vka} &   e^{\ii \vka} \\
% e^{\ii \vka}  &  e^{\ii \vka} &\ii \vka -A_3  &  e^{\ii \vka} \\
% e^{\ii \vka}  &  e^{\ii \vka} & e^{\ii \vka} & e^{\ii \vka}
%\end{array}  \right| 
%$}
%\notag \\ 
%= 
e^{\ii \vka} \prod_{j=1}^3  (\ii \vka - A_j - e^{\ii \vka})  \neq 0.
\label{e:4A_D_b4} 
\end{gather}

Since $k$ is of minimal decay, we obtain from Theorem \ref{t:pa_in_exi_R} (i) 
that for $j=1,2,3,$
 \begin{gather*} %\label{}
\text{$(4 \pi L)^2 \frac{\pa_{\beta_{4}} D_a (k) }{\pa_{\beta_j} D_a (k)}= 
(\ii \vka - A_{j} - e^{\ii \vka})^2 \in \RR_+$ and so $\ii \vka - A_{j} - e^{\ii \vka} \in \RR \setminus \{0\}$.}
\end{gather*}
Hence, $\ii \vka - e^{\ii \vka} \in \RR $ and, like in Lemma \ref{l:nmin=4},
one obtains from (\ref{e:4Adet=0_n=3}) that $e^{\ii \vka}$ and $\ii \vka$ are real.
The latter implies $k \in \ii \RR$ and, in turn, $n_{\min} (k) = 1$, a contradiction.
\end{proof}

\begin{lem} \label{l:nmin=2} (i) Assume that $k \in \Si_\res (a,Y)$ is of minimal decay for 
$\fr \in \RR$, $a \in \overline{\RR}^4$, and $n(a) = n_{\min} (k) = 2$. Then: 
%\begin{equation*} %\label{e:4An2_ak}
\item[\quad (i.a)] $\fr \in \pm \bigcup_{l = 0}^{\infty} (  \frac{2 l \pi}{L}, \frac{(2 l +1) \pi}{L})$ and $k = \fr +  \frac{\ii}{L}\ln \frac{ \sin (L\fr)}{L\fr} $;
%\end{equation*}
\item[\quad (i.b)] it is possible to enumerate $y_j$ so that 
%\[\text{
$a_1= a_2=\frac{1}{4 \pi L} \ln \frac{L \fr}{\sin (L \fr)} - \frac{\fr}{4 \pi} \cot (L\fr)$ \quad and \quad $a_3=a_4= \infty$.
%} \]

\noindent (ii) If $f$, $k$, and $a$ satisfy (i.a)-(i.b), then $k$ and $a$
are of minimal decay for $f$.
\end{lem}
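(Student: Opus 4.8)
The plan is to collapse the four-centre problem to the two-centre subsystem forced by $n(a)=2$, compute the four first-minor derivatives $\pa_{\beta_j}D(b;k)$ in the variables of Section \ref{s:Tetrahedron}, and then turn the minimal-decay condition of Theorem \ref{t:pa_in_exi_R} into a scalar equation. Concretely, I would use the convention of Section \ref{s:Da} to enumerate the centres so that $a_1,a_2\in\RR$ and $a_3=a_4=\infty$; then $\beta_1=a_1$, $\beta_2=a_2$, $\beta_3=\beta_4=0$, and $\det\Ga_{a,Y}=\det\Ga_{\wt a,\wt Y}$ is the two-centre determinant for $\{y_1,y_2\}$. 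Writing $\vka=Lk$ and $A_j=4\pi L a_j$, the resonance equation $\det\Ga_{\wt a,\wt Y}(k)=0$ becomes $(\ii\vka-A_1)(\ii\vka-A_2)=e^{2\ii\vka}$. For $i=1,2$ the minor formula (\ref{e:pa_alDa}) gives the surviving diagonal entry, $\pa_{\beta_1}D=-(\ii\vka-A_2)/(4\pi L)$ and $\pa_{\beta_2}D=-(\ii\vka-A_1)/(4\pi L)$; for $i=3,4$ I would evaluate (\ref{e:pa_betaNDa}) with the convenient choice $c=(4\pi L)^{-1}e^{\ii\vka}$ (as in (\ref{e:4A_D_b4})), obtaining after subtracting the last row from the first two that $\pa_{\beta_3}D=\pa_{\beta_4}D=(4\pi L)^{-3}e^{\ii\vka}(\ii\vka-A_1-e^{\ii\vka})(\ii\vka-A_2-e^{\ii\vka})$.

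\emph{Part (i).} Since $n_{\min}(k)=2>1$, Example \ref{ex:1center} forbids $k\in\ii\RR$, so $\vka_1:=\re\vka=L\fr\neq0$. Minimal decay and Theorem \ref{t:pa_in_exi_R}(i) place all four derivatives on one ray $e^{\ii\xi}[0,+\infty)$. The $i=1,2$ entries put both $\ii\vka-A_1$ and $\ii\vka-A_2$ on the opposite ray $-e^{\ii\xi}[0,+\infty)$; as these two numbers share the imaginary part $\vka_1\neq0$, equality of arguments forces them equal, whence $A_1=A_2$ and $(\ii\vka-A_1)^2=e^{2\ii\vka}$, i.e. $\ii\vka-A_1=\pm e^{\ii\vka}$. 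To kill the minus branch I would use the $i=3$ entry: if $\ii\vka-A_1=-e^{\ii\vka}$ then $\pa_{\beta_3}D=4e^{3\ii\vka}/(4\pi L)^3$ while $\pa_{\beta_1}D=e^{\ii\vka}/(4\pi L)$, and these are collinear only if $e^{2\ii\vka}$ is a positive real, i.e. $\vka_1\in\pi\ZZ$; but then $\sin\vka_1=0$, and reality of $A_1=\ii\vka+e^{\ii\vka}$ forces $\vka_1=0$, contradicting $\fr\neq0$. Hence $\ii\vka-A_1=+e^{\ii\vka}$. Taking imaginary and real parts of this equation (with $\vka=\vka_1+\ii\vka_2$) yields $e^{-\vka_2}=\vka_1/\sin\vka_1$, whose positivity pins $\vka_1=L\fr$ to the intervals where $\sin(L\fr)$ has the sign of $\fr$ (this is (i.a)), together with $\vka_2=\ln(\sin\vka_1/\vka_1)$ and $A_1=A_2=-\vka_2-e^{-\vka_2}\cos\vka_1=\ln(\vka_1/\sin\vka_1)-\vka_1\cot\vka_1$ — exactly the formulas of (i.a)–(i.b).

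\emph{Part (ii).} First I would verify the forward direction: the prescribed $a$ lies in $\overline{\RR}^4$ and satisfies $\ii\vka-A_1=\ii\vka-A_2=e^{\ii\vka}$, so $k\in\Si_\res(a,Y)$ and $\dr_{\min}(\fr)\le-\im k$. For the reverse inequality I would invoke existence and exhaust the possibilities for $n_{\min}$: by Theorem \ref{t:SAclosed} a minimal-decay operator for $\fr$ exists, its resonance $k'=\fr-\ii\dr_{\min}(\fr)$ is a boundary point, and Theorem \ref{t:pa_in_exi}(iii) supplies a real tuple $a'\in\overline{\RR}^4$ with $n(a')=n_{\min}(k')$. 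Now $\fr\neq0$ excludes $n_{\min}=1$ (Example \ref{ex:1center}), Lemma \ref{l:nmin=3} excludes $n_{\min}=3$, and Lemma \ref{l:nmin=4} would place $\fr$ in the complementary ``$3\sin$'' intervals under $n_{\min}=4$, impossible on our $\sin>0$ intervals; so $n_{\min}(k')=2$. Part (i), applied to $k'$, then forces $k'=\fr+\tfrac{\ii}{L}\ln\tfrac{\sin(L\fr)}{L\fr}=k$, giving $\dr_{\min}(\fr)=-\im k$, so $k$ and $a$ are of minimal decay.

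\emph{Main obstacle.} The genuinely global step is the lower bound in (ii), which cannot be obtained by local perturbation alone and instead requires combining the closedness/existence statement (Theorem \ref{t:SAclosed}), the availability of a \emph{real} minimal representation with $n(a')=n_{\min}$ (Theorem \ref{t:pa_in_exi}(iii), via Corollary \ref{c:AC_AR}), and the exhaustive $n_{\min}$-classification (Lemmas \ref{l:nmin=3}, \ref{l:nmin=4} and part (i)); the subtle point is arguing that on the $\sin>0$ intervals the competing $n_{\min}=4$ branch is genuinely unavailable. On the computational side the only nontrivial evaluation is $\pa_{\beta_3}D$ through the regularised determinant (\ref{e:pa_betaNDa}) for the infinite parameters, after which the clean collinearity test ``$e^{2\ii\vka}$ positive real'' is what eliminates the spurious branch $\ii\vka-A=-e^{\ii\vka}$ and selects the correct family of intervals.
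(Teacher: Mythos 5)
Your proposal is correct and follows essentially the same route as the paper: part (i) rests on Theorem \ref{t:pa_in_exi_R}(i) applied to the explicitly computed first minors (the paper is terse here, asserting $A_1=A_2$, $\ii\vka-A_1-e^{\ii\vka}\in\RR$ and $\ii\vka-A_1+e^{\ii\vka}\neq 0$ ``as in Lemma \ref{l:nmin=3}'', whereas you exclude the branch $\ii\vka-A_1=-e^{\ii\vka}$ via collinearity of $\pa_{\beta_1}D$ and $\pa_{\beta_3}D$ --- an equivalent argument reaching the same factor selection), and part (ii) uses the identical exhaustion of $n_{\min}\in\{1,3,4\}$ through Example \ref{ex:1center}, Lemma \ref{l:nmin=3}, and the disjointness of the frequency ranges in Lemma \ref{l:nmin=4}. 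Your filled-in computations of $\pa_{\beta_j}D$ (including the $c=(4\pi L)^{-1}e^{\ii\vka}$ evaluation of (\ref{e:pa_betaNDa})) and the sign analysis yielding the intervals and the formulas for $k$ and $a_*$ all check out.
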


\begin{proof}
\emph{(i)} As before, let us enumerate $y_j$ such that $a_j \in \RR$,  $j=1,2$.
% and $\beta_j = \alpha_j$ for.
Using the fact that $n_{\min} (k) =1$ for $k \in \ii \RR$, one shows in a way similar to the proof of Lemma \ref{l:nmin=3}, 
that $\ii \vka \not \in \RR$, $A_1 = A_2$, $\ii\vka - A_1 - e^{\ii \vka} \in \RR$, and that $\ii\vka - A_1 + e^{\ii \vka} \neq 0$.
Then (\ref{e:nmin=2_A=A}) implies that $\ii\vka - A_1 - e^{\ii \vka} = 0$.
%, and that, changing $A_1 \in \RR$ in the last equation, we always get 
%$k = \vka/L \in \Si (a,Y) \subset \Si [\overline{\RR}^N]$ with 
%$a=((4\pi L)^{-1} A_1, (4\pi L)^{-1} A_1, \infty,\infty)$. 
Taking imaginary and real parts of $ A_1=\ii\vka  - e^{\ii \vka}$ in a way similar to the proof 
of Lemma \ref{l:nmin=4}, one gets 
%the equations 
%$\vka_1 - e^{-\vka_2 } \sin (\vka_1) =0 $ and $A_1 = - \vka_2 - e^{- \vka_2} \cos \vka_1$
%to express $\im \vka = \vka_2$ and $A_1 = A_2$ via $\re \vka = \vka_1 \neq 0$.
%This gives 
(i.a-b).

\emph{(ii)} Suppose (i.a) and (i.b). It is obvious that $k \in \Si (a,Y)$ and so $f$ is an achievable  frequency. By Theorem \ref{t:SAclosed}, there exists resonance $k_0$ of minimal decay for $f$. The facts that $n_{\min} (k_0) \neq 1,3,4 $ follow, resp., from $f\neq 0$, Lemma \ref{l:nmin=3}, and the facts that $f$ is not in the frequency range of Lemma \ref{l:nmin=4}. Thus, $n_{\min} (k_0) = 2 $, and statement (i) of the lemma implies $k_0 =k$.
\end{proof}

Combining arguments of the proof of Lemma \ref{l:nmin=2} (ii) with Lemmas \ref{l:nmin=4}, \ref{l:nmin=3} and \ref{l:nmin=2} (i), it is easy to show that
$\RR \setminus \re \Si_\res [\F_{\overline{\RR}}] = \frac{\pi}{L} (\ZZ \setminus \{0\})$  and the fact that
\begin{equation} \label{e:nmin=2Inv}
\text{if $f$, $k$, and $a$ are as in Lemma \ref{l:nmin=4}, then $k$ and $a$ are of minimal decay for $f$.}
\end{equation} 
This completes the proof of Theorem \ref{t:EqLN=4}.

\section{Additional remarks and discussion}
\label{s:AddRem}

\textbf{Achievable frequencies}. Generically, if $N\ge 2$, the set  of achievable  frequencies $\re \Si_\res [\F_{\overline{\RR}}]$ takes the whole line $\RR$. More precisely, the example with two centers at the end of \cite[Section II.1.1]{AGHH12} easily implies the following statement.

\begin{prop} \label{p:AdmFr}
Let $N \ge 2$.
If $\fr_0 \in \RR \setminus \re \Si_\res [\F_{\overline{\RR}}]$, then 
for all $1 \le j,j' \le N$, $j \neq j'$, there exist 
nonzero integers $l_{j,j'} $ such that $\fr_0=\dfrac{ \pi l_{j,j'}}{|y_j - y_{j'}|}$. In particular, the set $\RR \setminus \re \Si_\res [\F_{\overline{\RR}}]$ either consists of isolated points, or is empty.  \qed
\end{prop}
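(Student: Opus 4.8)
The plan is to argue by contraposition and to reduce the whole problem to a two-center subsystem. Fix a pair $j \neq j'$ and put $L := |y_j - y_{j'}|$; I will show that every $\fr_0 \notin \tfrac{\pi}{L}(\ZZ \setminus \{0\})$ is achievable over $\F_{\overline{\RR}}$. The crucial structural point is the removal convention (\ref{e:wtAl}): choosing $\al \in \overline{\RR}^N$ with $\al_m = \infty$ for every $m \notin \{j,j'\}$ collapses $H_\al$ to the two-center operator $H_{\wt\al}$ with $\wt Y = (y_j, y_{j'})$. Since $\infty \in \overline{\RR}$, such $H_\al$ belong to $\F_{\overline{\RR}}$, so any frequency realized by the two-center subsystem is automatically achievable over the full family. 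Hence it suffices to produce, for each admissible $\fr_0$, a two-center resonance $k$ with $\re k = \fr_0$.

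For the two-center system I take equal real strengths $a_j = a_{j'} =: a$ and set $\vka := Lk$ and $A := 4\pi L a$. As in (\ref{e:nmin=2_A=A}), the nonzero resonances are then the zeros in $\CC_- \cup \RR$ of
\[
(\ii\vka - A - e^{\ii\vka})(\ii\vka - A + e^{\ii\vka}),
\]
so it is enough to make one factor vanish, i.e. to solve $A = \ii\vka \mp e^{\ii\vka}$ for real $A$ with prescribed $\vka_1 := \re\vka = L\fr_0$ and with $\vka_2 := \im\vka \le 0$. Writing $e^{\ii\vka} = e^{-\vka_2}(\cos\vka_1 + \ii\sin\vka_1)$ and imposing $\im A = 0$, the imaginary part condition becomes $e^{-\vka_2} = \pm\,\vka_1/\sin\vka_1$, which is solvable with $\vka_2 \le 0$ exactly when the right-hand side is $\ge 1$; the real part then simply determines $a = A/(4\pi L) \in \RR$, so the resulting $\al$ is feasible.

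When $\sin\vka_1 \neq 0$ (equivalently $\fr_0 \notin \tfrac{\pi}{L}\ZZ$) the inequality $|\sin\vka_1| < |\vka_1|$ for $\vka_1 \neq 0$ shows that exactly one of the two values $\pm\vka_1/\sin\vka_1$ exceeds $1$, so precisely one factor supplies a real $A$ together with a resonance $k = \vka/L$ having $\re k = \fr_0$ and $\im k < 0$; this is the same computation underlying (\ref{e:4An4_ak}) and Lemma \ref{l:nmin=2}. The borderline frequency $\fr_0 = 0$ is covered separately by (\ref{e:Dr_Min_aprR}) (or by Example \ref{ex:1center}), so every $\fr_0 \notin \tfrac{\pi}{L}(\ZZ\setminus\{0\})$ is indeed achievable. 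Contraposing and then letting the pair vary yields: if $\fr_0 \in \RR \setminus \re\Si_\res[\F_{\overline{\RR}}]$, then for each pair $j\neq j'$ one has $\fr_0 = \pi l_{j,j'}/|y_j - y_{j'}|$ with $l_{j,j'} \in \ZZ\setminus\{0\}$. Finally, fixing any single pair (possible since $N\ge 2$) exhibits $\RR\setminus\re\Si_\res[\F_{\overline{\RR}}]$ as a subset of the discrete set $\tfrac{\pi}{|y_j-y_{j'}|}(\ZZ\setminus\{0\})$, so it consists of isolated points or is empty.

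The step I expect to require the most care is the branch bookkeeping in the second and third paragraphs: one must select the correct factor (depending on the sign of $\sin(L\fr_0)$) so that the constructed resonance genuinely lands in the lower half-plane $\CC_-\cup\RR$ rather than in $\CC_+$, since only the former qualifies as a resonance under (\ref{e:SiRes}). Everything else is either the standard two-center determinant (which may be quoted from the example at the end of \cite[Section II.1.1]{AGHH12}) or the elementary inequality $|\sin x|\le|x|$.
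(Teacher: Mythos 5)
Your argument is correct and is essentially the proof the paper intends: the paper simply cites the two-center example of \cite[Section II.1.1]{AGHH12}, and you carry out exactly that reduction (collapsing to a two-center subsystem via $\al_m=\infty$, which is legitimate since $\infty\in\overline{\RR}$) together with the explicit solution of $A=\ii\vka\mp e^{\ii\vka}$ for real $A$, with the branch chosen so that $|\vka_1/\sin\vka_1|>1$ forces $\im\vka<0$. The logic of the contraposition over an arbitrary pair $j\neq j'$ and the treatment of $\fr_0=0$ via (\ref{e:Dr_Min_aprR}) are both sound.
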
 
\vspace{0.5ex}

 \textbf{Minimization of the resonance width $\GaWidth$.} 
The interpretation of resonances $k$ from the point of view of the Schrödinger equation $\ii \pa_t u = H_\al u$ is usually done in another system of parameters. 
Namely, $E  =\re k^2$ is interpreted as \emph{the energy of the resonance} $k$ 
%(sometimes only the case of $E \ge 0$ is taken into account on Physics grounds) 
and $\GaWidth = 2 |\im k^2|$ is \emph{the width of the resonance} 
(see 
%since it  measures  the width of a `bump' in the modulus of the scattering amplitude, see
 e.g. \cite{RSIV78}). For nonnegative potentials with constraints on their $\L^p$-norms and compact supports, the problem of finding local and global minimizers of $\GaWidth$ was considered in \cite{HS86,S87}. 

The results of previous sections can be easily adapted to the problem of minimization of resonance width. The analogue of the problem of \cite{HS86} for point interactions  can be addressed in the following way. 
%For a feasible set of operators $\F$, 
%  defined in Section \ref{l:ResAndOpt}, 
%let us introduce the set $\La (\F) := \bigcup_{H_\al \in \F} \{ k^2 : k \in \Si_\res (\al,Y) \}$. 
%Let us consider  the resonance width functional $\GaWidth (k,\al) = 2 |\im k^2|$ defined 
%on the pairs $\{ k,\al\} \in \CC \times (\overline{\RR})^N$ such that $k \in \Si_\res (\al,Y)$.  

\begin{cor} \label{c:Width} Let $0 \le E_1 \le E_2 \le +\infty $ and $N\ge 2$. 
%Assume that $\re \La (\F_{\overline{\RR}}) \cap [E_1,E_2] \neq \emptyset$. 
Then: 
\item[(i)] There exists $a \in \overline{\RR}^N$ and $k_0 \in \Si_\res (a,Y)$ such that $|\im k_0^2| %\GaWidth (k_0,a) 
= \inf_{\substack{k \in  \Si_\res [\F_{\overline{\RR}}] \\ E_1 \le \re k^2 \le E_2 }} |\im k^2| $. 
\item[(ii)] For any $a$ and $k_0$ satisfying (i), the necessary condition (i) of Theorem \ref{t:pa_in_exi} holds.
\end{cor}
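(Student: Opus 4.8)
The plan is to treat (i) by the direct method and (ii) by reducing to the minimal-decay necessary condition already proved in Theorem \ref{t:pa_in_exi_R}(i). Throughout I write $k=\fr-\ii\dr$ with $\fr=\re k$ and $\dr=-\im k\ge0$ for $k\in\CC_-\cup\RR$, so that $\re k^2=\fr^2-\dr^2$ and $|\im k^2|=2|\fr|\dr$; the feasible set is $K:=\{k\in\Si_\res[\F_{\overline{\RR}}] : E_1\le \fr^2-\dr^2\le E_2\}$. This set is nonempty: for $E_1=0$ the point $k=0$ lies in $K$, while for $E_1>0$ the minimal-decay curve $\fr\mapsto \fr-\ii\dr_{\min}(\fr;\F_{\overline{\RR}})$ has energy $\fr^2-\dr_{\min}(\fr)^2$ which, on each achievable interval, takes small values near $\fr=0$ and grows without bound for large $\fr$ (the decay grows only logarithmically by Theorem \ref{t:UniLogBound}), hence meets every level in $[E_1,E_2]$.

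For (i) I would argue as follows. First, $\Si_\res[\F_{\overline{\RR}}]=\Si[\F_{\overline{\RR}}]\cap(\CC_-\cup\RR)$ is closed, since $\overline{\RR}^N$ is compact so Theorem \ref{t:SAclosed} makes $\Si[\F_{\overline{\RR}}]$ closed, and the constraint $E_1\le\re k^2\le E_2$ is a closed condition; thus $K$ is closed. The crucial step is coercivity. Given a minimizing sequence $k_n\in K$ with $2|\fr_n|\dr_n\le M$, the constraint gives $\dr_n^2\le\fr_n^2-E_1\le\fr_n^2$, hence $\dr_n\le|\fr_n|$ and therefore $2\dr_n^2\le 2|\fr_n|\dr_n\le M$, so $\dr_n$ is bounded. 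To bound $|\fr_n|$ I would use that any achievable resonance obeys the uniform logarithmic bound: by Theorem \ref{t:UniLogBound} and the evenness in Remark \ref{r:BoundOverR}, $\dr_n\ge \dr_{\min}(\fr_n;\F_{\overline{\RR}})\ge \frac{2}{N\diam(Y)}\ln(|\fr_n|+1)-c_1$, whence $M\ge 2|\fr_n|\big(\frac{2}{N\diam(Y)}\ln(|\fr_n|+1)-c_1\big)$, forcing $|\fr_n|$ to stay bounded. Thus $(k_n)$ is bounded; a convergent subsequence has limit $k_0\in K$ by closedness, and since $\Si_\res[\F_{\overline{\RR}}]$ is the closed union over $a\in\overline{\RR}^N$ there is $a\in\overline{\RR}^N$ with $k_0\in\Si_\res(a,Y)$. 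Continuity of $k\mapsto|\im k^2|$ gives $|\im k_0^2|=\inf_K|\im k^2|$, proving (i).

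For (ii) I would show that any minimizer $k_0$ is automatically of minimal decay for its own frequency, and then quote Theorem \ref{t:pa_in_exi_R}(i), whose conclusion is exactly condition (i) of Theorem \ref{t:pa_in_exi}, i.e. (\ref{e:pa_in_exi}). By the $\ii\RR$-symmetry (\ref{e:SiSym}) of $\Si_\res[\F_{\overline{\RR}}]$ and of the objective and constraint, I may assume $\fr_0=\re k_0\ge0$, the case $\fr_0<0$ following since (\ref{e:pa_in_exi}) transforms covariantly under the reflection. If $\fr_0=0$ or $k_0\in\RR$, then $|\im k_0^2|=0$ forces $\dr_0=\dr_{\min}(\fr_0)$ (when $\fr_0=0$ feasibility with $E_1\ge0$ even gives $k_0=0$), so $k_0$ is trivially of minimal decay. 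Suppose now $\fr_0>0$, $\dr_0>0$, and, toward a contradiction, $\dr_0>\dr_{\min}(\fr_0)$. Consider $k^{*}(\fr)=\fr-\ii\dr_{\min}(\fr)\in\Si_\res[\F_{\overline{\RR}}]$ on the achievable interval containing $\fr_0$, with energy $\mathcal E(\fr)=\fr^2-\dr_{\min}(\fr)^2$ and objective $\mathcal G(\fr)=2\fr\dr_{\min}(\fr)$. If $\mathcal E(\fr_0)\le E_2$, then $k^{*}(\fr_0)$ is feasible and $\mathcal G(\fr_0)<2\fr_0\dr_0$, contradicting minimality. If $\mathcal E(\fr_0)>E_2$, then, since $\mathcal E$ decreases to a value $\le E_2$ as $\fr$ approaches the left end of the interval (to $0$, where $\mathcal E\to0$, or to a singular frequency, where $\dr_{\min}\to\infty$ and $\mathcal E\to-\infty$), there is $\fr_1\in(0,\fr_0)$ with $\mathcal E(\fr_1)=E_2$; then $\dr_{\min}(\fr_1)^2=\fr_1^2-E_2<\fr_0^2-\re k_0^2=\dr_0^2$ and $\fr_1<\fr_0$ give $\mathcal G(\fr_1)<2\fr_0\dr_0$, again a feasible resonance beating $k_0$. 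Hence $\dr_0=\dr_{\min}(\fr_0)$, so $k_0$ and $a$ are of minimal decay for $\fr_0$, and Theorem \ref{t:pa_in_exi_R}(i) yields (\ref{e:pa_in_exi}).

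The main obstacle is the coercivity step in (i): without a lower bound on the decay, nothing prevents a minimizing sequence from escaping along the real axis with $\dr_n\to0$ and $|\fr_n|\to\infty$, and it is precisely the uniform logarithmic bound of Theorem \ref{t:UniLogBound} that excludes this. In (ii) the delicate point is the intermediate-value argument handling an active upper constraint $\re k_0^2=E_2$; it relies on continuity of $\fr\mapsto\dr_{\min}(\fr;\F_{\overline{\RR}})$ on each achievable interval, which I would justify from the closedness in Theorem \ref{t:SAclosed} (giving lower semicontinuity) together with the local perturbation description (\ref{e:k_asy}).
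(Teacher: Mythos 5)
Your part (i) is essentially the paper's argument (direct method: closedness of $\Si_\res[\F_{\overline{\RR}}]$ from Theorem \ref{t:SAclosed} plus coercivity from the uniform bound (\ref{e:dr>}) to prevent a minimizing sequence from escaping along $\RR$), except for the nonemptiness of the constrained feasible set. The paper gets nonemptiness cleanly from the explicit two-center example of \cite[Section II.1.1]{AGHH12}, which produces for every $E\ge 0$ a tuple $\al'\in\overline{\RR}^N$ and a resonance with $\re k^2=E$. Your substitute argument — that the energy along the minimal-decay curve sweeps through $[E_1,E_2]$ — misquotes Theorem \ref{t:UniLogBound}: that theorem is a \emph{lower} bound on $\dr_{\min}$, so it cannot show that $\fr^2-\dr_{\min}(\fr)^2$ grows without bound (for that you need an \emph{upper} bound on $\dr_{\min}$, which again comes from exhibiting explicit resonances), and it additionally presupposes continuity of $\dr_{\min}$ to apply an intermediate value argument.

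For part (ii) you take a genuinely different route, and it has a real gap. The paper does \emph{not} reduce to Theorem \ref{t:pa_in_exi_R}(i); it proves a strengthened version, Theorem \ref{t:ConnComp}, asserting that condition (\ref{e:pa_in_exi}) holds for every resonance on the boundary of the connected component $\Om_+$ of $\CC\setminus\Si_\res[\F_{\overline{\RR}}]$ containing $\CC_+$, and observes that a constrained width minimizer lies on $\Bd\Om_+$; this needs no regularity of $\dr_{\min}$ whatsoever. Your reduction instead requires showing that the width minimizer is of minimal decay for its own frequency, and the intermediate value step there hinges on continuity of $\fr\mapsto\dr_{\min}(\fr;\F_{\overline{\RR}})$ on achievable intervals. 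Closedness (Theorem \ref{t:SAclosed}) gives only lower semicontinuity; upper semicontinuity would require moving the extremal resonance \emph{horizontally} by feasible perturbations of $a$, and it is precisely at extremal points that this can fail: by Lemma \ref{l:intSiD} the achievable first-order displacements of $k$ are confined to a closed half-plane, which may degenerate to a vertical ray or to $\{0\}$ (all first minors vanishing, cf. Lemma \ref{l:pa=0}), so the sketch via (\ref{e:k_asy}) does not close this hole. Unless you either prove continuity of $\dr_{\min}$ or switch to the $\Bd\Om_+$ formulation of Theorem \ref{t:ConnComp}, part (ii) remains unproved.
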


\begin{proof}[Proof of statement (i)] It follows from Example \ref{ex:1center} and 
 the example in \cite[Section II.1.1]{AGHH12} that for any $E \ge 0$  
and any two-point set $Y'= \{ y'_1 , y'_2 \}$ there exist
a tuple $\al' \in \overline{\RR}^2$ and a resonance $k \in \Si_\res (\al',Y')$ such that $E=\re k^2$ (see also Section \ref{s:Tetrahedron}). So, when $N \ge 2$, the existence of minimizer follows in the case $E_2<+\infty$ from Theorem \ref{t:SAclosed}, 
and in the case $E_2 = +\infty$ from Theorem \ref{t:SAclosed} and the uniform bound (\ref{e:dr>}).
\end{proof}

The statement (ii) of Corollary \ref{c:Width} follows immediately from the following strengthened version of Theorem \ref{t:pa_in_exi_R} (i).

\begin{thm} \label{t:ConnComp} Denote by $\Om_+$ the path-connected components of the open set $\CC \setminus \Si_\res [\F_{\overline{\RR}}]$ that contain $\CC_+$. If $k \in \Bd \Om_+$ and $a \in \overline{\RR}^N$ are such that $k \in \Si_\res (a,Y)$, then there exists $\xi \in [-\pi, \pi)$ such that (\ref{e:pa_in_exi}) holds. 
\end{thm}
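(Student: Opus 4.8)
The plan is to reduce Theorem~\ref{t:ConnComp} to the necessary condition already proved in Theorem~\ref{t:pa_in_exi}(i), by showing that every $k \in \Bd \Om_+$ generated by some $a \in \overline{\RR}^N$ in fact lies on $\Bd \Si [\F_{\overline{\CC}_-}]$. Since $\Si_\res [\F_{\overline{\RR}}] \subset \CC_- \cup \RR$ is closed (Theorem~\ref{t:SAclosed}), the open upper half-plane $\CC_+$ is resonance-free and therefore lies in a single component $\Om_+$, which by (\ref{e:SiSym}) is symmetric with respect to $\ii \RR$. Being a component of the complement of a closed set, $\Om_+$ is open and $\Bd \Om_+ \subset \Si_\res [\F_{\overline{\RR}}]$; combined with (\ref{e:BdSsubsetBdS}) this gives $k \in \Si [\F_{\overline{\CC}_-}] \cap (\CC_- \cup \RR)$, so the whole task is to verify $k \notin \Int \Si [\F_{\overline{\CC}_-}]$, after which Theorem~\ref{t:pa_in_exi}(i) applies with $a \in \overline{\RR}^N \subset \overline{\CC}_-^N$ and yields (\ref{e:pa_in_exi}).

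First I would reduce to $\fr := \re k \ge 0$. For $a \in \overline{\RR}^N$, identity (\ref{e:Ga*}) gives $\det \Ga_{a,Y} (-\overline{z}) = \overline{\det \Ga_{a,Y} (z)}$, and the same conjugation holds for each first minor $\pa_{\beta_j} D (b; \cdot)$. Hence $k \mapsto -\overline{k}$ carries resonances of $H_a$ to resonances of the \emph{same} $H_a$, preserves $\Bd \Om_+$ by the symmetry of $\Om_+$, and sends a ray $e^{\ii \xi} [0,+\infty)$ to $e^{-\ii \xi} [0,+\infty)$; so it is enough to treat $\fr \ge 0$ and then undo the reflection.

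The heart of the matter is to compare $\Om_+$ with $\Si [\F_{\overline{\CC}_-}]$ inside the open right half-plane $R := \{ \re z > 0 \}$. By (\ref{e:BdSsubsetBdS}) one has $\Bd \Si [\F_{\overline{\CC}_-}] \cap R \subset \Si_\res [\F_{\overline{\RR}}] \cap R$, a set disjoint from $\Om_+$; thus $\Om_+ \cap R$ meets $\Bd \Si [\F_{\overline{\CC}_-}]$ nowhere, and the two relatively open sets $\Om_+ \cap R \cap (\CC \setminus \Si [\F_{\overline{\CC}_-}])$ and $\Om_+ \cap R \cap \Int \Si [\F_{\overline{\CC}_-}]$ partition $\Om_+ \cap R$ into clopen pieces. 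Because $\CC_{\mathrm{I}} \subset \Om_+ \cap R$ and $\CC_{\mathrm{I}} \cap \Si [\F_{\overline{\CC}_-}] = \emptyset$, the piece containing $\CC_{\mathrm{I}}$ is the resonance-free one; so once $\Om_+ \cap R$ is shown to be connected, all of it avoids $\Si [\F_{\overline{\CC}_-}]$. Granting this, a sequence in $\Om_+$ converging to $k$ eventually lies in $R$ (as $\fr > 0$) and hence in $\CC \setminus \Si [\F_{\overline{\CC}_-}]$, giving $k \in \Bd \Si [\F_{\overline{\CC}_-}]$; the case $\fr = 0$ follows from the $\ii \RR$-symmetry of $\Om_+$ together with $\ii \RR_- \subset \Si_\res [\F_{\overline{\RR}}]$, which forces every neighborhood of such a $k$ to meet $\Om_+ \cap R$.

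The main obstacle is precisely the connectedness of $\Om_+ \cap R$: it would fail if $\Om_+$ crossed $\ii \RR$ inside $\CC_-$ and produced a fourth-quadrant pocket joined to the rest of $\Om_+$ only through the left half-plane, since by the clopen argument such a pocket would lie in $\Int \Si [\F_{\overline{\CC}_-}]$ and could carry $k$. To exclude this I would establish $\Om_+ \cap \ii \RR = \ii \RR_+$, i.e. that the entire nonpositive imaginary axis is resonant over $\F_{\overline{\RR}}$. A direct computation with the two-center configuration of \cite[Section~II.1.1]{AGHH12} (sending all but two parameters to $\infty$) shows that at $k = -\ii t$ one has $\ii \varkappa \pm e^{\ii \varkappa} \in \RR$, so a single real strength can be chosen to annihilate $\det \Ga_{a,Y} (-\ii t)$ for every $t \ge 0$. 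Then $\Om_+$ can pass between the half-planes only through $\ii \RR_+ \subset \CC_+$, and any excursion of a path into $\{ \re z < 0 \}$ may be rerouted through the connected cap $\CC_{\mathrm{I}} \cup \ii \RR_+ \subset \Om_+$, which yields the connectedness of $\Om_+ \cap R$ and closes the argument.
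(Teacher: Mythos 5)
Your argument is correct and follows the same route the paper intends for Theorem~\ref{t:ConnComp}: reduce to Theorem~\ref{t:pa_in_exi}(i) by showing that $k \in \Bd \Om_+$ forces $k \in \Bd \Si [\F_{\overline{\CC}_-}]$, with negative frequencies handled via the reflection symmetry (\ref{e:SiSym}); your clopen/connectedness argument for $\Om_+ \cap R$, anchored at $\CC_{\mathrm{I}}$ and using $\Om_+ \cap \ii\RR = \ii\RR_+$, is a valid way to supply the details the paper compresses into ``follows the same lines as Theorem~\ref{t:pa_in_exi_R}(i)''. The only simplification available: the inclusion $\ii(-\infty,0] \subset \Si_\res [\F_{\overline{\RR}}]$ is already given by the one-center case of Example~\ref{ex:1center}(i) (all but one $a_j$ sent to $\infty$), so the two-center computation is unnecessary.
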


The proof of Theorem \ref{t:ConnComp} follows the same lines as that of Theorem \ref{t:pa_in_exi_R} (i).

\vspace{1ex}

\textbf{Symmetries and non-uniqueness of extremizers.} If there exists a unique operator $H$ that generates an extremal (in any sense) resonance, then $H$ preserves all the symmetries of  this optimization problem. 

This obvious principle can be illustrated by the tetrahedron equidistant case of Theorem \ref{t:EqLN=4} if we consider a  frequency $\fr \in \pm \bigcup_{l \in \NN} (  (2 l -1)\pi /L, 2 l  \pi / L)$ and  operators $H_a \in \F_{\overline{\RR}}$ of minimal decay for $f$. Indeed,  the tuple $a$ found in Lemma \ref{l:nmin=4} is the unique tuple of minimal decay for $f$.
%All the coordinates $a_j$ of this tuple are equal to each other, and so 
The corresponding optimal operator $H_a$ possesses all the symmetries of the symmetry group $T_d$ of a regular tetrahedron.  The corresponding resonance of minimal decay is simple (i.e., of multiplicity 1).

%In the case $\fr \in \pm \bigcup_{l = 0}^{\infty} (  \frac{2 l \pi}{L}, \frac{(2 l +1) \pi}{L})$, the situation is different since infinite family of generic optimizers $H_\al$ preserve only one of the symmetries.
% and some of optimizers the resonance of minimal decay is degenerate (i.e., of multiplicity %$\ge 2$).  
%Let us consider this effect in more details.

\begin{ex} \label{ex:Tetrahedron} 
In the case $\fr \in \pm \bigcup_{l = 0}^{\infty} (  \frac{2 l \pi}{L}, \frac{(2 l +1) \pi}{L})$ (with $N=4$ and $|y_j - y_{j'}| = L$ for all $j \neq j'$),  the situation is different since an infinite family of generic $H_\al$ of minimal decay preserves only one of the symmetries. Let us consider in more details operators $H_a$ that generate the resonance $k$ of minimal decay over $\F_{\overline{\RR}}$ for such $\fr$
% for a frequency 
%$\fr \in \pm \bigcup_{l = 0}^{\infty} (  \frac{2 l \pi}{L}, \frac{(2 l +1) \pi}{L})$ 
(this $k$ is calculated in Lemma \ref{l:nmin=2}).

It is easy to see that a 4-tuple $a \in \overline{\RR}^4$ is of minimal decay for $\fr$ if and only if 
\begin{equation} \label{e:DescrOptim_case_n=2}
\text{two of the parameters $a_1$, \dots, $a_4$ are equal  to
$\textstyle a_* := \frac{1}{4 \pi L} \ln \frac{L \fr}{\sin (L \fr)} - \frac{\fr}{4 \pi} \cot (L\fr)$.}
\end{equation}

Indeed,  in the case (\ref{e:DescrOptim_case_n=2}),
%from the proof of Lemma \ref{l:nmin=2}, 
one can see that at least two of the numbers $A_j$  satisfy $\ii\vka - A_j - e^{\ii \vka} = 0$. So (\ref{e:4A_D}) and Lemma \ref{l:LocDet} imply $k \in \Si_\res (a,Y)$.
On the other hand, assume that $a$ does not satisfy (\ref{e:DescrOptim_case_n=2}). Then (\ref{e:4Adet=0}), (\ref{e:4Adet=0_n=3}), and (\ref{e:nmin=2_A=A}) imply $\ii\vka - A_j - e^{\ii \vka} \neq 0$ for all $j=1,\dots,4$.
Applying the arguments of Lemmas \ref{l:nmin=4} - \ref{l:nmin=2} (i), it is not difficult to see that $n(a) \neq 4,3,2,1$, a contradiction.
\end{ex}

We see  that, in the case of vertices of a regular tetrahedron and  $\F=\F_{\overline{\RR}}$,
\begin{itemize}
\item[(a)] each operator $H_a$ of minimal decay has at least one of the symmetries (of the symmetry group) of $\F$,
\item[(b)] there exists one $H_a$ of minimal decay that possesses all the symmetries of  $\F$.
\end{itemize}
The question to what extent the above observations (a) and (b) remain true for other feasible sets $\F$ and other resonance optimization problems \cite{CZ95,CO96,KS08} seems to be natural. We would like to note that related questions often appear in numerical and engineering studies \cite{KS08,LJ13,NKT08}.

\begin{rem} It worth to note that an example of a 1-D resonance optimization problem that possesses two different optimizers generating the same resonance of minimal decay  has been constructed recently in \cite{KLV17_Im}.  This example involves the equation of an inhomogeneous string and uses essentially the specific effects for its resonances on $\ii \RR_-$. 
\end{rem}
 
\textbf{Multiple resonances of minimal decay.} In many reasonable settings generic resonances are simple \cite{DZ16}. Resonances of minimal decay are very specific ones. Section \ref{s:Tetrahedron} shows that they can be multiple (i.e., of multiplicity $\ge 2$).

\begin{ex} \label{ex:mult}
In the settings of Example \ref{ex:Tetrahedron}, formula (\ref{e:4A_D}) implies that the resonance of minimal decay $k$ for $\fr \in \pm \bigcup_{l = 0}^{\infty} (  \frac{2 l \pi}{L}, \frac{(2 l +1) \pi}{L})$ is
\item[(i)] of multiplicity $2$ for $H_a$ if and only if exactly 
three of parameters $a_1$, \dots, $a_4$ are equal to $a_*$; 
\item[(ii)] of multiplicity $3$ for $H_a$ exactly when $a_1=\dots=a_4 = a_*$.
\end{ex}

It seems that the above effect with existence of multiple resonances of minimal decay is new. 
The explicitly computed 1-D resonances of minimal decay  in \cite{Ka14,KLV17_Im} are simple. However, it was noticed in numerical optimization experiments of \cite{KS08} that, in the 2-D case with upper and lower constraints on the index of refraction, the gradient ascent iterative procedure stopped when it encountered a multiple resonance because it was not able to determine which resonance branch to follow. In our opinion the Schrödinger operators with a finite number of point interactions is a good choice of a model for the study of the phenomena behind this numerical difficulty.

\begin{rem} As it was pointed out by the referee, the optimization technique of this paper can be applied to other types of non-selfadjoint spectral problems, e.g., to resonances of non-compact quantum graphs with finitely many edges \cite{DEL10,KS99,KS03,PW11}.
\end{rem}

%\vspace{1ex}
%\noindent
%\textbf{Acknowledgements.} 

\noindent Sergio Albeverio\\
Institute for Applied Mathematics, Rheinische Friedrich-Wilhelms Universität Bonn,
and Hausdorff Center for Mathematics, Endenicher Allee 60,
D-53115 Bonn, Germany\\
E-mail: albeverio@iam.uni-bonn.de

\vspace{2ex}

\noindent Illya M. Karabash\\
 Humboldt Research Fellow at Mathematical Institute, Rheinische Friedrich-Wilhelms Universität Bonn,
 Endenicher Allee 60, D-53115 Bonn, Germany;\\
and \\
Institute of Applied Mathematics and Mechanics of NAS of Ukraine,
Dobrovolskogo st. 1, Slovyans'k 84100, Ukraine\\
E-mail:  i.m.karabash@gmail.com

\end{document}